\documentclass[11pt]{article}

\usepackage{amsmath}
\usepackage{amssymb}
\usepackage{amsthm}
\usepackage{enumerate}

\newcommand{\nor}{\makebox[1em]{$\not$\makebox[.8em]{$\perp$}}\,}

\def \forces{\vdash}

\def \nmodels {\mathop{\not\models}}
\def \nwor {\mathop{\not\perp}^w}
\def \wor  {\perp\!\!\!{^w}}

\def \tp {{\rm tp}}
\def \Sem {{\rm Sem}}

\def \cl {{\rm cl}}

\def \eps {{\mathcal E}}
\def \Aut {{\rm Aut}}
\def \Inv {{\rm Inv}}
\def \Lin {{\rm Lin}}

\def\Mon{{\mathfrak C}}

\def \strok {\!\upharpoonright\!}

 \newtheorem{thm}{Theorem}[section]
 \newtheorem{prop}[thm]{Proposition}
 \newtheorem{lem}[thm]{Lemma}
 \newtheorem{cor}[thm]{Corollary}

  \newtheorem{theo}{Theorem} 

\theoremstyle{definition}
 \newtheorem{exm}[thm]{Example}
 \newtheorem{defn}[thm]{Definition}
 \newtheorem{rmk}[thm]{Remark}
 \newtheorem{assu}[thm]{Assumption}
 \newtheorem{nota}[thm]{Notation}

\title{Asymmetric regular types}
\date{}
\author{Slavko Moconja
\\ Faculty of Mathematics, University of Belgrade, Serbia\and Predrag
Tanovi\'c
\\Mathematical Institute SANU, Belgrade, Serbia}

\usepackage[top=25mm, bottom=3cm, left=25mm, right=25mm]{geometry}

\begin{document}
\maketitle

\begin{abstract}We study  asymmetric regular types. If $\frak p$ is regular and $A$-asymmetric then there exists a strict order such that   Morley sequences in $\frak p$ over $A$ are strictly increasing (we   allow Morley sequences to be indexed by elements of a linear order). We prove that for all $M\supseteq A$  maximal Morley sequences in $\frak p$ over $A$ consisting of elements of $M$ have the same (linear) order type, denoted by $\Inv_{\frak p,A}(M)$, which does not depend on the particular choice of the order witnessing the asymmetric regularity.   In the countable case we determine all possibilities for $\Inv_{\frak p,A}(M)$:   either it can be any countable linear order, or in any $M\supseteq A$ it is a dense linear order (provided that it has at least two elements). Then we study   relationship between $\Inv_{\frak p,A}(M)$ and $\Inv_{\frak q,A}(M)$ when $\frak p$ and $\frak q$ are strongly regular, $A$-asymmetric, and such that   $\frak p_{\strok A}$ and $\frak q_{\strok A}$ are  not weakly orthogonal. We distinguish two kinds on non-orthogonality: bounded and unbounded. In the bounded case we prove that   $\Inv_{\frak p,A}(M)$ and $\Inv_{\frak q,A}(M)$   are  either isomorphic  or anti-isomorphic. In the unbounded case, $\Inv_{\frak p,A}(M)$ and $\Inv_{\frak q,A}(M)$ may have distinct cardinalities but  we prove that their Dedekind completions are either isomorphic or anti-isomorphic. We provide examples of all  four situations.
\end{abstract}

The concept of regularity for global types in an arbitrary first order theory was introduced in \cite{PT}. It was motivated by certain properties of regular (stationary) types in stable theories. The motivation behind the definition is the following: let $\Mon$ be  the monster model of a complete first-order theory $T$ and let  $\frak p$ be a global $A$-invariant type. $\frak p$ induces a division of all definable subsets: those defined by a formula belonging to $\frak p$ are considered to be  `small' subsets, the others are `large'. The division induces a naturally defined operation on the power set of $\Mon$:  
$\cl_\frak p^A(X)$ is the union of all small $AX$-definable subsets. The regularity of $\frak p$ over $A$ means that $\cl_\frak p^B$ is a closure operation on the power set of the locus of $\frak p_{\strok B}$ for all $B\supseteq A$ (this is explained in detail in Section \ref{X_regular}). 
It turns out that there are two essentially distinct kinds of regular types: symmetric and asymmetric. For types of symmetric kind $\cl_\frak p^B$ is   a pregeometry operation   carrying  a naturally defined notion of dimension. In the generically stable case they   share several nice  
properties of regular types in a stable theory; that kind was  investigated in more detail in \cite{Tgs}.  
In the asymmetric case some $\cl_\frak p^B$ is a proper closure operation (the exchange fails). The main consequence  established in \cite{PT} is the existence of a  definable partial order which orders Morley sequences increasingly.  In this paper we study asymmetric regular types in more detail. It  turns out that in most results we do not use the full strength of the original regularity  assumption. What  suffices  is   that the type is weakly regular over $A$ ($\cl_\frak p^A$  is a proper closure operation on the locus of $\frak p_{\strok A}$) and $A$-asymmetric; see Definitions \ref{Dweakly_regular} and \ref{Dassym}.   The main consequence of these assumptions from \cite{PT} is the existence of a definable partial order which orders Morley sequences increasingly. That fact is re-proved in the first part of the following theorem.

\begin{theo}\label{T_regular_implies_totdeg_uvod} Suppose that $\frak p$ is weakly regular and $A$-asymmetric.
 There is an $A$-definable partial order such that any Morley sequence in $p$ over $A$ is strictly increasing.
 For any   $X\subseteq \Mon$ the order type of any maximal Morley sequence (in $p$ over $A$) consisting of elements of $X$   does not  depend   on the particular choice of the sequence and the ordering relation witnessing the asymmetric regularity. 
\end{theo}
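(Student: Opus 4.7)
For the first assertion I would follow the strategy of \cite{PT}. $A$-asymmetry supplies an $L(A)$-formula $\phi(x,y)$ which lies in the Morley $2$-type of $\frak p$ over $A$ while $\phi(y,x)$ does not. Setting $\psi(x,y) := \phi(x,y) \wedge \neg\phi(y,x)$, every $2$-element Morley sequence $(a,b)$ satisfies $\psi(a,b) \wedge \neg \psi(b,a)$. To upgrade $\psi$ to an $A$-definable partial order $<^*$ under which every Morley sequence is strictly increasing, one intersects $\psi$ with appropriate finite iterates and verifies transitivity on the locus of $\frak p \strok A$ using weak regularity, as in \cite{PT}.

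For the second assertion, fix two maximal Morley sequences $\bar a = (a_i)_{i\in I}$ and $\bar b = (b_j)_{j\in J}$ in $\frak p$ over $A$ with entries in $X$, possibly relative to different witnessing orders $<^*_1$ and $<^*_2$. The plan is to build an order-isomorphism $f : I \to J$ by matching each $a_i$ to its unique position in $\bar b$. For each $i \in I$ the element $a_i$ realizes $\frak p\strok A$; if $a_i$ were not in $\cl_\frak p^A(A\bar b)$, then by weak regularity $a_i$ would be independent of $\bar b$ and could be inserted into $\bar b$ at its $<^*_2$-cut, contradicting maximality of $\bar b$. So $a_i\in \cl_\frak p^A(A\bar b)$, and I would argue via finite character of the closure that $a_i$ is $\cl_\frak p^A$-equivalent to a unique entry $b_{j(i)}$; set $f(i) := j(i)$. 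The symmetric construction, starting from $\bar b$, produces the inverse map. Order preservation is then automatic: $i<i'$ in $I$ means $(a_i,a_{i'})$ is a Morley $2$-sequence, so $a_i <^*_1 a_{i'}$; since $a_i$ is $\cl_\frak p^A$-equivalent to $b_{f(i)}$ and $a_{i'}$ to $b_{f(i')}$, the pair $(b_{f(i)},b_{f(i')})$ also forms a Morley $2$-sequence and is $<^*_2$-increasing, giving $f(i) < f(i')$ in $J$.

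The main obstacle is the localization lemma: showing that every realization of $\frak p\strok A$ inside $\cl_\frak p^A(A\bar b)$ is $\cl_\frak p^A$-equivalent to a \emph{unique} entry of $\bar b$. Existence should follow from finite character of $\cl_\frak p^A$ combined with maximality of $\bar b$ (if no single $b_j$ captures $a_i$, then $a_i$ would contribute a fresh coordinate that could be inserted, contradicting maximality). Uniqueness uses that distinct entries of a Morley sequence are pairwise $\cl_\frak p^A$-independent and hence lie in distinct $\cl_\frak p^A$-equivalence classes. Once this is established, independence of the resulting order type from the particular $<^*$ is automatic: the matching $f$ and its order structure are defined purely in terms of $\cl_\frak p^A$-equivalence classes, which depend only on $\frak p$ and $A$, while any witnessing $<^*$ merely reflects the Morley-order on the chosen representatives.
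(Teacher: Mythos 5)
Your plan for the second assertion is to directly build an order-isomorphism between two maximal Morley sequences in $X$ by matching each $a_i$ to the unique $b_j$ with $\cl_\frak p^A(a_i)=\cl_\frak p^A(b_j)$. This is a genuinely different route from the paper, which first proves that $\cl_\frak p^A$ is a \emph{totally degenerated} closure operation (Theorem \ref{T_cl_implies_order}(ii)) and then reads off the invariance of the order type from the abstract combinatorics of totally degenerated closure systems (Proposition \ref{P_totdeg_cl_iplies_invariant}(iii)): a sequence is $\cl$-free iff the closures $\cl(a_i)$ form a strictly $\subsetneq$-increasing chain, and maximality forces that chain to enumerate all the closures $\cl(x)$ for $x\models\frak p_{\strok A}$ in $X$, so its order type is fixed in advance.

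The gap in your argument is precisely at the place you flag as the ``main obstacle'': the existence half of the localization lemma. Finite character only yields $a_i\in\cl_\frak p^A(b_{j_1},\dots,b_{j_k})$ for some \emph{finite} subtuple of $\bar b$; to pass to a \emph{single} $b_{j(i)}$ with $\cl_\frak p^A(a_i)=\cl_\frak p^A(b_{j(i)})$, and to know that the alternative (being strictly between two classes) lets you insert $a_i$ into $\bar b$, you need that the $\eps_\frak p$-classes of all realizations of $\frak p_{\strok A}$ are \emph{linearly} ordered by $\cl_\frak p^A(x)\subsetneq\cl_\frak p^A(y)$. That linearity is exactly total degeneracy, and it is the crux — finite character is the wrong thing to invoke. (Also, the phrase ``inserted at its $<^*_2$-cut'' is not quite right, since $<^*_2$ is only a partial order; you need comparability of the $\eps$-classes, not of the points.) The paper establishes total degeneracy using the order from part one: if $d$ is a maximal element of a finite $B\subset\frak p_{\strok A}(N)$ then $d<x\in \frak p(x)$ and no element of $B$ satisfies it, whence $B\subseteq\cl_\frak p^A(d)$. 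Alternatively one can argue directly from weak regularity plus asymmetry that for distinct $\eps$-classes at least one of $(a,b),(b,a)$ is Morley, giving linearity; either way, without this step your existence argument does not close. The uniqueness half and the order-preservation of $f$ are fine once linearity is in hand, and your treatment of the first assertion by reference to \cite{PT} is acceptable, though the paper's actual construction is via the relation $\phi(N,a)\subseteq\phi(N,b)$ (using compactness to restrict to a suitable $\theta(N)$), not by iterating $\psi$.
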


Thus instead of having a dimension of a type in a model, we have another invariant: the linear order type of a maximal Morley sequence, denoted by $\Inv_{\frak p,A}(M)$. 
The  partial order $\leq$ mentioned  in the theorem is not uniquely determined, and all of them, when considered on $\frak p_{\strok A}(\Mon)$, have  a strong combinatorial property. Essentially `$\{x,y\}$ cannot be ordered into a Morley sequence in $\frak p$ over $A$' 
is an equivalence relation   on the locus, and its classes are linearly ordered  by  $\leq$  in the same way for any choice of $\leq$.  The equivalence relation  is relatively $\bigvee$-definable over $A$ on the locus of $\frak p_{\strok A}$; the class of $a\models \frak p_{\strok A}$ is denoted by $\eps(a)$. For any $M$ the naturally induced order on all the classes meeting $M$ is also linear and  its order type is  $\Inv_{\frak p,A}(M)$. We investigate possibilities for $\Inv_{\frak p,A}(M)$. 
Two properties of the regular type  are relevant for. The first is  simplicity, i.e the relative definability of `$(x,y)$ is a Morley sequence in $\frak p$ over $A$'  which is essentially equivalent to $\Inv_{\frak p,A}(M)$ being represented by a type-definable object in $M^{eq}$. The second property  is convexity over $A$: whether the order witnessing the $A$-asymmetric regularity can be chosen such that the locus of $\frak p_{\strok A}$ is a convex subset of $\Mon$.  We prove:

\begin{theo}\label{T_omit_uvod}
Suppose that $\frak p$ is weakly regular and $A$-asymmetric. 
\begin{enumerate} [(i)]
\item If $p$ is simple and convex over $A$, and  $\Inv_{\frak p,A}(M)$  has at least two elements, then $\Inv_{\frak p,A}(M)$ is a dense linear order (with or without endpoints).
\item If both $T$ and $A$ are countable and  $p$ is simple  and non-convex over $A$ then  $\Inv_{\frak p,A}(M)$ can be any countable linear order. 
\item If both $T$ and $A$ are countable and $p$ is not simple over $A$  then  $\Inv_{\frak p,A}(M)$ can be any  countable linear order. 
\end{enumerate}
\end{theo}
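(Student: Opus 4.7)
The three parts have distinct flavours: (i) is a density result, while (ii) and (iii) assert that any countable linear order arises as $\Inv_{\frak p,A}(M)$ for some $M$.

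\textbf{Part (i).} By simplicity there is a formula $\phi(x,y)$ over $A$ that, on the locus of $\frak p_{\strok A}$, relatively defines ``$(x,y)$ is a Morley sequence in $\frak p$ over $A$''. Suppose $a,b\in M$ lie in distinct $\eps$-classes with $\eps(a)<\eps(b)$, so $\phi(a,b)$ holds; density reduces to finding $c\in M$ with $\eps(a)<\eps(c)<\eps(b)$. In $\Mon$ take a Morley sequence $(c_q)_{q\in\mathbb{Q}}$ in $\frak p$ over $A$, allowed by Theorem \ref{T_regular_implies_totdeg_uvod}; for rationals $q_0<r<q_1$ the triple $(c_{q_0},c_r,c_{q_1})$ is Morley, and $(c_{q_0},c_{q_1})$ has the same type over $A$ as $(a,b)$. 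An $A$-automorphism of $\Mon$ sends $c_r$ to some $c_0\in\Mon$ with $(a,c_0,b)$ Morley, so $c_0$ lies in the locus and satisfies $\phi(a,c_0)\wedge\phi(c_0,b)$. By simplicity and convexity over $A$, ``there is a locus element $x$ with $(a,x,b)$ Morley'' is expressed by a single formula over $Aab$, so $M\preceq\Mon$ transfers the witness to some $c\in M$; the two $\phi$-relations place $\eps(c)$ strictly between $\eps(a)$ and $\eps(b)$, establishing density.

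\textbf{Parts (ii) and (iii).} Given a countable $L$, the shared plan is to build a countable $M\supseteq A$ containing a Morley sequence $(a_i)_{i\in L}$ in $\frak p$ over $A$ (existing by Theorem \ref{T_regular_implies_totdeg_uvod}) and meeting no further $\eps$-classes. For each gap $G$ of $L$ this amounts to preventing $M$ from realizing the partial type $\pi_G(x)$ expressing ``$x$ is in the locus, $\phi(a_i,x)$ for every $i$ below $G$, and $\phi(x,a_j)$ for every $j$ above $G$''. In case (iii), non-simplicity makes $\pi_G$ a proper partial type, and I expect it to be non-principal over any finite parameter set: otherwise, combined with indiscernibility of the sequence, isolation would yield a formula defining the Morley relation on a neighbourhood of the locus, contradicting non-simplicity. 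Countability of $T$ and $A$ then lets the classical omitting-types theorem produce a countable $M$ containing $(a_i)_{i\in L}$ and omitting every $\pi_G$ simultaneously. In case (ii), $\pi_G$ is a single formula, so omitting types is unavailable; instead non-convexity over $A$ supplies non-locus elements sitting $\leq$-between locus elements. The plan is to insert such ``fillers'' into every gap of $L$ and run a back-and-forth construction over $L$ producing a countable model in which the only realizations of each $\pi_G$ are fillers, hence outside the locus and contributing no new $\eps$-class.

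\textbf{Main obstacle.} Part (i) is essentially routine once a $\mathbb{Q}$-indexed Morley sequence is available, the only subtle point being the transfer from $\Mon$ to $M$, which is handled by the explicit definability of ``being Morley'' and the convexity of the locus. In (iii) the main technical ingredient is the non-principality of $\pi_G$; I expect a uniform structural proof that extracts a putative defining formula from any isolating formula. The genuinely delicate case is (ii): one must confirm that non-convexity supplies enough filler witnesses in \emph{every} gap of an arbitrary countable $L$ \emph{and} that inserting them into the back-and-forth construction does not inadvertently force additional locus elements into those gaps. This is where I expect most of the work to lie.
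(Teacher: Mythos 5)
Your part (i) is essentially the paper's argument (Proposition \ref{P_conv_simple_invariants}): use the relative definability of $\eps_\frak p$ together with convexity to write a single formula over $Ab_1b_2$ asserting the existence of an intermediate element, and pull the witness into $M$ by elementarity. That is fine.

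For parts (ii) and (iii) there are genuine gaps. In (iii) you propose to omit, for each gap $G$ of $L$, the type $\pi_G(x)$. But a countable linear order can have uncountably many gaps (take $L=\mathbb Q$: its gaps are in bijection with the irrationals), so the classical Omitting Types Theorem cannot be invoked to omit all the $\pi_G$ simultaneously. The paper sidesteps this by working with a \emph{single} partial type
$\Sigma(x)=p(x)\cup\bigcup_{i\in I}\,x\notin\eps_\frak p(a_i)$
(this is indeed type-definable since each $\eps_\frak p(a_i)$ is $\bigvee$-definable), whose realizations are exactly the realizations of all the $\pi_G$ combined; one then only needs to omit $\Sigma$. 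The non-principality-style argument you anticipate then becomes concrete: assuming some consistent $\psi(x,\bar a)$ implies $\Sigma$, Lemma \ref{non_simple_scl} produces the strictly increasing chain $(\sigma_n)$ of strongly $\frak p$-bounded formulas witnessing non-simplicity, and a compactness argument using that chain yields the contradiction.

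In (ii) your premise that ``$\pi_G$ is a single formula, so omitting types is unavailable'' is incorrect: even if $\eps_\frak p$ is relatively definable, the type still contains the (in general infinite) type $p(x)$, and also infinitely many clauses from the gap, so $\Sigma$ is still a non-trivial partial type. The paper's proof (Proposition \ref{T_simple_nonconvex}) runs the \emph{same} omitting-types strategy as in (iii): one again assumes $\psi(x,\bar a)\vdash\Sigma(x)$, but the contradiction is obtained differently. Using the relatively defining formula $\sigma_\frak p$ one produces, by compactness, a formula $\phi'(x)\in p(x)$ such that restricting the witnessing order to $\phi'(\Mon)$ makes the locus of $p$ convex, contradicting non-convexity. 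So simplicity versus non-simplicity is used only in how the contradiction is extracted, not in whether omitting types applies. Your proposed back-and-forth construction with ``filler'' elements is not developed enough to assess; as written it does not address how to prevent the construction from accidentally adding locus elements into gaps, which you yourself flag as the hard point, and it is replaced in the paper by a cleaner route that you should adopt.
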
 

Next we study general properties of orthogonality of regular types. We prove:

\begin{theo}\label{T_asym_orth_uvod} 
A regular asymmetric type is orthogonal to any invariant symmetric type. 
$\nor$ is an equivalence relation on the set of asymmetric, regular types. 
\end{theo}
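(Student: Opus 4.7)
I would argue by contradiction. Assume $\frak p$ is $A$-asymmetric and regular, $\frak q$ is symmetric and invariant, and $\frak p \nor \frak q$. After enlarging the base so that both types are invariant over the same set $A$ and non-orthogonal there, I fix witnesses $a \models \frak p_{\strok A}$ and $b \models \frak q_{\strok A}$ with $a \dep_A b$. I then pick a linear order $I$ admitting an order-reversing automorphism (say $\mathbb Z$) and build a Morley sequence $(b_i)_{i\in I}$ in $\frak q$ over $A$. Because $\frak q$ is symmetric this sequence is totally indiscernible, so reversing the index produces another Morley sequence in $\frak q$. Using automorphisms I copy $a$ along the $b_i$'s to realizations $a_i$ with $\tp(a_i b_i/A)=\tp(ab/A)$; regularity of $\frak p$ should then allow me to extract a Morley sequence in $\frak p$ over $A$ from the $a_i$, whose internal order is faithfully dictated by the index $I$. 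By Theorem \ref{T_regular_implies_totdeg_uvod} this sequence is strictly increasing in some $A$-definable partial order $\leq$. Applying the same construction to the reversed Morley sequence in $\frak q$ yields a Morley sequence in $\frak p$ on the same realizations but with the reversed order, contradicting strict $\leq$-increasingness.

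\textbf{Plan for the second assertion.} Reflexivity is immediate: two successive terms of a Morley sequence in $\frak p$ over its base are dependent, so $\frak p \nor \frak p$. Symmetry of $\nor$ is part of its definition for invariant types. For transitivity, let $\frak p, \frak q, \frak r$ be asymmetric regular with $\frak p \nor \frak q$ and $\frak q \nor \frak r$. Passing to a common base $B$ over which all three types are invariant and both non-orthogonalities are witnessed, I pick $a \models \frak p_{\strok B}$, $b, b' \models \frak q_{\strok B}$, and $c \models \frak r_{\strok B}$ with $a \dep_B b$ and $b' \dep_B c$. By regularity of $\frak q$ over $B$, both $b$ and $b'$ lie in $\frak q$-closures of small parameter sets, and by an automorphism of $\Mon$ fixing $B$ I can arrange $b = b'$. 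Then $a \dep_B b$ together with $b \dep_B c$ and regularity of $\frak q$ should yield $a \dep_B c$, establishing $\frak p \nor \frak r$. Part (1) is used implicitly here to rule out that any symmetric type interferes with this transfer.

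\textbf{Main obstacle.} The chief technical step is the coherent extraction in part (1): promoting the pointwise dependences $a_i \dep_A b_i$ into a bona fide Morley sequence in $\frak p$ whose order is rigidly tied to that of $(b_i)$. This is where the order-theoretic rigidity captured by the equivalence classes $\eps(a)$ described after Theorem \ref{T_regular_implies_totdeg_uvod} must enter: those classes encode exactly the freedom one has in ordering $\frak p$-realizations, so reversing the $\frak q$-index cannot be absorbed inside a single $\eps$-class and must produce a genuinely $\leq$-decreasing $\frak p$-Morley sequence, yielding the required contradiction. The same kind of bookkeeping via $\eps$-classes is what I expect to underlie the coincidence-of-closures argument in part (2).
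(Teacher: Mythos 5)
Your overall intuition is sound---asymmetry of $\frak p$ should force asymmetry of anything non-orthogonal to it, and the order-theoretic rigidity of $\eps$-classes is the right lever---but as written both halves have real gaps that the paper's machinery is specifically designed to fill.

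For the first assertion, the step you flag as the "main obstacle" is indeed where the argument breaks down, and it is not a matter of bookkeeping. Choosing $a_i$ with $\tp(a_ib_i/A)=\tp(ab/A)$ gives you a family of realizations of $\frak p_{\strok A}$, but there is no reason these form a Morley sequence in $\frak p$ at all, let alone one whose $\leq$-order agrees with the index. The $a_i$'s could all lie in a single $\eps_\frak p$-class, or could be scattered arbitrarily; the automorphisms you use to construct them do not cohere. The paper avoids choosing any $a_i$'s: for each $\bar b$ it considers the canonical set $D_{\frak p}(\bar b)=\{a\models p: a\nmodels \frak p_{\strok A\bar b}\}$ (Definition \ref{D_I(b)_D(b)}) and proves (Lemma \ref{Ltotal}) that these sets are totally ordered by inclusion. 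Lemma \ref{LDworsas} then shows that for a $\frak q$-Morley pair $(\bar b,\bar b')$ the inclusion $D_\frak p(\bar b)\subsetneq D_\frak p(\bar b')$ or its reverse holds strictly, depending on whether $\frak p\otimes\frak q\ne\frak q\otimes\frak p$ or not; either way the two sets differ, and since $D_\frak p$ is an $A$-invariant assignment this forces $\tp(\bar b,\bar b'/A)\ne\tp(\bar b',\bar b/A)$, i.e., $\frak q$ is asymmetric. This is the clean way to make your "reversal cannot be absorbed" idea rigorous.

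For transitivity, your argument reduces to asserting that $a\dep_B b$ and $b\dep_B c$ with $\frak q$ regular yields $a\dep_B c$. That assertion is not a consequence of regularity in any direct way---it is exactly the "weight one with respect to $\nwor$" property, and it is the substantive content of Proposition \ref{prop_nwor_transitivity}. (The identification $b=b'$ by an automorphism is fine, but it only relocates the problem: you then still need to know that dependence on $b$ transfers across.) The paper proves the proposition by exhibiting two distinct completions of $q(\bar y)\cup r(\bar z)$, distinguished by whether $D_\frak q(\bar y)\subsetneq D_\frak q(\bar z)$ or the reverse holds, again using the total ordering of the $D$-sets and the fact that $D_\frak q(\bar b) < I_\frak q(\bar b)$ (Lemma \ref{Ld<i}). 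Your remark that "Part (1) is used implicitly here" is also off: part (1) is used only to ensure each type in the chain is asymmetric, but does not by itself supply the weight-one transfer. In short, your plan correctly identifies where the difficulty lies, but the crucial steps in both parts are precisely the lemmas about $D_{\frak p}(\cdot)$ and their total ordering under inclusion, which your proposal neither proves nor replaces by something equivalent.
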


Now, assume that $\frak p,\frak q$ are   regular, $A$-asymmetric types and that the corresponding restrictions are $\nwor$. It is natural to determine the relationship between $\Inv_{\frak p,A}(M)$ and $\Inv_{\frak q,A}(M)$. In general, there may be no connection between them: in  Example \ref{Ex_nonconvex_1} one of them is empty and the other any dense linear order chosen in advance.
 But assuming  in addition that the types are strongly regular, or at least convex in some cases, we get a strong connection.  There are two kinds of $\nwor$ which we call bounded and unbounded. In the bounded case there is an $A$-invariant bijection between  $\eps_{\frak p}$- and $\eps_{\frak q}$-classes, in which case $\Inv_{\frak p,A}(M)$ and $\Inv_{\frak q,A}(M)$ are either isomorphic or anti-isomorphic. In the unbounded case, which may happen only if both types are simple over $A$,  
Dedekind completions of $\Inv_{\frak p,A}(M)$ and $\Inv_{\frak q,A}(M)$ are either isomorphic or anti-isomorphic. Whether an isomorphism or an anti-isomorphism is in question in both cases depends only on  whether $\frak p$ and $\frak q$ commute or not. Since $\Inv_{A,\frak p}(M)$ may be empty, we adopt  the following convention: if $\Inv_{A,\frak p}(M)=\emptyset$ then   its Dedekind completion is a one-element order.

\begin{theo}\label{T_nor_uvod}
Suppose that $\frak p$ and $\frak q$ are regular, convex and $A$-asymmetric, and that $\frak p_{\strok A}\nwor\frak q_{\strok A}$. \begin{enumerate}[(i)]\item  Suppose that $\frak p_{\strok A}\nwor\frak q_{\strok A}$ is of bounded type.  
\begin{enumerate}[1)]
\item If $(\frak p\otimes\frak q)_{\strok A}\neq(\frak q\otimes\frak p)_{\strok A}$ then   $ \Inv_{A,\frak p}(M) $ and $\Inv_{A,\frak q}(M) $ are isomorphic.

\item If $(\frak p\otimes\frak q)_{\strok A}=(\frak q\otimes\frak p)_{\strok A}$ then   $ \Inv_{A,\frak p}(M) $ and $\Inv_{A,\frak q}(M) $ are anti-isomorphic.
\end{enumerate}

\item  Suppose that  $\mathfrak p,\frak q$ are  strongly regular, that $\frak p_{\strok A}\nwor\frak q_{\strok A}$ is of unbounded type. Then both $\frak p$ and $\frak q$ are simple over $A$. We have two cases:  
\begin{enumerate}[1)]
\item If $(\frak p\otimes\frak q)_{\strok A}\neq(\frak q\otimes\frak p)_{\strok A}$ then the Dedekind completions of   $\Inv_{A,\frak p}(M)$ and $ \Inv_{A,\frak q}(M)$ are isomorphic.

\item If $(\frak p\otimes\frak q)_{\strok A}=(\frak q\otimes\frak p)_{\strok A}$ then the Dedekind completions of   $\Inv_{A,\frak p}(M)$ and $ \Inv_{A,\frak q}(M)$ are anti-isomorphic.
\end{enumerate}
\end{enumerate}
\end{theo}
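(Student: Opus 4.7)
\textbf{Proof plan for Theorem \ref{T_nor_uvod}.} The proof naturally splits according to the two cases. In the bounded case (i), the first step is to exploit the hypothesis to produce an $A$-invariant bijection $\Phi$ between the $\eps_{\frak p}$-classes in the locus of $\frak p_{\strok A}$ and the $\eps_{\frak q}$-classes in the locus of $\frak q_{\strok A}$; the very definition of bounded non-orthogonality should deliver such a $\Phi$. Convexity of $\frak p$ and $\frak q$ over $A$ is used here to identify the order on classes with the order inherited from a definable partial order witnessing asymmetric regularity. I would then show that $\Phi$ restricts to a bijection between the classes meeting $M$; the $A$-invariance of $\Phi$ together with $M\supseteq A$ being a model (hence realizing all types over $A$ that it needs to) forces the image of an $M$-meeting class to meet $M$.

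Next, transport the linear order of the $\eps_{\frak q}$-classes via $\Phi^{-1}$ to the $\eps_{\frak p}$-classes; call this transported order $<^{\Phi}$. The key claim is that $<^{\Phi}$ coincides globally with either the native order $<_{\frak p}$ on classes or its reverse. Because $\Phi$ is $A$-invariant, any two pairs of distinct $\eps_{\frak p}$-classes realizing the same type over $A$ are sent to pairs of distinct $\eps_{\frak q}$-classes with the same relative order; applying this to Morley pairs in $\frak p$ over $A$ and invoking the uniqueness (up to reversal) provided by Theorem \ref{T_regular_implies_totdeg_uvod} forces the transported order to be one of $<_{\frak p}$, $>_{\frak p}$.

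The final step in (i) is to identify which alternative occurs in terms of commutativity. If $a\models \frak p_{\strok A}$ and $b\models \frak q_{\strok Aa}$ with $b$ in $\Phi(\eps_{\frak p}(a))$, then the statement $(\frak p\otimes\frak q)_{\strok A}=(\frak q\otimes\frak p)_{\strok A}$ allows an $A$-automorphism realizing the reverse order $(b,a)$ while keeping $b\in\Phi(\eps_{\frak p}(a))$, and one shows this can only be consistent with $<^\Phi$ reversing $<_{\frak p}$; conversely, non-commutativity forces orientation preservation. I expect this to be the main obstacle: it requires carefully comparing $\tp(a,b/A)$ and $\tp(b,a/A)$ and using $A$-invariance of $\eps_{\frak p}, \eps_{\frak q}, \Phi$ in concert.

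For part (ii), strong regularity is used to deduce simplicity: if some $\frak p$ were not simple over $A$, the non-definability of the Morley-pair relation together with unbounded non-orthogonality would contradict strong regularity (this is where the full strength of the strong regularity hypothesis, rather than weak regularity, enters). Without a bijection on classes one instead associates to each $a\models \frak p_{\strok A}$ the Dedekind cut in the $\eps_{\frak q}$-classes determined by the $\frak q$-realizations that lie, with respect to the $\frak q$-order, below some/any $b\models\frak q_{\strok Aa}$. Restricting to classes meeting $M$ gives a map $\Inv_{\frak p,A}(M)\to \Inv_{\frak q,A}(M)^{\mathrm{Ded}}$, which the same commutativity argument as in (i) shows is order-preserving or order-reversing accordingly, and swapping the roles of $\frak p$ and $\frak q$ yields the inverse; together these will produce a (anti-)isomorphism of completions. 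The anticipated difficulty here is verifying well-definedness of the cut on classes meeting $M$ and the density of the image in the completion, since the unbounded case is intrinsically many-to-one at the level of elements and becomes one-to-one only after passing to cuts.
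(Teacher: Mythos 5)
Your plan essentially reconstructs the paper's argument. In the bounded case you propose building an $A$-invariant bijection between $\eps_\frak p$- and $\eps_\frak q$-classes from the bounded witnessing formula, transporting the order, showing it must agree with $<_\frak p$ or its reverse by type-homogeneity, and reading off the orientation from commutativity; this is exactly what Lemmas \ref{Lbounded_unbounded}--\ref{L_boundedlemma_1} and Proposition \ref{P1} do. In the unbounded case you send $\eps_\frak p(a)$ to the cut determined by $D_\frak q(a)$, deduce simplicity from strong regularity, and extend via density of the image; this matches Lemmas \ref{L_bdd_rel_def}--\ref{LdenseF} and Proposition \ref{P2}.

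One small caveat: the phrase ``swapping the roles of $\frak p$ and $\frak q$ yields the inverse'' in part (ii) is not literally correct as stated, since $F$ maps $\Lin_A(\frak p)$ into $\mathcal D(\Lin_A(\frak q))$ while the role-swapped map lands in $\mathcal D(\Lin_A(\frak p))$, so they are not inverses of each other in the naive sense. The paper sidesteps this by proving directly that $F$ is strictly monotone with dense image in $\mathcal D(\Lin_A(\frak q))$ (after noting, via simplicity and convexity, that both $\Lin_A(\frak p)$ and $\Lin_A(\frak q)$ are dense linear orders), which suffices to extend $F$ to an (anti-)isomorphism of the completions; you do correctly flag density as the crux, so the gap is one of formulation rather than substance. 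A technical point you leave implicit but which is essential in the paper's argument: the relative definability of $D_\frak q(a)$ over $Aa$ (Lemma \ref{L_bdd_rel_def}), which follows from unboundedness and is the hinge on which both simplicity and the local analysis of $D_\frak q^M(a)$ turn.
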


  The paper is organized as follows: In the first part of Section \ref{X_closure} we study proper closure operators, namely those which are not pregeometry operators. Among them we label  totally degenerated ones, which are relevant for studying asymmetric weakly regular types. A totally degenerated closure operator naturally  induces an equivalence relation and a linear order on its classes.
 In the second part of the section we apply these results to $p$-closure operations $\cl^A_p$. 
 In Section \ref{X_regular} we study  general properties of asymmetric, regular and weakly regular types. There an instant application of the results from the second part of Section \ref{X_closure}  proves Theorem   \ref{T_regular_implies_totdeg_uvod}.  
  Section \ref{X_symm_closure} is technical and there we study the equivalence relation induced by $\eps_\frak p$. These results are applied in Section \ref{X_real_invariants}  where we prove Theorem \ref{T_omit_uvod}.  
  In Section \ref{X_orth}     we study general properties of orthogonality of regular types and  prove Theorem \ref{T_asym_orth_uvod}. Section \ref{X_strongreg} is devoted to the proof of Theorem \ref{T_nor_uvod}.

\bigskip
Throughout the paper we use mainly standard notation. Except in the first part of Section \ref{X_closure} where we deal with general closure systems, we will operate in a large saturated structure   $\Mon$.  $a,b,c,..., \bar a,\bar b,...$ denote elements and tuples of elements and $A,B,C,...$ denote small (of cardinality $<|\Mon|$) subsets of the universe. For any formula $\phi(\bar x)$ by $\phi(\Mon)$ we will denote the set of solutions of $\phi(\bar x)$ in 
$\Mon$ and similarly for partial types. $D\subset \Mon^n$ is {\em definable} if it is of the form $\phi(\Mon)$ for some  formula $\phi(\bar x)$. $D$ is {\em type-definable over $A$} if it is an intersection of  sets definable over $A$; 
$D$ is {\em $\bigvee$-definable over $A$} if it is a union of   sets  definable over $A$. If $D$ is type-definable over $A$ and $D'\subseteq D$ then we say that $D'$ is {\em relatively definable within $D$ over $A$} if it is of the form $\phi(\Mon)\cap D$ for some formula $\phi(\bar x)$ over $A$; similarly relative $\bigvee$-definability is defined.

 If $A\subset B$ and $p(\bar x)$ is a type over $B$ then by $p_{\strok A}(\bar x)$ we denote the restriction of $p(\bar x)$ to $A$. {\em Global types} are complete types over the monster and they will be denoted by $\frak p, \frak q,...$. A complete  type $p(\bar x)$ over $B$ {\em does not split over $A\subset B$} if $\varphi(\bar x,\bar b_1)\Leftrightarrow\varphi(\bar x, \bar b_2)\in p(\bar x)$ for all $\bar b_1\equiv\,\bar b_2\,(A)$. 
A global type is {\em $A$-invariant} if it does not split over $A$; a global type is {\em invariant} if it is $A$-invariant for some small set $A$. For an $A$-invariant type $\frak p$   by a {\em linear Morley sequence in $\frak p$ over $A$} we mean a sequence $(\bar a_i\,|\,i\in I)$ where $(I,<)$ is a linear order and $\bar a_i\models \frak p_{\strok A\bar a_{<i}}$ holds for all $i\in I$. Usually, we will omit the word `linear' and the meaning of $A$ will be clear from the context, so we will say simply that it is  a Morley sequence in $\frak p$.  Morley sequences in an $A$-invariant type are indiscernible over $A$.  

  The  {\em product} of invariant types  $\mathfrak
p(\bar x)\otimes\mathfrak q(\bar y)$ is defined by:  
if $\bar a\models \mathfrak p$ and $\bar b\models \mathfrak q_{\strok
\Mon\bar a}$ (in a larger universe) then $\mathfrak p(\bar x)\otimes\mathfrak q(\bar
y)=\tp_{\bar x,\bar y}(\bar a,\bar b/\Mon)$. This definition  reverses the order in the original one from \cite{HP}, the reasons for were explained in \cite{Tgs}.   
The product is associative  but, in general, not commutative. $\mathfrak p$ and $\mathfrak q$ {\em commute} if $\mathfrak p(\bar x)\otimes\mathfrak q(\bar
y)= \mathfrak q(\bar y)\otimes\mathfrak p(\bar
x)$. An invariant type $\frak p$ is {\em symmetric} if $\frak p(\bar x)\otimes\frak p(\bar y)=\frak p(\bar y)\otimes\frak p(\bar x)$; otherwise, it is {\em asymmetric}.  

 Complete types $p,q$ over $A$ are {\em weakly orthogonal}, denoted by $p\wor q$  if $p(\bar x)\cup q(\bar y)$ determines a complete type over $A$. Global types are {\em orthogonal} if they are weakly orthogonal.   Semi-isolation  is defined by: {\em $\bar a$ is semi-isolated by $\bar b$ over $A$}, denoted by $\bar a\in\Sem_A(\bar b)$,  if there is a formula $\varphi(\bar x,\bar y)\in\tp(\bar a,\bar b/A)$ such that $\varphi(\bar x,\bar b)\vdash \tp(\bar a/A)$. Semi-isolation over a fixed set  is transitive: $\bar a\in \Sem_A (\bar b)$ and $\bar b\in \Sem_A(\bar c)$ imply $\bar a\in \Sem_A(\bar c)$.

\section{Closure operations}\label{X_closure}

Suppose that $P$ is a non-empty set  and that $\cl$ is a (unary) operation on the power set of $P$. $\cl$ is a {\em closure operation on $P$} if it satisfies  (for all $X,Y\subseteq P$): 
	\begin{itemize}
	\item $X\subseteq Y$ implies $X\subseteq \cl(X)\subseteq \cl(Y)$;\hfill (Monotonicity)
	\item $\cl(X)=\bigcup\{\cl(X_0)\mid X_0\subseteq X,\,X_0\mbox{ is finite}\}$;\hfill (Finite character)
	\item $\cl(\cl(X))=\cl(X)$;\hfill (Transitivity)
	\end{itemize}
in which case we  say that $(P,\cl)$ is a {\em closure system}. 
A closure operation is a  {\em pregeometry operation} if, in addition, the exchange axiom is satisfied: for every $a,b\in P$, and $X\subseteq P$ $$b\in\cl(X,a)\smallsetminus\cl(X)\mbox{ implies }a\in\cl(X,b),$$ in which case $(P,\cl)$ is a {\em pregeometry}. Closure operations which do not satisfy the exchange  are called {\em proper closure operations}.

\smallskip  If $(P,\cl)$ is a closure system and $P'\subseteq P$ then by $\cl_{P'}(X)=X\cap \cl(P)$ a closure operation on $P'$ is defined, called {\em the restriction of $\cl$ to $P'$}. Usually we will not make distinction in notation between $\cl$ and $\cl_{P'}$; the meaning will be always clear from the context.

\smallskip Well known examples of pregeometries are: $(P,\cl)$ where $\cl$ is the identity; $(V,\cl)$, where $V$ is a vector space  and   $\cl(X)$ is the linear span of $X$; also $(F,\cl)$ is pregeometry, where $F$ is a  field   and   $\cl(X)$ is the set of all algebraic elements over the subfield generated by $X$. In this article we will be interested   in proper closure operations. Any such operation induces a partial order. Let  $a,b,A$ witness the failure of Exchange:  $a\in\cl(Ab)\smallsetminus \cl(A)$  and $b\notin\cl(Aa)$. Then $\cl(Aa)\subsetneq \cl(Ab)$, so the strict inclusion defines a non-trivial strict ordering. This is why   all of the following examples of proper closure systems are based on certain partial orders. 

\begin{exm}\label{Ebasic} Let $(P,\leq)$  be  a partial order and let $(L,<)$ be a linear order; assume that they are non-trivial. The following are examples of proper closure systems.
\begin{enumerate}[(1)]
\item   For $X\subseteq P$ define  $\cl(X)= \{t\in P\mid (\exists\, x\in X)\,t\leq x\}$.  

\item Let $(Q_p,\leq_p)$ be a family of partial orders indexed by $p\in P$. Consider the disjoint union $Q=\dot\bigcup_{p\in P}Q_p$ ordered by: \  $x\leq y$ \ iff \ $x\leq_p y$, for some $p\in P$, or $\pi(x)< \pi(y)$, \ where $\pi$ is  the projection map $\pi: \dot\bigcup_{p\in P}Q_p\longrightarrow P$. 
Let  \ $\cl(X)=\bigcup \{\,\pi^{-1}[\pi(y)]\,|\, (\exists x\in X)\pi(y)\leq \pi(x)\}$. Then $(Q,\cl)$ is a proper closure system.  
    
In this example  $Q_p$'s and $P$ can be recovered from $(Q,\cl)$:  $\cl(x)=\cl(y)$ is an equivalence relation on $Q$; its classes are $Q_p$'s.   $\cl(Q_p)\subseteq\cl(Q_{p'})$   defines a partial order on these classes which is isomorphic to $(P\leq)$.

\item   Of particular interest for us in this paper is a variant of (2) when we take $(L,<)$ instead of $(P,\leq)$ and order the disjoint union of $Q_l$'s for $l\in L$ as in (2). This is an example of a totally degenerated closure system (defined in \ref{Ddegen} below). We will prove in Proposition \ref{P_degenerated} that this example is in a way canonical: any proper, totally degenerated closure system can be obtained in this way. 

\item In a special case of example (3), when all the $Q_l$'s are isomorphic (to $(P,\leq)$ say) we may consider $L\times P$ with the closure defined similarly as in item (2).  

\item Consider $L\times \{0,1\}$ ordered by: $(x,i)\leq (y,j)$ iff $x\leq y$  and $i=j$, and let $\cl$ be defined as in item (1). 
\end{enumerate}
\end{exm}

\begin{defn}\label{Ddegen} Suppose that  $(P,\cl)$ is a closure system. $\cl$  is   {\em totally degenerated} if for all finite, non-empty $X\subseteq P$ there exists $x\in X$ such that $\cl(X)=\cl(x)$.
\end{defn}
 
It is easy to check that the restriction of a totally degenerated closure operation is totally  degenerated; we will use freely this fact throughout the paper without specific mentioning.
All closure operations from Example \ref{Ebasic} are degenerated in the sense that $\cl(X)=\bigcup_{x\in X}\cl(x)$ and   the ones in items (3) and (4) are  totally degenerated. In fact, a totally degenerated closure operation is  degenerated  while the converse is not true: $\cl$ from Example \ref{Ebasic}(5) is degenerated, but not totally degenerated.  

\begin{defn}Let  $(P,\cl)$ be a closure system. For $x,y\in P$ define:
\begin{enumerate}[(i)]
\item $x\leq_\cl y$ \ if and only if \ $\cl(x)\subseteq\cl(y)$;
\item The {\em $\eps$-neighbourhood of $x$} is: \ $\eps(x):=\{y\in P\mid \cl(x)=\cl(y)\}\,.$
\item $P_{\cl}= \{\eps(x)\mid x\in P\smallsetminus\cl(\emptyset)\}$\,.
\item $ \mathbb P_\cl=(P_\cl,\leq_\cl)\ ;$   where $\leq_\cl$ is naturally extended from the order on $P$ to the order  of subsets of $P$: $X\leq_\cl Y$ iff $x\leq_\cl y$ for all $x\in X$ and $y\in Y$.
\end{enumerate}    
\end{defn} 

 According to part (i) of the next proposition $P_{\cl}$ is the quotient of $P\smallsetminus \cl(\emptyset)$ by an equivalence relation defined by: $\eps(x)=\eps(y)$. 
The projection map \ $\pi:P\smallsetminus\cl(\emptyset)\longrightarrow P_\cl$ \ is defined   by $\pi(x)=\eps(x)$. 

\begin{prop}\label{P_degenerated}
Suppose that $\cl$ is a  proper totally degenerated closure operation on $P$. Then:
\begin{enumerate}[(i)]
\item \ $\eps(x)=\eps(y)$ holds if and only if $\cl(x)=\cl(y)$. \     $P_\cl$ is a partition of $P\smallsetminus \cl(\emptyset)$.

\item \  $\eps(x)\mapsto \cl(x)$ defines an isomorphism of linear orders $\mathbb P_\cl$ \  and $(\{\cl(x)\mid x\in P\smallsetminus \cl(\emptyset)\},\subseteq)$\,. 

\item \ $\cl(X)= \cl(\emptyset)\cup\bigcup \{\,\eps(y)\,|\, (\exists x\in X)\,\pi(y)\leq_\cl \pi(x)\}\,.$  
\end{enumerate} 
\end{prop}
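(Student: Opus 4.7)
The plan is to push the definitions through using only monotonicity, transitivity, finite character, and the hypothesis of total degeneration, in roughly that order of relevance.

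For (i), the equivalence $\eps(x)=\eps(y)\Leftrightarrow\cl(x)=\cl(y)$ is essentially a tautology, since $x\in\eps(x)$ and $y\in\eps(y)$ always hold. The content of the partition claim is that each $\eps(x)$ with $x\notin\cl(\emptyset)$ sits inside $P\smallsetminus\cl(\emptyset)$: if $y\in\eps(x)$ and $y\in\cl(\emptyset)$, monotonicity and transitivity give $\cl(x)=\cl(y)\subseteq\cl(\emptyset)$, whence $x\in\cl(x)\subseteq\cl(\emptyset)$, a contradiction. The remainder is immediate, as ``$\cl(x)=\cl(y)$'' is manifestly an equivalence relation on $P\smallsetminus\cl(\emptyset)$.

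For (ii), the map $\eps(x)\mapsto\cl(x)$ is well-defined and injective by (i) and surjective by construction. Unpacking $\leq_\cl$ on subsets, $\eps(x)\leq_\cl\eps(y)$ amounts to $\cl(x')\subseteq\cl(y')$ for every $x'\in\eps(x)$ and $y'\in\eps(y)$, which by (i) is the same as $\cl(x)\subseteq\cl(y)$, so the bijection preserves and reflects order. Linearity of the image is where total degeneration finally enters: for any $x,y\in P\smallsetminus\cl(\emptyset)$, $\cl(\{x,y\})$ equals $\cl(x)$ or $\cl(y)$, and monotonicity then forces $\cl(x)\subseteq\cl(y)$ or $\cl(y)\subseteq\cl(x)$. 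Both orders are therefore linear.

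For (iii), the inclusion $\supseteq$ is immediate: $\cl(\emptyset)\subseteq\cl(X)$ by monotonicity, and if $\pi(y)\leq_\cl\pi(x)$ for some $x\in X$ then $y\in\cl(y)\subseteq\cl(x)\subseteq\cl(X)$. For $\subseteq$, take $z\in\cl(X)\smallsetminus\cl(\emptyset)$; finite character yields a finite $X_0\subseteq X$ with $z\in\cl(X_0)$, and total degeneration supplies $x\in X_0$ with $\cl(X_0)=\cl(x)$. The only step needing care is to check $x\notin\cl(\emptyset)$, which follows by contraposition: otherwise $z\in\cl(X_0)=\cl(x)\subseteq\cl(\emptyset)$. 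Transitivity then gives $\cl(z)\subseteq\cl(x)$, i.e.\ $\pi(z)\leq_\cl\pi(x)$, placing $z\in\eps(z)$ in the claimed set. I do not anticipate any real obstacle; throughout, the only mild bookkeeping concern is that $\pi$ is defined only off $\cl(\emptyset)$, and once this is tracked the result is almost a restatement of the total-degeneration hypothesis.
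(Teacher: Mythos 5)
Your proof is correct and follows essentially the same route as the paper: part (i) via the tautological equivalence of $\eps(x)=\eps(y)$ with $\cl(x)=\cl(y)$, part (ii) by unpacking $\leq_\cl$ on $\eps$-classes and invoking total degeneration for linearity, and part (iii) by the two inclusions, using that $\cl$ of any finite set collapses to $\cl$ of one of its elements. The one place you are slightly more careful than the paper is in explicitly checking that $\eps(x)\subseteq P\smallsetminus\cl(\emptyset)$ when $x\notin\cl(\emptyset)$, which the paper leaves implicit; this is a real (if minor) point that is worth making.
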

\begin{proof}(i) The first claim follows easily from the definition of $\eps$. Suppose that $\eps(x)\cap\eps(y)\neq\emptyset$, and $u\in\eps(x)\cap\eps(y)$. Then, by definition of $\eps$, we have  $\cl(x)=\cl(u)$ and $\cl(y)=\cl(u)$. Hence $ \cl(x) =\cl(y)$ and thus $\eps(x)=\eps(y)$. Therefore, non-disjoint members of $P_\cl$ are equal, so $P_\cl$ is a partition. 

\smallskip 
(ii) First we prove that  $\eps(x)\mapsto \cl(x)$ defines an isomorphism of partial orders. By part (i) it is a bijection.  If $\eps(x)\leq_\cl\eps(y)$, then in particular $x\leq_\cl y$, i.e. $\cl(x)\subseteq \cl(y)$. On the other hand, if $\cl(x)\subseteq\cl(y)$ and $x'\in\eps(x),y'\in\eps(y)$, then $\cl(x')=\cl(x)\subseteq\cl(y)=\cl(y')$, and hence $\eps(x)\leq_\cl\eps(y)$.  Therefore $\eps(x)\leq_\cl\eps(y)$ and $\cl(x)\subseteq\cl(y)$ are equivalent. Thus our map is an isomorphism. Since $\cl$ is totally degenerated, $\cl(x,y)$ is equal to   $\cl(x)$ or $\cl(y)$; thus at least one of $\cl(x)\subseteq \cl(y)$ and $\cl(y)\subseteq \cl(x)$ holds. The orders are linear.

\smallskip
(iii)  Let  $Z=\cl(\emptyset)\cup\bigcup \{\,\eps(y)\,|\, (\exists x\in X)\,\pi(y)\leq_\cl \pi(x)\}$. Assume that $y\in\cl(X)\smallsetminus \cl(\emptyset)= \bigcup_{x\in X}\cl(x) \smallsetminus\cl(\emptyset)$. Then $y\in\cl(x)$ for some $x\in X$, so $\cl(y)\subseteq \cl(x)$. By part (ii) $\eps(y)\leq_\cl\eps(x)$ holds so  $\pi(y)\leq_\cl\pi(x)$. Therefore $y\in Z$ and $\cl(X)\subseteq Z$. The other inclusion is proved similarly: Assume $z\in Z\smallsetminus\cl(\emptyset)$ and let $x\in X$ be such that $\pi(z)\leq_\cl\pi(x)$.  By part  (ii) $\eps(z)\leq_\cl\eps(x)$ implies $\cl(z)\subseteq\cl(x)$, so   $z\in\cl(x)\subseteq \cl(X)$. Hence $Z\subseteq \cl(X)$. Altogether $Z=\cl(X)$.   
\end{proof}

By Proposition \ref{P_degenerated}   a proper, totally  degenerated closure operator is like the one in Example \ref{Ebasic}(3): there is an equivalence relation on $P$ and a linear order on the classes  such that: for every $X\subseteq P$  $\cl(X)$ is  the union of all classes which are not greater than the class of some element of $X$.

\smallskip Next we recall the notion of a $\cl$-free sequence. Let $(P,\cl)$ be a  closure system and let $\mathbb I=(I,<)$ be a linear order.  
  The sequence    $(a_i\,|\,i\in I)$  of  elements of $P\smallsetminus\cl(\emptyset)$  is  called  {\em $\cl$-free} if $a_i\notin \cl(\bar a_{<i})$ holds for all $i\in I$.

\begin{prop}\label{P_totdeg_cl_iplies_invariant}Suppose that $\cl$ is a totally degenerated closure operation on $P$ and let $a,a_1,\ldots,a_n\in P\smallsetminus \cl(\emptyset)$. 
\begin{enumerate}[(i)]
\item The following conditions are all equivalent:

(a) $(a_1,a_2)$ is $\cl$-free; \ \ (b) $\eps(a_1)<_{\cl}\eps(a_2)$; \ \  (c) $a_1<_{\cl}\eps(a_2)$; \ \ (d) $\eps(a_1)<_{\cl}a_2$.

\item $(a_1,\ldots ,a_n)$ is $\cl$-free is equivalent to either of the following: \\
(a)  \ $\eps(a_1)<_{\cl}\ldots<_{\cl}\eps(a_n)$ \ (b)  \ $\pi(a_1)<_\cl\ldots <_\cl\pi(a_n)$ \ \ (c) \ $\cl(a_1)\subsetneq \ldots \subsetneq \cl(a_n)$. 
\item Every maximal $\cl$-free sequence is of order-type $\mathbb P_{\cl}$.  

\item Suppose that $\leq$ partially orders $P\smallsetminus\cl(\emptyset)$ such that all  $\cl$-free sequences are strictly increasing. Then $\eps(a)$ is $\leq$-convex and closed under $\leq$-incomparability (in $P\smallsetminus\cl(\emptyset)$).
\end{enumerate}
\end{prop}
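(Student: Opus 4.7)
The plan is to reduce every claim to Proposition \ref{P_degenerated} (which identifies $\mathbb P_\cl$ with the linear order of closures $\cl(x)$ under $\subseteq$) together with the defining property of total degeneracy.

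For (i), total degeneracy forces $\cl(a_1,a_2)$ to equal either $\cl(a_1)$ or $\cl(a_2)$; the $\cl$-free condition $a_2\notin\cl(a_1)$ rules out the first, so $\cl(a_1)\subsetneq\cl(a_2)$, which by Proposition \ref{P_degenerated}(ii) is $\eps(a_1)<_\cl \eps(a_2)$. For (c) and (d), I use that every $y\in\eps(a_2)$ has $\cl(y)=\cl(a_2)$, so $a_1<_\cl \eps(a_2)$ also collapses to $\cl(a_1)\subsetneq \cl(a_2)$; symmetrically for (d). Part (ii) then comes by induction on $n$: assuming $\cl(a_1,\ldots,a_{k-1})=\cl(a_{k-1})$, total degeneracy applied to the pair $(a_{k-1},a_k)$ together with the $\cl$-freeness condition $a_k\notin\cl(a_1,\ldots,a_{k-1})=\cl(a_{k-1})$ give $\cl(a_1,\ldots,a_k)=\cl(a_k)$ and $\cl(a_{k-1})\subsetneq\cl(a_k)$. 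This establishes (c), which translates into (a) and (b) via Proposition \ref{P_degenerated}(ii); the converse direction is immediate from the definition of $\cl$-freeness.

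For (iii), part (ii) turns any $\cl$-free sequence $(a_i\,|\,i\in I)$ into a strictly order-preserving embedding $i\mapsto \eps(a_i)$ of $I$ into the linear order $\mathbb P_\cl$. If this embedding missed a class $\eps(b)$, linearity of $\mathbb P_\cl$ would partition $I$ into an initial segment $I_b=\{i:\eps(a_i)<_\cl\eps(b)\}$ and a final segment $I^b=\{i:\eps(b)<_\cl\eps(a_i)\}$; inserting $b$ at a new index placed between them would, by (ii), yield a strictly longer $\cl$-free sequence, contradicting maximality. Hence the embedding is a bijection, hence an isomorphism onto $\mathbb P_\cl$.

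Finally for (iv), suppose $b_1,b_2\in \eps(a)$ and $b_1\leq c\leq b_2$ with $c\notin\eps(a)$. Since $\mathbb P_\cl$ is linear, either $\eps(c)<_\cl \eps(a)$ or $\eps(a)<_\cl \eps(c)$; in the first case (i) makes $(c,b_1)$ a $\cl$-free sequence so by the hypothesis on $\leq$ one has $c<b_1$, contradicting $b_1\leq c$, and the other case is symmetric. For closure under incomparability, if $b_1\in \eps(a)$ and $b_2\notin \eps(a)$ then again linearity of $\mathbb P_\cl$ makes one of $(b_1,b_2)$ or $(b_2,b_1)$ a $\cl$-free pair, hence strictly $\leq$-increasing, so $b_1$ and $b_2$ are comparable. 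The main care point is the insertion step in (iii)—that linearity of $\mathbb P_\cl$ genuinely locates the omitted class relative to the index order of $I$—but this is handed to us by Proposition \ref{P_degenerated}(ii), and the rest is an unpacking of definitions.
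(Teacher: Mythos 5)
Your proof is correct and follows essentially the same route as the paper: part (i) is reduced to $\cl(a_1)\subsetneq\cl(a_2)$ via total degeneracy and Proposition \ref{P_degenerated}(ii), part (ii) then follows (by induction or by iteration, the distinction is cosmetic), part (iii) is the observation that a maximal $\cl$-free sequence must hit every $\eps$-class, and part (iv) uses the linearity of $\mathbb P_\cl$ together with the hypothesis that $\cl$-free pairs are $\leq$-increasing. The only point the paper compresses and you might have made fully explicit is that the equivalence in (ii) extends from finite tuples to arbitrary index orders $(I,<)$ by finite character of $\cl$, which underlies the insertion step in your proof of (iii); but this is a routine remark and the paper elides it too.
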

\begin{proof}
\smallskip (i) By Proposition \ref{P_degenerated}(ii) we have that $(a_1,a_2)$ is $\cl$-free iff $\cl(a_1)\subsetneq\cl(a_2)$ iff $\eps(a_1)<_\cl\eps(a_2)$. Therefore (a)$\Leftrightarrow$(b) holds. (b)$\Rightarrow$(c) and (b)$\Rightarrow$(d) are obvious. To prove (c)$\Rightarrow$(b) assume that $a_1<_\cl\eps(a_2)$. Then by definition  $\cl(a_1)\subsetneq\cl(a_2)$, hence by Proposition \ref{P_degenerated}(ii) $\eps(a_1)<_\cl\eps(a_2)$, which proves (c)$\Rightarrow$(b). Similarly (d)$\Rightarrow$(b) holds.

\smallskip (ii) This is an easy consequence of part (i) and  Proposition \ref{P_degenerated}.

\smallskip
(iii) Note that for a linear order $\mathbb I=(I,<)$, a sequence of elements in $P\smallsetminus \cl(\emptyset)$, $(a_i\mid i\in I)$, is $\cl$-free if and only if $(\cl(a_i)\mid i\in I)$ is  $\subsetneq$-increasing. Therefore, a sequence $(a_i\mid i\in I)$ is maximal $\cl$-free iff $\{\cl(a_i)\mid i\in I\}= \{\cl(x)\mid x\in P\smallsetminus \cl(\emptyset)\}$, in which case, by Proposition \ref{P_degenerated}(ii), $\mathbb I\cong\mathbb P_\cl$ holds.

\smallskip 
(iv)  Suppose that $x\leq x'$ both belong to $\eps(a)$; then $\eps(x)=\eps(x')=\eps(a)$. Assume that   $y\notin\cl(\emptyset)$.  $y\notin\eps(a)$ implies   either $\eps(a)<\eps(y)$ or $\eps(y)<\eps(a)$, so   either $x'<y$ or $y<x$ holds and $x<y<x'$  cannot hold. Hence $x<y<x'$ implies $y\in\eps(a)$ and $\eps(a)$ is convex. That $\eps(a)$ is closed under incomparability is proved similarly. 
\end{proof}

Till the end of the section  we will discuss closure operations appearing in the model-theoretic context of regular types from \cite{PT}. So from now on we operate in $\Mon$.  

\begin{defn}\label{D_clp}
Let $(N,...)$ be any first-order structure and assume that $p\in S_1(N)$  does not split over $A\subset N$.  For $X\subseteq N$ define the {\em $p$-closure} operator on $N$: \ \ $\cl_p^A(X)=\{x\in N\mid x\nvDash p\mid AX\}\,.$
\end{defn}

It is easy to see that $\cl_p^A$ satisfies  Monotonicity and Finite character. Therefore, $\cl_p^A$ is closure operation   iff it satisfies Transitivity. Usually we will consider the restriction of $\cl^A_{p}$ to  $p_{\strok A}(N)$, it will be denoted also by $\cl^A_p$.  If $(I,<)$ is a linear order then    $(a_i\mid i\in I)\subseteq p_{\strok A}(N)$ is a {\em weak Morley sequence} in $p$ over $A\subset N$ if $a_i\models p_{\strok A\bar a_{<i}}$ for all $i\in I$.  

\begin{rmk}\label{Rmorley_equals_clfree}
Assume that $\cl_p^A$ is a closure operation on $p_{\strok A}(N)$. Then the $\cl_p^A$-free sequences are precisely the  weak Morley sequences in $p$ over $A$. Indeed, for  any  $a_1\in p_{\strok A}(N)$: \ $a_2\models p_{\strok Aa_1}$ \  iff \ $a_2\notin\cl_p^A(a_1)$.
\end{rmk}

The next theorem is a bit more general version of Theorem \ref{T_regular_implies_totdeg_uvod}.  

\begin{thm}\label{T_cl_implies_order}
Suppose that  $p\in S(N)$ does not split over $A\subset N$ and that   $\cl^A_{p}$ is  a closure operation on $p_{\strok A}(N)$. Further, assume that $N$ is $(|T|+|A|)^+$-saturated,  that $(a,b)\in N^2$ is a weak Morley sequence in $p$ over $A$,  and that $\tp(a,b/A)\neq\tp(b,a/A)$. Then:
\begin{enumerate}[(i)]
\item There is an $A$-definable partial order such that any weak Morley sequence in $p$ over $A$ is strictly increasing.

\item $\cl^A_p$ is a totally degenerated operation on $p_{\strok A}(N)$. 

\item For any $X\subseteq N$ the order type of any maximal  in $X$  weak Morley sequence in $p$ over $A$ does not depend on the particular choice of the sequence.
\end{enumerate}
\end{thm}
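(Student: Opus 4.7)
The main step is part (ii); parts (i) and (iii) will follow from it with comparatively little work, so I would prove them in the order (ii), (i), (iii).

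For (ii), it suffices by induction on $|X|$ to verify total degeneracy on two-element subsets $\{a,b\}\subseteq p_{\strok A}(N)$. By Remark~\ref{Rmorley_equals_clfree} this amounts to showing that at most one of $(a,b),(b,a)$ is a weak Morley sequence in $p$ over $A$. Suppose both are. Since $p$ does not split over $A$, any weak Morley pair $(c,d)$ in $p$ over $A$ satisfies $\tp(c,d/A)=q$ for one fixed type $q$ determined by $p_{\strok A}$; hence $\tp(a,b/A)=q=\tp(b,a/A)$, and in particular $(a,b)\equiv_A(a_0,b_0)$, where $(a_0,b_0)$ denotes the given asymmetric pair. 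Pick $\sigma\in\Aut(\Mon/A)$ with $\sigma(a,b)=(a_0,b_0)$; then $\sigma(b,a)=(b_0,a_0)$, so $\tp(b_0,a_0/A)=\tp(b,a/A)=q=\tp(a_0,b_0/A)$, contradicting the asymmetry hypothesis.

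For (i), once (ii) is established, the relation ``$(x,y)$ is a weak Morley sequence over $A$'' is asymmetric and transitive on $p_{\strok A}(N)^2$: transitivity is immediate from total degeneracy, since $(a,b)$ and $(b,c)$ being weak Morley forces $\cl^A_p(a)\subsetneq\cl^A_p(b)\subsetneq\cl^A_p(c)$, and then $(a,c)$ is also weak Morley. This yields a strict partial order type-definable over $A$ by $q(x,y)$. To produce a single $A$-formula, I would apply compactness twice: the inconsistency of $q(x,y)\cup q(y,x)$ gives $\chi\in q$ with $\chi(x,y)\wedge\chi(y,x)\vdash\bot$, and the inconsistency of $q(x,y)\cup q(y,z)\cup\{\neg\psi(x,z):\psi\in q\}$ (which holds by transitivity on the locus) gives further conjuncts enforcing transitivity. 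After suitable strengthening (possibly iterated), one obtains a single $\chi\in q$ for which $x\leq y:=(x=y)\vee\chi(x,y)$ is an $A$-definable partial order in which every weak Morley sequence is strictly increasing.

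For (iii), given any $X\subseteq\Mon$, the restriction of $\cl^A_p$ to $X\cap p_{\strok A}(N)$ is again a totally degenerated closure operation, and by Remark~\ref{Rmorley_equals_clfree} the weak Morley sequences in $p$ over $A$ whose entries lie in $X$ are precisely the $\cl^A_p$-free sequences in $X\cap p_{\strok A}(N)$. Proposition~\ref{P_totdeg_cl_iplies_invariant}(iii) then gives that every maximal such sequence has order type $\mathbb{P}_{\cl^A_p\strok (X\cap p_{\strok A}(N))}$, which depends only on $X$.

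The main obstacle lies in (i): asymmetry of $\chi$ drops out cleanly from one compactness step, but forcing transitivity of a single formula (rather than merely of the type-definable relation) requires a more delicate, possibly iterated compactness argument. An appealing alternative would be to show directly that the relation $x\in\cl^A_p(y)$ is equivalent on the locus to a single $A$-formula, but that equivalence is itself nontrivial.
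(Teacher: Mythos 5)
Your proof of part (ii) is correct, and it takes a genuinely different route from the paper's. The paper proves (i) first and then derives (ii) from the definable order: by non-splitting $c<x\in p(x)$ for $c\models p$, so for finite $B\subseteq p_{\strok A}(N)$ any $<$-maximal $d\in B$ has $B\subseteq\cl_p(d)$. Your argument instead goes straight at total degeneracy via non-splitting: you observe that non-splitting forces all weak Morley pairs to realize a single type $q$ over $A$, so if both $(a,b)$ and $(b,a)$ were weak Morley then $\tp(a,b/A)=q=\tp(b,a/A)$, contradicting the given asymmetric pair. This is shorter and more self-contained. (One small point you leave implicit: from ``at most one of $(a,b),(b,a)$ is weak Morley'' you still need $\cl(\{a,b\})\in\{\cl(a),\cl(b)\}$, which follows since $a\in\cl(b)$ gives $\cl(a)\subseteq\cl(b)$ and hence $\cl(\{a,b\})=\cl(b)$; and the induction to general finite $X$ does go through.) Part (iii) is then the same as the paper's: restrict $\cl^A_p$ to $X\cap p_{\strok A}(N)$ and invoke Proposition \ref{P_totdeg_cl_iplies_invariant}(iii).

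Part (i), however, has a genuine gap, and you have correctly identified where it is. The iterated compactness you propose has no reason to terminate: from $q(x,y)\cup q(y,z)\vdash\psi(x,z)$ you extract $\chi\in q$ with $\chi(x,y)\wedge\chi(y,z)\vdash\psi(x,z)$, but $\chi$ is in general strictly stronger than $\psi$, so you merely shift the problem to a new pair $(\chi,\psi')$ and there is no descent. More seriously, the goal as you have set it up is unachievable in general: asking for a single $A$-formula $\chi\in q$ with $\chi(x,y)\wedge\chi(y,z)\vdash\chi(x,z)$ and $\chi(x,y)\wedge\chi(y,x)\vdash\bot$ on the locus amounts to making the weak Morley relation itself relatively definable, which is precisely the ``simple over $A$'' condition the paper introduces later and explicitly treats as a special case (Proposition \ref{T_nonsimple_anyinv} is devoted to the non-simple case). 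The paper's proof of (i) sidesteps this obstacle entirely: it does not try to define the Morley relation but produces a coarser, automatically transitive relation. Concretely, pick $\phi(x,y)\in\tp(a,b/A)$ with $\phi(a,x)\in p$ and $\phi(x,b)\notin p$ (possible since $a\in\cl_p(b)$, $b\notin\cl_p(a)$), make it asymmetric by conjoining $\neg\phi(y,x)$, and use non-splitting together with the transitivity of $\cl_p$ to prove the key claim $p_{\strok A}(x)\cup\{\phi(x,a)\}\vdash\phi(x,b)$; saturation and compactness then yield $\theta\in p_{\strok A}$ such that, on $\theta(N)$, the relation ``$\phi(N,x)\subseteq\phi(N,y)$'' is a quasi-order whose strict part makes weak Morley sequences increase. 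Transitivity there is free, since it is just inclusion of sets. This inclusion trick is the idea your outline is missing.
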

\begin{proof}(i)  The proof of Theorem 3.1 from \cite{PT} goes through.  For the sake of completeness we include it.  $\tp(a,b/A)\neq\tp(b,a/A)$ implies  $a\in\cl_p(b)$ and  $b\notin\cl_p(a)$. Then there is   $\phi(x,y)\in\tp(a,b/A)$ such that $\phi(a,x)\in p$ and \,$\phi(x,b)\notin p$. Moreover, after replacing it by $\phi(x,y)\wedge\neg\phi(y,x)$ we may assume that $\phi$ is asymmetric: $\models \phi(x,y)\Rightarrow \neg\phi(y,x)$.  By non-splitting, $\phi(x,a)\notin p$. We {\em claim}:
$$p_{\strok A}(x)\cup\{\phi(x,a)\}\vdash\phi(x,b)\,.$$ 
To prove it, assume that $d$ realizes $p_{\strok A}$ and satisfies 
\ $ \phi(x,a)$. Then $\models \phi(d,a)$  implies $d\in\cl_p(a)$ which together with $b\notin \cl_p(a)$ implies $b\notin\cl_p(d)$. Thus $(d,b)$ is a weak Morley sequence in $p$ over $A$ and $\models \phi(d,b)$ follows. Since $N$ is $|A|^+$-saturated,  the claim follows. 

By compactness, there is a formula $\theta(x)\in p_{\strok A}(x)$ such that $\models (\theta(t)\wedge\phi(t,a))\Rightarrow\phi(t,b)$\,. Therefore, $\phi(N,x)\subseteq \phi(N,y)$ defines a quasi-order $\leq$ on $\theta(N)$ such that $a\leq b$ holds. The asymmetry of $\phi$ implies $a\notin\phi(N,a)$ so  $a<b$ holds. Then it holds for any two consecutive elements in a weak Morley sequence, so the sequence is strictly increasing.

\smallskip (ii) 
First note that, by non-splitting,  $c<x\in p(x)$ for any $c$ realizing $p$. Thus $\cl_p(c)$ contains all the realizations of $p_{\strok A}$ which are either smaller or incomparable to $c$.  We prove that for any finite, non-empty $B\subset p_{\strok A}(N)$ there exists $d\in B$ such that $\cl_p(B)=\cl_p(b)$. Indeed, take $d\in B$ to be a maximal element (in $B$). Consider the formula $d<x$. It belongs to $p(x)$ and no element of $B$ satisfies it. Hence, no element of $B$ realizes $p_{\strok Ad}$, so $B\subseteq \cl_p(d)$ holds.  Therefore, $\cl_p$ is totally degenerated. 

\medskip 
(iii) Consider the restriction of $\cl^A_p$ to $A\,X$. It is totally degenerated so, by Proposition \ref{P_totdeg_cl_iplies_invariant}, any two maximal $\cl^A_p$-free sequences have the same order type. By Remark \ref{Rmorley_equals_clfree}  $\cl^A_p$-free sequences are weak Morley sequences in $p$ over $A$. The conclusion follows. 
\end{proof}

\begin{defn}In the context of Theorem \ref{T_cl_implies_order}, the order type of any maximal $\cl^A_p$-free sequence of elements of $X\subseteq N$ is called the {\em $p$-invariant of $X$ over $A$} and is denoted by   $\Inv_{p,A}(X)$. 
\end{defn}

Theorem \ref{T_cl_implies_order} has a strong assumption that $N$ is sufficiently saturated and a weak assumption that $\cl^A_p$ is a closure operator on $p_{\strok A}(N)$ (instead of on the whole of $N$). We can relax the first and strengthen the second assumption and still deduce the same conclusion. This is done in the next theorem; since it will not be used further in the text  the details of the proof are left to the reader.  

\begin{thm} 
Suppose that  $p\in S(N)$ does not split over $A\subset N$ and that   $\cl^A_{p}$ is  a closure operation on $\phi(N)$ for some $\phi(x)\in p_{\strok A}(x)$. Further, assume that  $(a,b)\in N^2$ is a weak Morley sequence in $p$ over $A$  and that $\tp(a,b/A)\neq\tp(b,a/A)$. Then:
\begin{enumerate}[(i)]
\item There is an $A$-definable partial order such that any weak Morley sequence in $p$ over $A$ is strictly increasing.

\item $\cl^A_p$ is a totally degenerated operation on $p_{\strok A}(N)$. 

\item For any $X\subseteq N$ the order type of any maximal  in $X$  weak Morley sequence in $p$ over $A$ does not depend on the particular choice of the sequence. 
\end{enumerate}
\end{thm}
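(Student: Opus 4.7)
The plan is to reduce to Theorem~\ref{T_cl_implies_order} by passing to a sufficiently saturated elementary extension. Let $N^{*}\succeq N$ be a $(|T|+|A|)^{+}$-saturated elementary extension. Since $p$ does not split over $A$, it extends (by a standard Zorn-style construction) to a global $A$-invariant type $\frak p\in S(\Mon)$; set $p^{*}:=\frak p\strok N^{*}$. Then $p^{*}\in S(N^{*})$ extends $p$, does not split over $A$, and $(a,b)\in N^{2}\subseteq(N^{*})^{2}$ remains a weak Morley pair in $p^{*}$ over $A$ with $\tp(a,b/A)\neq\tp(b,a/A)$, since weak-Morley-ness of a pair over $A$ is detected by its $A$-type.

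The crux is to verify that $\cl_{p^{*}}^{A}$ is a closure operation on $\phi(N^{*})$. Monotonicity and finite character are immediate; for transitivity I would argue by transfer from $\phi(N)$ using $A$-invariance. Membership of any formula $\psi(x,\bar y)$ over $A$ in $p^{*}$ depends, by $A$-invariance of $\frak p$, only on $\tp(\bar y/A)$; likewise for all the other closure-witnessing formulas. A hypothetical failure of transitivity in $\phi(N^{*})$ at a triple $(c,\bar y,X)$ is therefore encoded, via the defining schema of $\frak p$ over $A$, by a partial $A$-type on $(c,\bar y,X)$ together with finitely many first-order witnesses. Using saturation of $N^{*}$, elementarity $N\preceq N^{*}$, and the $A$-invariance of the schema, this failure can be pushed down to a corresponding failure in $\phi(N)$, contradicting the hypothesis.

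Granted that $\cl_{p^{*}}^{A}$ is a closure operation on $\phi(N^{*})$, Theorem~\ref{T_cl_implies_order} applies to $(N^{*},p^{*},A)$ with witness $(a,b)$ (its proof uses only that $\cl$ is a closure operation on an $A$-definable subset of $N^{*}$ containing $p^{*}_{\strok A}(N^{*})$). This yields an $A$-definable partial order $\leq$ strictly increasing on weak Morley sequences in $p^{*}$ over $A$, and the total degeneration of $\cl_{p^{*}}^{A}$ on $p^{*}_{\strok A}(N^{*})$. Pulling back to $N$: the partial order is $A$-definable, so lives on $N$; any weak Morley sequence in $p$ over $A$ in $N$ is weak Morley in $p^{*}$ over $A$ in $N^{*}$, hence strictly increasing, giving (i). Item (ii) follows by restricting total degeneration from $p^{*}_{\strok A}(N^{*})$ to $p_{\strok A}(N)$, using $\cl_{p^{*}}^{A}\cap N=\cl_{p}^{A}$ (by non-splitting). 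Item (iii) then follows from (ii) via Remark~\ref{Rmorley_equals_clfree} and Proposition~\ref{P_totdeg_cl_iplies_invariant}(iii).

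The hard part is the transfer of the closure-operator property from $\phi(N)$ to $\phi(N^{*})$: one must translate the finite first-order witnesses of a would-be closure-failure in $N^{*}$ into corresponding witnesses in $N$, exploiting the $A$-invariance of $\frak p$ throughout. It is precisely because the closure hypothesis is formulated over the $A$-definable set $\phi(N)$ — rather than the merely type-definable locus $p_{\strok A}(N)$ used in Theorem~\ref{T_cl_implies_order} — that this transfer is viable without further saturation of $N$.
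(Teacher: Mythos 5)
Your proposed route is genuinely different from the paper's. The paper proves part (i) by directly adapting "(a part of) the proof of Theorem~6.1 from \cite{PT}" inside the given structure $N$, using the $A$-definability of $\phi$ to replace the saturation hypothesis of Theorem~\ref{T_cl_implies_order}; parts (ii)--(iii) then repeat the argument of Theorem~\ref{T_cl_implies_order}. Your proposal instead reduces to Theorem~\ref{T_cl_implies_order} by passing to a saturated elementary extension $N^{*}$, which requires two nontrivial transfer claims, and both have genuine gaps.

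First, it is not true that an arbitrary $p\in S(N)$ which does not split over $A\subset N$ automatically extends to a global $A$-invariant type. The natural "$A$-invariant completion'' $\Sigma=\{\theta(x,\bar b):\exists\,\bar b'\in N,\ \bar b'\equiv_{A}\bar b,\ \theta(x,\bar b')\in p\}$ need not even be consistent, let alone complete: if $\bar b_{1},\bar b_{2}\in\Mon$ realize $A$-types with no common extension realized in $N$, nothing in $p$ forces $\theta_{1}(x,\bar b_{1})\wedge\theta_{2}(x,\bar b_{2})$ to be satisfiable. When $N$ is $(|T|+|A|)^{+}$-saturated over $A$ this works, but the whole point of the present theorem is to drop that hypothesis, so the "standard Zorn-style construction'' is not available.

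Second, even granting a suitable $p^{*}\in S(N^{*})$, the claim that $\cl^{A}_{p^{*}}$ is a closure operator on $\phi(N^{*})$ does not follow by "pushing down finitely many first-order witnesses.'' A failure of transitivity at $(c,\bar y,X)\subseteq\phi(N^{*})$ involves the condition $c\models p^{*}_{\strok AX}$, which is a $\Pi_{1}$-condition ranging over all $A$-formulas; it is not captured by a finite configuration. To move the failure into $N$ you would need $N$ to realize $\tp(c,\bar y,X/A)$, and there is no reason an arbitrary $N$ does so. In other words, ``$\cl^{A}_{p}$ is a closure operator on $\phi(N)$'' is not a first-order property of $N$, so it neither transfers up to $N^{*}$ by elementarity nor pulls a counterexample back down. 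This is precisely the obstruction the paper avoids by arguing directly inside $N$, where the $A$-definability of $\phi$ lets the crucial implication $\phi(x)\wedge\varphi(x,a)\Rightarrow\varphi(x,b)$ (for the asymmetric formula $\varphi$) be established as a first-order fact of $N$ over $Aab$ rather than via a compactness argument that would require saturation.
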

\begin{proof}The proof of part (i) follows (a part of) the proof of Theorem 6.1 from \cite{PT}. (ii)-(iii) then follow as in the proof of Theorem \ref{T_cl_implies_order}.
\end{proof}

\section{Regularity}\label{X_regular}

A global non-algebraic type $\mathfrak p$ is said to be {\em regular over $A$} if it is invariant over $A$  and: 
\begin{center}
  for all $B\supseteq A$ and $a\models\mathfrak p_{\strok A}$: \ either \ $a\models \mathfrak p_{\strok B}$ \ or \ $\mathfrak p_{\strok B}\forces \mathfrak p_{\strok Ba}$ holds.
  \end{center}
An invariant global type is {\em regular} if it is regular over some small set. This definition is from  \cite{Tgs} where a minor imprecision from \cite{PT} was corrected. In order to prove Theorems   \ref{T_regular_implies_totdeg_uvod} and \ref{T_omit_uvod} we will not need the full power of the regularity assumption.

\begin{defn}\label{Dweakly_regular}
A global $A$-invariant type is {\em weakly regular over $A$} if it is $A$-invariant and for all $X\subset \frak p_{\strok A}(\Mon)$ and  $a\models\mathfrak p_{\strok A}$: \ either \ $a\models \mathfrak p_{\strok AX}$ \ or \ $\mathfrak p_{\strok AX}\forces \mathfrak p_{\strok AXa}$ holds. $\frak p$ is {\em weakly regular} if it is weakly regular over some small $A$.
\end{defn}

An alternative way of defining weak regularity over $A$ is that $\cl_\frak p^A$ is a closure operator on $\frak p_{\strok A}(\Mon)$. 

\begin{rmk}\label{R_reg_implies_closure}
Suppose that  $\frak p$ is weakly regular over $A$.

(i) $\cl^A_p$ is a closure operator on $\frak p_{\strok A}(\Mon)$. As we noted after Definition \ref{D_clp} it suffices to verify the transitivity. So assume that  $X\subset \frak p_{\strok A}(\Mon)$ is small, that $a,b$ realize $\frak p_{\strok A}$, $a\in\cl^A_\frak p(X)$ and $b\in\cl_\frak p^A(Xa)$. Then  $a\nmodels \frak p_{\strok AX}$ and $b\nmodels \frak p_{\strok AXa}$ follow from the definition of $\cl^A_\frak p$ and, by definition of weak regularity over $A$, they imply $b\in\cl^A_\frak p(X)$. Therefore $\cl^A_\frak p(X)=\cl^A_\frak p(Xa)$. By induction, using Finite Character, the transitivity follows.

\smallskip
(ii) Morley sequences in $\frak p$ over $A$ are precisely the $\cl^A_\frak p$-free sequences. In particular,  there are $\cl^A_\frak p$-free sequences of size $|\Mon|$.

\smallskip
(iii) Clearly, the regularity over $A$ implies weak regularity over $A$. Actually, the regularity of $\frak p$ over $A$ is equivalent to $\cl^B_\frak p$ being a closure operation on $\frak p_{\strok B}(\Mon)$ for every $B\supseteq A$. However, we will not need  this fact further in the text, nor we will need to change the base set $A$ over which $\frak p$ is regular in $\cl^A_\frak p$; we will consider only $\cl^A_\frak p$ for a fixed $A$.
\end{rmk} 

\begin{defn}\label{Dassym}
An invariant  type $\frak p$ is {\em $A$-asymmetric} if it is invariant over $A$ and $\tp(a,b/A)\neq\tp(b,a/A)$ holds for some (any) Morley sequence  $(a,b)$ in $\frak p$ over $A$. 
\end{defn}

By a (weakly) regular, $A$-asymmetric type we will mean an $A$-asymmetric type which s (weakly) regular over $A$. We can now easily prove an extended version of Theorem \ref{T_regular_implies_totdeg_uvod}.
    
\begin{thm}\label{T_regular_implies_totdeg} Suppose that $\frak p$ is weakly regular and $A$-asymmetric.
\begin{enumerate}[(i)]
\item There is an $A$-definable partial order such that any Morley sequence in $p$ over $A$ is strictly increasing.

\item $\cl^A_p$ is a totally degenerated closure operation on $\frak p_{\strok A}(\Mon)$. 

\item For any   $X\subseteq \Mon$ the order type of any maximal Morley sequence (in $p$ over $A$) consisting of elements of $X$   does not  depend   on the particular choice of the sequence. 
\end{enumerate}
\end{thm}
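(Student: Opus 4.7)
The plan is to reduce the statement to a direct application of Theorem~\ref{T_cl_implies_order}, taking $N=\Mon$ and $p=\frak p$. The monster is certainly $(|T|+|A|)^+$-saturated, so the saturation hypothesis there is trivially met. What remains is to match the two substantive hypotheses of that theorem: that $\cl_\frak p^A$ is a closure operation on $\frak p_{\strok A}(\Mon)$, and that there exists a weak Morley sequence $(a,b)$ in $\frak p$ over $A$ with $\tp(a,b/A)\neq\tp(b,a/A)$.

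The first of these is exactly Remark~\ref{R_reg_implies_closure}(i): monotonicity and finite character of $\cl_\frak p^A$ are immediate from its definition, and weak regularity over $A$ is tailored precisely to deliver transitivity via the dichotomy $a\models\frak p_{\strok AX}$ or $\frak p_{\strok AX}\vdash\frak p_{\strok AXa}$, plus induction on finite character. The second hypothesis is just a restatement of $A$-asymmetry (Definition~\ref{Dassym}): by Remark~\ref{R_reg_implies_closure}(ii) the Morley sequences in $\frak p$ over $A$ coincide with the $\cl_\frak p^A$-free sequences, hence with the weak Morley sequences in the sense of Section~\ref{X_closure} and Remark~\ref{Rmorley_equals_clfree}.

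Granted these hypotheses, the conclusions (i)--(iii) of Theorem~\ref{T_cl_implies_order} transfer verbatim to give (i)--(iii) here. There is essentially no obstacle: all the substantive work --- producing the $A$-definable partial order from an asymmetric witnessing formula $\phi(x,y)\in\tp(a,b/A)$ with $\phi(a,x)\in\frak p$ and $\phi(x,b)\notin\frak p$, showing that a maximal element of any finite $B\subseteq\frak p_{\strok A}(\Mon)$ generates $\cl_\frak p^A(B)$ (using the formula $d<x$ which lies in $\frak p$ by non-splitting), and deducing the invariance of order types from Proposition~\ref{P_totdeg_cl_iplies_invariant}(iii) --- has already been carried out in Theorem~\ref{T_cl_implies_order}. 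The only genuine ``work'' in the present theorem is translating between the closure-theoretic vocabulary of Section~\ref{X_closure} and the regularity vocabulary of Definition~\ref{Dweakly_regular}, which is done by the two remarks cited above.
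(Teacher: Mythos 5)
Your proposal matches the paper's own argument exactly: the paper proves this theorem by citing Remark~\ref{R_reg_implies_closure} to verify that $\cl^A_\frak p$ is a closure operation on $\frak p_{\strok A}(\Mon)$, noting that $A$-asymmetry supplies the remaining hypothesis, and then invoking Theorem~\ref{T_cl_implies_order} for (i)--(iii). Your more detailed unpacking of how the hypotheses line up is accurate and adds nothing that would conflict with the paper's (terser) proof.
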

\begin{proof}
$\cl^A_{\mathfrak p}$ is a closure operation on $\mathfrak p_{\strok A}(\Mon)$ by Remark \ref{R_reg_implies_closure}, and $A$-asymmetry of $\frak p$ over $A$ guaranties that all the assumptions of   Theorem  \ref{T_cl_implies_order} are satisfied. (i)-(iii) follow. 
\end{proof}

We will refer to the order satisfying condition of part (i) of the  theorem  as to a witness of  $A$-asymmetry of  $\frak p$;  note that the witnessing order may not be unique.  
 
\begin{nota}
  Suppose that $\mathfrak p$ is  weakly regular and $A$-asymmetric, and that $X\subset \Mon$; let $p=\frak p _{\strok A}$. Then, by Theorem \ref{T_regular_implies_totdeg}(ii) $\cl^A_\frak p$ is a totally degenerated closure operation on $p(\Mon)$. We adapt the notation from Section \ref{X_closure} to that context.  
\begin{enumerate}
\item $\cl_\frak p$ will denote $\cl^A_\frak p$ \ (the meaning of the set $A$  will always be clear from the context, and we will not consider   operations $\cl^B_\frak p$ for $B\supset A$).

\item $\eps_\frak p(a):=\{b\in p(\Mon)\,|\, \cl_\frak p(a)=\cl_\frak p(b)\}=\{b\in p(\Mon)\,|\,b\nmodels \frak p_{\strok Aa} \ \mbox{and}  \  a\nmodels \frak p_{\strok Ab}\,\}.$ \ In other words, $\eps_\frak p(a)$ is the set of all $b\models \frak p_{\strok A}$ such that $\{a,b\}$ cannot be ordered into a Morley sequence in $\frak p$ over $A$. 

\item $\Lin_A(\frak p):= \{\eps_\frak p(a)\,|\,a\in p(\Mon)\}$; it corresponds to $P_\cl$ from \ref{P_totdeg_cl_iplies_invariant} and  is naturally (linearly) ordered by: \ $\eps_\frak p(a)<\eps_\frak p(b)$ \ iff \ $(a,b)$ is a Morley sequence in $\frak p$ over $A$. 

\item $\pi_\frak p$ is the projection map from $p(\Mon)$ onto $\Lin(\frak p)$ defined by $\pi_\frak p(a)=\eps_\frak p(a)$

\item     $\Inv_{\mathfrak p,A}(X)$  denotes the linear-order type of any maximal Morley sequence (in $p$ over $A$) consisting of elements of $X$. If no such sequence exists then $\Inv_{\mathfrak p,A}(X)=\emptyset$.
\end{enumerate} 
\end{nota}

Next we note important corollaries of Theorem \ref{T_cl_implies_order} and Proposition \ref{P_totdeg_cl_iplies_invariant} which will be used  further in the text without specific mentioning. 

\begin{cor}\label{cor_order_and_morley} Suppose that $\frak p$ is weakly regular and $A$-asymmetric witnessed by $<$, and that  $a,a_1,...,a_n$ realize $\mathfrak p_{\strok A}$. 
\begin{enumerate}[(i)]
\item The following conditions are all equivalent:

(a) $(a_1,a_2)$ is a Morley sequence in $\frak p$ over $A$; \ \ (b) $\pi_\frak p(a_1)<\pi_\frak p(a_2)$; \ \ (c) $\eps_\frak p(a_1)< \eps_\frak p(a_2)$;\\ (d) $a_1< \eps_\frak p(a_2)$; \ \ (e) $\eps_\frak p (a_1)< a_2$.

\item $(a_1,...,a_n)$ is   a Morley sequence in $\frak p$ over $A$ if and only if either of the following holds: \\
(a)  \ $\eps_\frak p(a_1)<  ...< \eps_\frak p(a_n)$; \ \ (b)  \ $\pi_\frak p(a_1)< ...<\pi_\frak p(a_n)$; \ \ (c) \ $\cl_\frak p(a_1)\subsetneq  ...\subsetneq \cl_\frak p(a_n)$\,.
\end{enumerate}
\end{cor}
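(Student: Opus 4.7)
The plan is essentially to invoke Proposition \ref{P_totdeg_cl_iplies_invariant} with $\cl = \cl^A_{\frak p}$ and translate the conclusions into the language of Morley sequences. The preparations are already in place: Theorem \ref{T_regular_implies_totdeg}(ii) tells us $\cl^A_{\frak p}$ is a totally degenerated closure operation on $\frak p_{\strok A}(\Mon)$, and Remark \ref{Rmorley_equals_clfree} identifies the $\cl^A_{\frak p}$-free sequences with the Morley sequences in $\frak p$ over $A$. Moreover, the order on $\Lin_A(\frak p)$ defined by ``$\eps_{\frak p}(a)<\eps_{\frak p}(b)$ iff $(a,b)$ is a Morley sequence in $\frak p$ over $A$'' coincides with the order $\leq_{\cl^A_{\frak p}}$ from Section \ref{X_closure}: by $A$-asymmetry together with total degeneracy, $(a,b)$ being Morley is equivalent to $\cl_{\frak p}(a)\subsetneq\cl_{\frak p}(b)$.

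With those identifications in hand, part (ii) drops out at once from Proposition \ref{P_totdeg_cl_iplies_invariant}(ii), and in part (i) the equivalences (a)$\Leftrightarrow$(b)$\Leftrightarrow$(c) follow from Proposition \ref{P_totdeg_cl_iplies_invariant}(i). It then remains to bring the witness order $<$ on $\frak p_{\strok A}(\Mon)$ into the picture and to establish (d) and (e). For (c)$\Rightarrow$(d): fix any $b\in\eps_{\frak p}(a_2)$; then $\pi_{\frak p}(b)=\pi_{\frak p}(a_2)>\pi_{\frak p}(a_1)$, so applying (b)$\Rightarrow$(a) to the pair $(a_1,b)$ shows that $(a_1,b)$ is a Morley sequence in $\frak p$ over $A$, whence $a_1<b$ by Theorem \ref{T_regular_implies_totdeg}(i). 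Since this holds for every $b\in\eps_{\frak p}(a_2)$, (d) is established; (c)$\Rightarrow$(e) is entirely symmetric.

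For the reverse (d)$\Rightarrow$(c): because $a_2\in\eps_{\frak p}(a_2)$, (d) yields $a_1<a_2$, which rules out $(a_2,a_1)$ being a Morley sequence (as Morley sequences are strictly $<$-increasing) and also rules out $a_1\in\eps_{\frak p}(a_2)$ (which would force the absurd $a_1<a_1$). Since $\eps_{\frak p}(a_1)\neq\eps_{\frak p}(a_2)$, linearity of the order on $\Lin_A(\frak p)$ (Proposition \ref{P_degenerated}(ii), available because $\cl^A_{\frak p}$ is totally degenerated) leaves only the possibilities $\eps_{\frak p}(a_1)<\eps_{\frak p}(a_2)$ or $\eps_{\frak p}(a_2)<\eps_{\frak p}(a_1)$; the latter would give a Morley sequence $(a_2,a_1)$, contradicting $a_1<a_2$. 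Hence $\eps_{\frak p}(a_1)<\eps_{\frak p}(a_2)$, i.e. (c). The argument for (e)$\Rightarrow$(c) is symmetric.

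There is no genuine obstacle here; the whole corollary is bookkeeping between the witness partial order $<$ on realizations of $\frak p_{\strok A}$ and the induced linear order on $\eps_{\frak p}$-classes, and total degeneracy together with Theorem \ref{T_regular_implies_totdeg}(i) make these translations routine.
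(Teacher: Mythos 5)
Your proof is correct and takes essentially the same approach as the paper: the paper presents this corollary without proof, as an immediate consequence of Theorem \ref{T_regular_implies_totdeg} (giving total degeneracy of $\cl^A_{\frak p}$), Remark \ref{Rmorley_equals_clfree} (identifying Morley sequences with $\cl^A_{\frak p}$-free sequences), and Proposition \ref{P_totdeg_cl_iplies_invariant}. You spell out exactly the translation the paper leaves implicit, including the small argument that passing between the witness partial order $<$ and the $<_{\cl}$ comparison of $\eps_{\frak p}$-classes is harmless (via irreflexivity of $<$ and the linearity of $\mathbb{L}_A(\frak p)$ coming from total degeneracy), which is the only point requiring any thought.
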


\begin{cor}  Suppose that $\frak p$ is weakly regular and $A$-asymmetric.
\begin{enumerate}[(i)]
\item Suppose that $\leq$ partially orders $P$ such that all  $\cl$-free sequences are strictly increasing. Then $\eps_\frak p(a)$ is $\leq$-convex and closed under $\leq$-incomparability in $(\frak p_{\strok A}(\Mon),\leq)$: if $b\models \mathfrak p_{\strok A}$ is $\leq$-incomparable to an element of $\eps_\frak p(a)$ then $b\in \eps_\frak p(a)$. 
\item There is a canonical linear order on $\Lin_A(\frak p)$ which agrees with any witness of asymmetric regularity; it will be denoted by $\mathbb L_A(\frak p)$.  $\mathbb L_A(\frak p)$ is isomorphic to $ (\Lin_A(\frak p),<)$, where $<$ is any linear order witnessing $A$-asymmetric regularity and here it is extended to   subsets of $\frak p_{\strok A}(\frak C)$.
\end{enumerate}
\end{cor}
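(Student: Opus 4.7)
For part (i), the plan is to invoke Proposition~\ref{P_totdeg_cl_iplies_invariant}(iv) directly. The weak regularity of $\frak p$ over $A$ together with $A$-asymmetry gives, via Theorem~\ref{T_regular_implies_totdeg}(ii), that $\cl^A_\frak p$ is a totally degenerated closure operation on $\frak p_{\strok A}(\Mon)$. By Remark~\ref{Rmorley_equals_clfree} the $\cl^A_\frak p$-free sequences are exactly the Morley sequences in $\frak p$ over $A$. Hence any partial order $\leq$ on $\frak p_{\strok A}(\Mon)$ along which Morley sequences are strictly increasing makes $\cl^A_\frak p$-free sequences strictly increasing, which is precisely the hypothesis of Proposition~\ref{P_totdeg_cl_iplies_invariant}(iv). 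The stated convexity of $\eps_\frak p(a)$ and its closure under $\leq$-incomparability follow immediately.

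For part (ii), the plan is to define $\mathbb L_A(\frak p)$ intrinsically, so that its independence from any witness is built in, and then verify it coincides with the set-extended witness order on $\Lin_A(\frak p)$. Concretely, set $\eps_\frak p(a)<_{\mathbb L}\eps_\frak p(b)$ iff $(a,b)$ is a Morley sequence in $\frak p$ over $A$; by Corollary~\ref{cor_order_and_morley}(ii)(c) this is equivalent to $\cl_\frak p(a)\subsetneq\cl_\frak p(b)$. Corollary~\ref{cor_order_and_morley}(i) (equivalence with the purely class-level conditions (b) and (c) there) shows the relation depends only on the $\eps_\frak p$-classes of the representatives, and Proposition~\ref{P_degenerated}(ii) ensures it is a linear order on $\Lin_A(\frak p)$. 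Since the definition never refers to a particular witness, the order $<_{\mathbb L}$ is canonical.

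To see agreement with any witness $<$ of $A$-asymmetric regularity, fix such a $<$ and consider its set-extension to $\Lin_A(\frak p)$. If $\eps_\frak p(a)<_{\mathbb L}\eps_\frak p(b)$, then for arbitrary representatives $a'\in\eps_\frak p(a)$ and $b'\in\eps_\frak p(b)$ the pair $(a',b')$ is still a Morley sequence (class-invariance); Corollary~\ref{cor_order_and_morley}(i), condition (d), then yields $a'<b'$, which is exactly $\eps_\frak p(a)<\eps_\frak p(b)$ in the set-extended sense. Conversely, any two distinct classes are comparable in $<_{\mathbb L}$ by linearity, so if one direction fails the other must hold, and the set-extended witness order picks out the same direction by the argument just given. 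This establishes the isomorphism of $\mathbb L_A(\frak p)$ with $(\Lin_A(\frak p),<)$.

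The only point that requires a moment's care is the class-invariance of Morley-ness used in the previous paragraph: one must notice that conditions (b)--(e) of Corollary~\ref{cor_order_and_morley}(i) reformulate the property ``$(a_1,a_2)$ is Morley'' entirely at the class level, so it lifts cleanly to a well-defined relation on $\Lin_A(\frak p)$ that is blind both to representatives and to the choice of witness. Everything else is bookkeeping that transfers the Section~\ref{X_closure} machinery through Theorem~\ref{T_regular_implies_totdeg}.
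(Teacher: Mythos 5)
Your proposal is correct and does exactly what the paper intends: the corollary has no written proof in the paper (it is introduced as an "important corollary of Theorem \ref{T_cl_implies_order} and Proposition \ref{P_totdeg_cl_iplies_invariant}" to be used "without specific mentioning"), and the intended argument is precisely the specialization of Proposition~\ref{P_totdeg_cl_iplies_invariant}(iv) for part~(i) and the class-level reformulation via Corollary~\ref{cor_order_and_morley} and Proposition~\ref{P_degenerated}(ii) for part~(ii), which is what you supply. You correctly note the one non-automatic point, that Morley-ness of a pair depends only on $\eps_\frak p$-classes, and that the equivalences (a)--(e) in Corollary~\ref{cor_order_and_morley}(i) hold relative to \emph{any} witness, so the induced order on $\Lin_A(\frak p)$ is witness-independent.
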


We recall the definition of strongly regular types from \cite{PT}.  

\begin{defn}   We say that  $(\mathfrak p(x),\phi(x))$ is \emph{strongly regular} if for some small $A$ over which $\frak p$ is invariant, $\phi(x)\in\mathfrak p_{\strok A}$ and: 
\begin{center}
for all $B\supseteq A$ and $a\in\phi(\Mon)$: \ either \ $a\models \mathfrak p_{\strok B}$ \ or \ $\mathfrak p_{\strok B}\forces \mathfrak p_{\strok Ba}$ holds.
\end{center}
$\mathfrak p$ is {\em strongly regular} if there exists  $\phi(x)\in\mathfrak p$ such that $(\mathfrak p(x),\phi(x))$ is strongly regular. 
\end{defn}

Here are  some examples of regular and strongly regular types. They are based on well-known structures which have certain elimination of quantifiers, so the details are left to the reader.

\begin{exm}(1) Let $(\Mon,<)$  be a dense linear order without end points and let $\mathfrak p$ be the type of an infinitely large element. Then $(\mathfrak p(x), x=x)$ is strongly regular,  $\mathfrak p$ is definable and $\emptyset$-asymmetric  witnessed by $<$. Furthermore, if $q\in S_1(M)$ then $q$ has two $M$-invariant globalizations: $\frak q_l$ which contains $\{x<a\,|\,a\models q\}$, and $\frak q_r$ which contains $\{x>a\,|\,a\models q\}$. Both $(\frak q_l,x=x)$ and $(\frak q_r,x=x)$ are strongly regular. 

\smallskip
(2) There are  examples similar to (1) with discrete orders. Let $(\Mon,<)$ be a monster of $(\mathbb N,<)$. Consider the type $q\in  S_1(\mathbb N)$ of an infinitely large element. It has two $\mathbb N$-invariant globalizations: $\frak q_l$ which is the coheir of $q$ and $\frak q_r$ which is the heir. Both $(\frak q_l,x=x)$ and $(\frak q_r,x=x)$ are strongly regular. 

\smallskip
(3) Let $(\Mon,+,<,0)$ be the monster model of the ordered group of rationales and let $\mathfrak p$ be the type of an infinitely large element. Then $(\mathfrak p,x=x)$ is strongly regular,  $\mathfrak p$ is definable and $\emptyset$-asymmetric  witnessed by $<$. 

\smallskip
(4) In \cite{PT} it is shown that `generic' types of  minimal and quasi-minimal groups are definable and that their global heirs are strongly regular. Example 5.1 there describes a quasiminimal field whose generic heir  is $\emptyset$-invariant. Examples of minimal groups with $\emptyset$-asymmetric global heir can be found in  \cite{KTW} and \cite{GK}.  
\end{exm}

\begin{defn}\label{D_convextype}
A weakly regular $A$-asymmetric type $\frak p$ is {\em convex over $A$} if there is   $\leq$ witnessing the $A$-asymmetric regularity   such that $\frak p_{\strok A}(\Mon)$ is a $\leq$-convex subset of $\Mon$. 
\end{defn}

The following example shows that non-convex regular types  exist.

\begin{exm}\label{Ex_nonconvex}Let $\mathcal L=\{<\}\cup \{P_i\,|\,i\in\omega\}$ where each $P_i$ is unary and $<$ is binary. Consider $(\mathbb Q,<,P_i)_{i\in \omega}$ where each $P_i$'s partition the universe into everywhere dense pieces, and let $(\Mon,<,P_i)_{i\leq n}$ be the monster model. Let $\mathfrak p_n$ be the type of an infinitely large element satisfying $P_n(x)$. Then  $(\mathfrak p_n(x),x=x)$ is  strongly regular, and  $\frak p_n$ is $\emptyset$-asymmetric and definable. The locus of the restriction of   $\mathfrak p_n$  to $\emptyset$ is not $<$-convex.  However, we can modify $<$ by restricting it to $P_n(\Mon)$ so that the locus becomes convex. Hence $\frak p_n$ is convex which is, by Lemma \ref{L_SR_is_convex},   the case with any strongly regular type.

Now, let let $\mathfrak p$ be the type of an infinitely large element containing $\{\neg P_i(x)\,|\,i\in\omega\}$. Then $\mathfrak p$ is regular over $\emptyset$ and asymmetric  (witnessed by $<$).   $ \mathfrak p_{\strok \emptyset} (\Mon)$ is not convex with respect to any partial order which orders Morley sequences in $\mathfrak p$ over $\emptyset$ increasingly. 
 \end{exm}

In the next lemma we prove that asymmetric strongly regular types are convex, which  is a significant property used in almost all the results further in the text. Actually, in most of the results  the convexity assumption will suffice.   

\begin{lem}\label{L_SR_is_convex}
If $(\mathfrak p(x),\phi(x))$ is strongly regular and  $A$-asymmetric then $\frak p$ is convex over $A$. 
\end{lem}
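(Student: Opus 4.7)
The plan is to adapt the proof of Theorem~\ref{T_cl_implies_order}(i), using strong regularity to replace the formula $\theta(x)\in\frak p_{\strok A}$ obtained there by compactness with the given $\phi(x)$, and then to invoke strong regularity a second time to secure convexity. First note that strong regularity implies weak regularity over $A$: any $a\models\frak p_{\strok A}$ satisfies $\phi(x)$, so the strong-regularity instance yields the weak-regularity one. Hence Theorem~\ref{T_regular_implies_totdeg} applies. Fix a Morley pair $(a,b)$ in $\frak p$ over $A$ with $\tp(a,b/A)\neq\tp(b,a/A)$ and, as in the proof of Theorem~\ref{T_cl_implies_order}(i), an asymmetric formula $\psi(x,y)\in\tp(a,b/A)$ with $\psi(a,y)\in\frak p$ and $\psi(x,b)\notin\frak p$; by $A$-invariance also $\psi(x,a)\notin\frak p$.

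The central step is to prove
\[
\phi(x)\wedge\psi(x,a)\;\forces\;\psi(x,b).
\]
Given $d\in\phi(\Mon)$ with $\models\psi(d,a)$, apply strong regularity at $B=A$. If $d\models\frak p_{\strok A}$, the argument from the proof of~\ref{T_cl_implies_order}(i) applies verbatim: $d\in\cl_\frak p(a)$, so by transitivity of $\cl_\frak p$ together with $b\notin\cl_\frak p(a)$ we get $b\notin\cl_\frak p(d)$, whence $(d,b)$ is Morley in $\frak p$ over $A$ and $\models\psi(d,b)$. If instead $\frak p_{\strok A}\forces\frak p_{\strok Ad}$, then both $a$ and $b$ realize $\frak p_{\strok Ad}$, so they share the same type over $Ad$; since $\psi$ has parameters only in $A$, $\models\psi(d,a)$ forces $\models\psi(d,b)$. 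This second case is exactly where strong regularity does work beyond weak regularity, and is the only real obstacle in the argument.

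Consequently the $A$-definable formula
\[
\chi(u,v):=\forall x\bigl(\phi(x)\wedge\psi(x,u)\to\psi(x,v)\bigr)
\]
is reflexive and transitive, and $\chi(a,b)$ holds for every Morley pair $(a,b)$ in $\frak p$ over $A$. Define $u\leq v$ iff $u=v$, or $u,v\in\phi(\Mon)$ with $\chi(u,v)$ and $\neg\chi(v,u)$; routine checks give an $A$-definable partial order. For Morley $(a,b)$ the instance $x=a\in\phi(\Mon)$ in $\chi(b,a)$, together with $\models\psi(a,b)$ and the asymmetry of $\psi$, rules out $\chi(b,a)$, so $a<b$. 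Thus $\leq$ witnesses $A$-asymmetric regularity.

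For convexity, suppose $x\leq y\leq z$ with $x,z\models\frak p_{\strok A}$; the nontrivial case $x<y<z$ forces $y\in\phi(\Mon)$ and in particular $\chi(x,y)\wedge\neg\chi(z,y)$. Assuming $y\not\models\frak p_{\strok A}$ for contradiction, strong regularity at $B=A$ gives $\frak p_{\strok A}\forces\frak p_{\strok Ay}$, whence $\tp(x/Ay)=\tp(z/Ay)$; as $\chi(\cdot,y)$ is a formula over $Ay$, we conclude $\chi(x,y)\leftrightarrow\chi(z,y)$, contradicting $\neg\chi(z,y)$. Hence $y\models\frak p_{\strok A}$, so $\frak p_{\strok A}(\Mon)$ is $\leq$-convex and $\frak p$ is convex over $A$.
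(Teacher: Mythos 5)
Your proof is correct, but it takes a genuinely different and heavier route than the paper's.

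The paper's argument is short: take \emph{any} $A$-definable witnessing order $<$ (already supplied by Theorem~\ref{T_regular_implies_totdeg}), restrict it to $\phi(\Mon)$, and prove that this restriction is convex. For the convexity check with $a<b<c$, $a,c\models\frak p_{\strok A}$, the paper picks $d\models\frak p_{\strok Abc}$ to obtain $b<x\in\frak p_{\strok Ab}$, and then from $b\nmodels\frak p_{\strok A}$ strong regularity yields $\frak p_{\strok A}\vdash\frak p_{\strok Ab}$, so $a\models\frak p_{\strok Ab}$ forces $b<a$, a contradiction. You instead rebuild a witnessing order from scratch: you rerun the argument of Theorem~\ref{T_cl_implies_order}(i) and use strong regularity to upgrade the compactness-produced formula $\theta(x)\in\frak p_{\strok A}$ to the actual strong-regularity formula $\phi(x)$ (your case split on $d\in\phi(\Mon)\setminus\frak p_{\strok A}(\Mon)$ is exactly where this bites), and then you derive convexity from the definability of $\chi(\cdot,y)$ over $Ay$ together with a second application of strong regularity. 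Both arguments hinge on the same key fact, that $y\in\phi(\Mon)\smallsetminus\frak p_{\strok A}(\Mon)$ forces $\frak p_{\strok A}\vdash\frak p_{\strok Ay}$, so your proof buys nothing extra, and it is longer since it re-proves what Theorem~\ref{T_regular_implies_totdeg}(i) already provides. Still, it is self-contained and clean, and it makes explicit that strong regularity lets one take $\theta=\phi$ in the order construction, which the paper's restriction trick achieves more economically.
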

\begin{proof}
Let $<$ be (any) witness of asymmetric regularity. Replace it by its restriction to  $\phi(\Mon)$.  We will prove that after this modification $\frak p_{\strok A}(\Mon)$ is a  convex subset of $\Mon$.  Suppose that $a,c\models \frak p_{\strok A}$ and $a<b<c$; we will prove $b\models \frak p_{\strok A}$. Clearly $b\in\phi(\Mon)$. Take $d\models\mathfrak p_{\strok Abc}$. Then $a<b<c<d$ and hence $b<x\in\mathfrak p_{\strok Ab}$. If we assume that $b\nmodels \frak p_{\strok A}$, from the strong regularity we have that $\frak p_{\strok A}\vdash\mathfrak p_{\strok Ab}$. Since $a\models \frak p_{\strok A}$, we get that $a\models \mathfrak p_{\strok Ab}$  and thus $b<a$. A contradiction. Hence $b\models \frak p_{\strok A}$.
\end{proof}

\section{$\eps_\frak p$-classes of weakly regular types}\label{X_symm_closure}

\begin{assu}Throughout the section we fix a weakly regular, $A$-asymmetric type $\frak p$ and $<$ witnessing the asymmetric regularity. By $p$ we denote $\frak p_{\strok A}$.
\end{assu} 
We are interested in closer descriptions of  $\eps_\frak p(a)$ when $a\models p$. The first observation is that $\eps_\frak p(a)$ is relatively $\bigvee$-definable over $Aa$ within $p(\Mon)^2$. Indeed, for $a,b\models p$ we have that $a\notin \eps_\frak p(b)$ holds if and only if at least one of $(a,b)$ and $(b,a)$ is a Morley sequence in $\frak p$ over $A$.  Since `being a Morley sequence in $\frak p$ over $A$' is   type-definable   over $A$, so is $\{(a,b)\,|\,a\notin\eps_\frak p(b)\}$. Therefore the complement is relatively $\bigvee$-definable and is defined by an infinite disjunction. We will prove in Corollary \ref{C_scl_rel_definable_reg} that the disjuncts can be taken in a specific form. That form will be adequate for applying the Omitting Types Theorem in Section \ref{X_real_invariants}.  

\begin{defn}\label{D_pbounded} 
\begin{enumerate}[(i)]

\item $D\subset \Mon$ is {\em $\frak p$-bounded over $A$} if it meets $p(\Mon)$ and there are $a,b$ realizing $p$  such that \  $a<'D\cap \frak p(\Mon) <'b$ \ holds  for some $<'$ witnessing $A$-asymmetric regularity of $\frak p$. 

\item $\psi(x)$ is {\em $\frak p$-bounded over $A$}  if  $\psi(\Mon)$ is $\frak p$-bounded over $A$. 

\item $\psi(x)$ is  {\em strongly $\frak p$-bounded over $A$ with respect to $<$} if it is consistent with $p(x)$ and there are   $a,b$ realizing $p$  such that \  $a<\psi(\Mon) <b$. 
\end{enumerate}
\end{defn}

\begin{rmk}
(i) It is easy to see that $D$ is $\frak p$-bounded if and only if $\pi_\frak p(D)$ is bounded from both sides in $\Lin_A(\frak p)$: if \  $a<'D\cap \frak p(\Mon) <'b$ \ and we choose $a',b'$ realizing $p$ such that $\eps_\frak p(a')<\eps_\frak p(a)$ and $\eps_\frak p(b)<\eps_\frak p(b')$, then $\eps_\frak p(a')$ and $\eps_\frak p(b')$ are bounds for $\pi_\frak p(D)$. 
 Since the order on $\Lin_A(\frak p)$ does not depend on the  particular choice 
 of the ordering relation witnessing $A$-asymmetric weak regularity, neither does $\frak p$-boundedness of $D$. 

\smallskip (ii) Strong boundedness may depend on the choice of $<'$. However in Lemma \ref{L_p_bounded_fla}(i) we will see that a minor, definable correction of any ordering  may witness strong boundedness. Usually the meaning of $<'$ will be clear from the context and will be omitted. For example, in this section  we will consider only strong boundedness w.r. to our fixed $<$ and we will simply say that a formula is strongly $\frak p$-bounded.
\end{rmk} 

\begin{lem}\label{L_p_bounded_fla} Suppose that  $\theta(x,\bar y)$ is over $A$, that $\theta(x,\bar b)$ is $\frak p$-bounded over $A$, and that $c,c'$ realizing $p$ are such that $c<\theta(\Mon,\bar b)\cap p(\Mon)<c'$.  
\begin{enumerate}[(i)]
\item There exists $\phi(x)\in p(x)$  such that \ $c<\phi(\Mon) \cap \theta(\Mon,\bar b)<c'$. In particular $\phi(x)\wedge\theta(x,\bar b)$ is strongly $\frak p$-bounded (w.r. to $<$). 

\item 
Moreover, if $\frak p$ is  convex over $A$ and $a\models p$   satisfies $\theta(x,\bar b)$, then $\phi(x)\in p(x)$ can be chosen such that $\phi(x)\wedge\theta(x,\bar b)$ witnesses $a\in\Sem_A(\bar b)$.
\end{enumerate}
\end{lem}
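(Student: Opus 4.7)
For part (i), the strategy is a direct compactness argument. The hypothesis $c < \theta(\Mon, \bar b) \cap p(\Mon) < c'$ translates into the inconsistency of
$$p(x) \cup \{\theta(x, \bar b),\ x \leq c \vee c' \leq x\},$$
since any realization would be a solution of $p(x)$ in $\theta(\Mon, \bar b)$ lying outside the interval $(c, c')$. By compactness together with closure of the complete type $p$ under finite conjunctions, I obtain a single $\phi(x) \in p(x)$ such that $\phi(x) \wedge \theta(x, \bar b) \vdash c < x < c'$, which is precisely the conclusion $c < \phi(\Mon) \cap \theta(\Mon, \bar b) < c'$. For the ``in particular'' clause, consistency of $\phi(x) \wedge \theta(x, \bar b)$ with $p(x)$ is automatic: $\frak p$-boundedness of $\theta(x, \bar b)$ gives $\theta(\Mon, \bar b) \cap p(\Mon) \neq \emptyset$, and any element of this intersection already satisfies $\phi \in p$. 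Combined with $c, c' \models p$, this exhibits $\phi(x) \wedge \theta(x, \bar b)$ as strongly $\frak p$-bounded with respect to $<$.

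For part (ii), I combine (i) with the convexity hypothesis. Since $\frak p$ is convex over $A$, some ordering $<'$ witnesses both the asymmetric regularity of $\frak p$ and the $<'$-convexity of $p(\Mon)$. The proof of (i) is purely compactness and applies verbatim with $<'$ in place of $<$: $<'$-bounds $c, c' \models p$ for $\theta(\Mon, \bar b) \cap p(\Mon)$ are supplied by the witness-independence of $\frak p$-boundedness (Remark~\ref{D_pbounded}). This yields $\phi(x) \in p(x)$ with $\phi(x) \wedge \theta(x, \bar b) \vdash c <' x <' c'$. Convexity of $p(\Mon)$ together with $c, c' \models p$ then forces every element of the $<'$-interval $(c, c')$ to lie in $p(\Mon)$, so $\phi(x) \wedge \theta(x, \bar b) \vdash p(x)$. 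Since $a \models p$ satisfies both $\phi$ (as $\phi \in p$) and $\theta(x, \bar b)$, the formula $\phi(x) \wedge \theta(x, \bar y)$ lies in $\tp(a, \bar b / A)$ and hence semi-isolates $a$ over $A$ via $\bar b$.

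The argument is essentially mechanical: (i) is a textbook compactness step and (ii) is a single application of convexity, which converts a two-sided $p$-bound into $p$-entailment. The only point needing mild care is that the fixed $<$ of the section may not coincide with a convexity-witnessing $<'$; since the compactness step of (i) is indifferent to which order witnesses asymmetric regularity, this is handled simply by re-running (i) with the convex order.
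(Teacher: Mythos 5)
Your proof is correct and takes essentially the same compactness-then-convexity route as the paper, and you are right to make explicit (what the paper leaves implicit) that re-running the part~(i) argument with a convexity-witnessing order $<'$ requires fresh $<'$-bounds, supplied by the witness-independence of $\frak p$-boundedness. One very small imprecision: since $<$ is only a partial order, ``lying outside the interval $(c,c')$'' should be expressed as $\neg(c<x \wedge x<c')$ rather than $x\leq c \vee c'\leq x$, but this does not affect the compactness step or its conclusion.
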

\begin{proof} (i) \ Since $\theta(x,\bar b)$ is $\frak p$-bounded we have: \
 $p\cup\{\theta(x,\bar b)\}\vdash c<x<c'$. By compactness there is $\phi(x)\in p(x)$ such that \ $\models (\phi(x)\wedge\theta(x,\bar b))\Rightarrow c<x<c'$.

\smallskip (ii) Choose $<'$ such that $p(\Mon)$ is $<'$-convex in $\Mon$.  By the proof of  part (i), applied  to $<'$ and $\theta(x,\bar b)$, we have $\models (\phi(x)\wedge\theta(x,\bar b))\Rightarrow c<'x<'c'$. Since $p$ s $<'$-convex that implies \ $\phi(x)\wedge\theta(x,\bar b)\forces p(x)$. Hence $\phi(x)\wedge\theta(x,\bar b)$  witnesses $a\in\Sem_A(\bar b)$.  
\end{proof}

We now turn to   $\frak p$-bounded formulas $\sigma(a,x)$ when $a\models p$. 

 \begin{lem}\label{L_str_bndd_morley} Suppose that $\sigma(x,y)$ is over $A$,    $a\models p$, and that $\sigma(a,x)$ is consistent with $p(x)$. Then  the following conditions are equivalent:
\begin{enumerate}[(1)]
\item $\sigma(a,x)$ is strongly $\frak p$-bounded over $A$.

\item   $c_1<\sigma(a,\Mon)<c_2$ holds for all Morley sequences $(c_1,a,c_2)$ (in $\frak p$ over $A$).
\end{enumerate}  
\end{lem}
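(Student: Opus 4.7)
The plan is to dispatch $(2)\Rightarrow(1)$ by exhibiting a single Morley sequence of the required shape, and to prove the substantive direction $(1)\Rightarrow(2)$ by a weak-regularity dichotomy on $b_2$ (and symmetrically on $b_1$).

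For $(2)\Rightarrow(1)$: I first produce a Morley sequence $(c_1,a,c_2)$ in $\frak p$ over $A$ with the given $a$ in the middle. The partial type $\{\phi(x,a):\phi(x,y)\in(\frak p\otimes\frak p)_{\strok A}\}$ over $Aa$ is consistent, since any realization $(c_1',a')$ of the complete $2$-type $(\frak p\otimes\frak p)_{\strok A}$ satisfies $a'\equiv a\,(A)$, so an $A$-automorphism sending $a'\mapsto a$ transports $c_1'$ to some $c_1$ with $(c_1,a)$ Morley; any $c_2\models\frak p_{\strok Ac_1a}$ then completes the Morley triple. Applying (2) yields $c_1<\sigma(a,\Mon)<c_2$, so $c_1,c_2\models p$ witness that $\sigma(a,x)$ is strongly $\frak p$-bounded.

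For $(1)\Rightarrow(2)$: Fix $b_1,b_2\models p$ with $b_1<\sigma(a,\Mon)<b_2$, and let $(c_1,a,c_2)$ be any Morley sequence in $\frak p$ over $A$. I prove $\sigma(a,\Mon)<c_2$; the symmetric inequality $c_1<\sigma(a,\Mon)$ follows by a mirror argument using the $A$-formula $\forall x(\sigma(y_2,x)\to y_1<x)$ and the analogous dichotomy applied to the pair $(b_1,a)$ in place of $(a,b_2)$. Apply weak regularity of $\frak p$ over $A$ with $X=\{a\}$ and $a'=b_2$: either $b_2\models\frak p_{\strok Aa}$ or $b_2\in\cl_\frak p(a)$. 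In the first case, both $(a,b_2)$ and $(a,c_2)$ realize $(\frak p\otimes\frak p)_{\strok A}$, hence $\tp(a,b_2/A)=\tp(a,c_2/A)$; the $A$-formula $\phi(y_1,y_2):=\forall x(\sigma(y_1,x)\to x<y_2)$ holds at $(a,b_2)$---this is exactly $\sigma(a,\Mon)<b_2$---and therefore at $(a,c_2)$, yielding $\sigma(a,\Mon)<c_2$. In the second case, by Proposition \ref{P_degenerated}(iii) (applied to the restriction of $\cl_\frak p$ to $p(\Mon)$) we have $\eps_\frak p(b_2)\leq\eps_\frak p(a)<\eps_\frak p(c_2)$, so by Corollary \ref{cor_order_and_morley} the pair $(b_2,c_2)$ is a Morley sequence in $\frak p$ over $A$, and Theorem \ref{T_regular_implies_totdeg}(i) gives $b_2<c_2$ in $\Mon$; hence $\sigma(a,\Mon)<b_2<c_2$.

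No serious obstacle arises. The conceptual point is that the upper bound $b_2$ on $\sigma(a,\Mon)$ either transfers directly to $c_2$ via the type-equality of Morley pairs (when $b_2$ is generic over $a$), or is dominated by $c_2$ in $<$ via the canonical order on $\eps_\frak p$-classes (when $b_2$ lies in $\cl_\frak p(a)$); organising the argument along this dichotomy is the only step requiring thought.
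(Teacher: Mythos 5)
Your proof is correct, but it takes a noticeably different route from the paper's in the substantive direction $(1)\Rightarrow(2)$. The paper fixes witnesses $c<\sigma(a,\Mon)<c'$ to strong boundedness, extends them to $c_1,c_2$ with $(c_1,c)$ and $(c',c_2)$ Morley, asserts (without detail) that $(c_1,a,c_2)$ is a Morley sequence, and then propagates the bounds to every Morley triple through $a$ by $A$-indiscernibility. You instead verify the bounds for an \emph{arbitrary} Morley triple $(c_1,a,c_2)$ directly, splitting on whether $b_2\models\frak p_{\strok Aa}$: if so, $(a,b_2)$ and $(a,c_2)$ both realize $(\frak p\otimes\frak p)_{\strok A}$, so the $A$-formula expressing $\sigma(a,\Mon)<b_2$ transfers to $(a,c_2)$; if not, $\eps_\frak p(b_2)\le\eps_\frak p(a)<\eps_\frak p(c_2)$ makes $(b_2,c_2)$ a Morley pair, hence $b_2<c_2$, and transitivity of $<$ finishes. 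This makes explicit what the paper's ``Then $(c_1,a,c_2)$ is a Morley sequence'' leaves tacit (that claim needs $\eps_\frak p(c)\le\eps_\frak p(a)\le\eps_\frak p(c')$, which itself requires a short indiscernibility argument), so your version is longer but arguably more careful. Two small remarks: the dichotomy ``$b_2\models\frak p_{\strok Aa}$ or $b_2\in\cl_\frak p(a)$'' is tautological from the definition of $\cl_\frak p$ and does not itself need weak regularity---weak regularity enters because it makes $\cl_\frak p$ a totally degenerated closure operator, which gives the linear $\eps$-class order that your second case uses; and the mirror argument for $c_1<\sigma(a,\Mon)$ works as intended provided the dichotomy is taken on $a$ relative to $b_1$ (whether $a\models\frak p_{\strok Ab_1}$), which is the correct reading of ``applied to the pair $(b_1,a)$''.
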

\begin{proof}  (2)\,$\Rightarrow$\,(1) follows immediately from the definition. To prove the other direction, assume that $\sigma(a,x)$ is strongly $\frak p$-bounded over $A$. Let $c,c'$ realize $p$ be such that \  $c <\sigma(a,\Mon)<c'\,$. Choose $c_1,c_2$ realizing $p$ such that $(c_1,c)$ and $(c',c_2)$ are Morley sequences. Then
  $(c_1,a,c_2)$ is a Morley sequence   in $\frak p$ over $A$ for and   \   $c_1<\sigma(a,\Mon)<c_2$\,.  
Since that holds for one Morley sequence $(c_1,a,c_2)$ it holds for all. The proof of the lemma is  complete.
\end{proof}

\begin{cor}\label{C_strong_subset_scl}
Suppose that $a\models p$ and that $\sigma(a,x)$ is strongly $\frak p$-bounded over $A$. Then 
$\sigma(a,\Mon)\cap p(\Mon)\subseteq \eps_\frak p(a)$. 
\end{cor}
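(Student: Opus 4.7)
The approach is to argue by contraposition: suppose $b\in p(\Mon)$ satisfies $\sigma(a,b)$ but $b\notin\eps_\frak p(a)$, and derive a contradiction. By the very definition of $\eps_\frak p(a)$, failing to be in $\eps_\frak p(a)$ means that one of $(a,b)$ or $(b,a)$ is a Morley sequence in $\frak p$ over $A$. In each case I plan to extend the pair to a three-element Morley sequence of the form $(c_1,a,c_2)$ in which $b$ itself serves as one of the endpoints, and then apply Lemma \ref{L_str_bndd_morley} to conclude the impossibility $b<b$.

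In the first case, $(a,b)$ is a Morley sequence, and the task is to find $c_1$ with $(c_1,a,b)$ Morley. I would start from any Morley triple $(c',a',b')$ in $\frak p$ over $A$, which exists since $\frak p$ is a global non-algebraic $A$-invariant type. Because Morley sequences in an $A$-invariant type are indiscernible over $A$, we have $\tp(a',b'/A)=\tp(a,b/A)$, so some $A$-automorphism of $\Mon$ sends $(a',b')$ to $(a,b)$; let $c_1$ be the image of $c'$ under this automorphism. Then $(c_1,a,b)$ is a Morley sequence in $\frak p$ over $A$, so Lemma \ref{L_str_bndd_morley} applied with $c_2:=b$ gives $c_1<\sigma(a,\Mon)<b$; combined with $\sigma(a,b)$ this forces $b<b$, a contradiction.

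In the second case $(b,a)$ is a Morley sequence, and extending on the right is immediate: pick $c_2\models\frak p_{\strok Aba}$, so that $(b,a,c_2)$ is Morley. Lemma \ref{L_str_bndd_morley} applied with $c_1:=b$ yields $b<\sigma(a,\Mon)<c_2$, and once again $\sigma(a,b)$ forces $b<b$. The only mildly technical point in the whole argument is the left extension used in Case 1, and even that is a routine consequence of the indiscernibility of Morley sequences in $A$-invariant types; so I do not anticipate any real obstacle.
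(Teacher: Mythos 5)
Your proof is correct and matches the paper's argument: the paper likewise reasons that any $b\in\sigma(a,\Mon)\cap p(\Mon)$ outside $\eps_\frak p(a)$ would sit as an endpoint of some Morley sequence $(c_1,a,c_2)$, and then Lemma \ref{L_str_bndd_morley} forces the absurdity $b<b$. You have simply spelled out the routine left-extension step (via an $A$-automorphism of a Morley triple) that the paper leaves implicit.
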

\begin{proof}
By Lemma  \ref{L_str_bndd_morley} $\sigma(a,\Mon)\cap p(\Mon)$ cannot contain an element $a'$ such that at least one of $(a,a')$ and $(a',a)$ is a Morley sequence; otherwise, $a'$ would be a member of some Morley  sequence $(c_1,a,c_2)$.  Therefore  $\sigma(a,\Mon)\cap p(\Mon)$ contains only elements of $\eps_\frak p(a)$. 
\end{proof}

The next lemma emphasises the role of semi-isolation for convex types.

\begin{lem}\label{Lscl_rel_definable}   Suppose that  $a\models p$.  
\begin{enumerate}[(i)]
\item For all  $b\in\eps_\frak p(a)$  there is a symmetric formula $\sigma(x,y)\in\tp(a,b/A)$ such that $\sigma(a,y)$ is strongly $\frak p$-bounded over $A$.  

\item 
If $p(\Mon)$ is $<$-convex then  $ \eps_\frak p(a)\subseteq\Sem_A(a)$.

\item 
If $p(\Mon)$ is $<$-convex in $\Mon$ and $b\in\eps_\frak p(a)$ then there is a symmetric formula $\sigma(x,y)\in\tp(a,b/A)$ witnessing $a\in \Sem_A(b)$ and $b\in \Sem_A(a)$,  such that $\sigma(a,y)$ is strongly $\frak p$-bounded over $A$.
\end{enumerate}
\end{lem}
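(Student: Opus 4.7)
The plan for (i) is to combine the $\bigvee$-definability of the symmetric relation $\eps_\frak p$ over $A$ on $p(\Mon)^2$, observed at the start of the section, with Lemma \ref{L_p_bounded_fla}(i). Each defining disjunct $\Psi(x,y)$ over $A$ can be replaced by $\Psi(x,y)\vee\Psi(y,x)$ without changing the union, so the decomposition can be taken over symmetric formulas. I would pick such a symmetric $\Psi$ with $\models\Psi(a,b)$ and $\Psi(\Mon^2)\cap p(\Mon)^2\subseteq\eps_\frak p$. Then for any Morley sequence $(c_1,a,c_2)$ in $\frak p$ over $A$, Corollary \ref{cor_order_and_morley} gives $c_1<\eps_\frak p(a)<c_2$, whence $\Psi(a,\Mon)\cap p(\Mon)\subseteq\eps_\frak p(a)\subseteq(c_1,c_2)$: the formula $\Psi(a,y)$ is $\frak p$-bounded over $A$. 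Applying Lemma \ref{L_p_bounded_fla}(i) with $\theta(y,x):=\Psi(x,y)$ and $\bar b:=a$ yields $\phi(y)\in p(y)$ over $A$ with $\phi(y)\wedge\Psi(a,y)$ strongly $\frak p$-bounded, and I set $\sigma(x,y):=\Psi(x,y)\wedge\phi(x)\wedge\phi(y)$. Since $\phi(a)$ holds, $\sigma(a,y)$ reduces to $\Psi(a,y)\wedge\phi(y)$, which is strongly $\frak p$-bounded; the whole $\sigma$ is symmetric and in $\tp(a,b/A)$.

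For (ii), the plan is to feed (i) into the $<$-convexity of $p(\Mon)$. The $\sigma$ produced in (i) has $\sigma(a,\Mon)\subseteq(c,c')$ for some $c,c'\models p$, and convexity forces $(c,c')\subseteq p(\Mon)$, so $\sigma(a,y)\vdash p(y)$. Combined with $\models\sigma(a,b)$, this shows that $\sigma(a,y)$ witnesses $b\in\Sem_A(a)$, giving $\eps_\frak p(a)\subseteq\Sem_A(a)$.

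For (iii), I would run the construction of (i) twice, once on $(a,b)$ and once on $(b,a)$, exploiting the symmetry $b\in\eps_\frak p(a)\Leftrightarrow a\in\eps_\frak p(b)$. This produces symmetric formulas $\sigma_1,\sigma_2$ both in $\tp(a,b/A)$---recall that a symmetric formula lies in $\tp(a,b/A)$ iff it lies in $\tp(b,a/A)$---with $\sigma_1(a,y)$ and $\sigma_2(b,y)$ strongly $\frak p$-bounded. Their conjunction $\sigma:=\sigma_1\wedge\sigma_2$ is symmetric and in $\tp(a,b/A)$. By the argument of (ii) both $\sigma(a,y)\vdash p(y)$ and $\sigma(b,y)\vdash p(y)$; the second, via the universal symmetry $\sigma(x,y)\leftrightarrow\sigma(y,x)$, translates to $\sigma(x,b)\vdash p(x)$, witnessing $a\in\Sem_A(b)$. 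The slice $\sigma(a,y)\subseteq\sigma_1(a,y)$ inherits strong $\frak p$-boundedness.

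The main technical point is the symmetrization of the formulas in the $\bigvee$-decomposition of $\eps_\frak p$, which reduces to the trivial trick $\Psi\mapsto\Psi(x,y)\vee\Psi(y,x)$ thanks to the intrinsic symmetry of the relation. Everything else is a routine packaging of the compactness bundled into Lemma \ref{L_p_bounded_fla} together with the $<$-convexity hypothesis in (ii) and (iii).
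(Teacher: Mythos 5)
Your proof is correct and reaches the same conclusion via essentially the paper's route: produce a symmetric formula in $\tp(a,b/A)$ whose $a$-slice lands inside $\eps_\frak p(a)$, then invoke Lemma~\ref{L_p_bounded_fla} and, for (ii)--(iii), the $<$-convexity of $p(\Mon)$. The one notable difference is in (i): you cite the relative $\bigvee$-definability of $\eps_\frak p$ observed at the start of the section and symmetrize a disjunct, whereas the paper constructs the disjunct explicitly (taking $\sigma'\in\tp(a,b/A)$ with $\sigma'(a,y)\notin\frak p_{\strok Aa}$ and $\sigma'(x,b)\notin\frak p_{\strok Ab}$, then using $A$-invariance to show $\sigma'\vee(\sigma')^t$ traps $\eps_\frak p(a)$). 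Your packaging is a bit tidier and proves the marginally stronger containment $\Psi(\Mon^2)\cap p(\Mon)^2\subseteq\eps_\frak p$, but the underlying mechanism -- $A$-invariance encoded in the $\bigvee$-decomposition -- is the same, and parts (ii) and (iii) mirror the paper's argument directly.
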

\begin{proof}
(i) Assume that $b\in\eps_\frak p(a)$. Then neither $(a,b)$ nor $(b,a)$ is a Morley sequence in $\frak p$ over $A$, so there is $\sigma'(x,y)\in\tp(a,b/A)$ such that $\sigma'(x,b)\notin\mathfrak p_{\strok Ab}$ and $\sigma'(a,y)\notin\mathfrak p_{\strok Aa}$. Consider the formula $\sigma''(x,y):= \sigma'(x,y)\vee \sigma'(y,x)$. It is symmetric and $\sigma''(x,y)\in\mathrm{tp}(a,b/A)$. From $\sigma'(x,b)\notin\frak p_{\strok Ab}$, by $A$-invariance of $\frak p$, we have $\sigma'(x,a)\notin\frak p_{\strok Aa}$, which together with $\sigma'(a,y)\notin\frak p_{\strok Aa}$ implies $\sigma''(a,y)\notin\frak p_{\strok Aa}$. Similarly, $\sigma''(x,b)\notin\frak p_{\strok Ab}$.

Assume that $b'$ realizes $p$ and $\models\sigma''(a,b')$.  Then $\sigma''(a,y)\notin\mathfrak p_{\strok Aa}$ implies $b'\nmodels \frak p_{\strok Aa}$. Since, by invariance, $\sigma''(x,b')\notin\mathfrak p_{\strok Ab'}$ we conclude $a\nmodels \mathfrak p_{\strok Ab'}$. Therefore, from $b'\nmodels\frak p_{\strok Aa}$ and $a\nmodels\frak p_{\strok Ab'}$ we conclude $b'\in\eps_\frak p(a)$. This proves  $\sigma''(a,\Mon)\cap p(\Mon)\subseteq\eps_\frak p(a)$, so $\sigma''(a,y)$ is $\frak p$-bounded. 

By Lemma \ref{L_p_bounded_fla}(i) we can now choose $\phi(y)\in p(y)$ such that $\phi(y)\wedge\sigma''(a,y)$ is strongly $\frak p$-bounded over $A$. Take $\sigma(x,y):=\phi(x)\wedge\phi(y)\wedge\sigma''(x,y)$. Clearly, $\sigma(x,y)\in\mathrm{tp}(a,b/A)$  is a symmetric formula and $\sigma(a,y)$ is strongly $\frak p$-bounded over $A$.

\smallskip (ii)   If $b\in\eps_\frak p(a)$ then, by part (i), there is a $\frak p$-bounded formula  $\sigma(a,y)\in\tp(b/Aa)$. By  Lemma \ref{L_p_bounded_fla}(ii)  $b\in\Sem_A(a)$. Hence $ \eps_\frak p(a)\subseteq\Sem_A(a)$.

\smallskip (iii) Let $b\in\eps_\frak p(a)$. Then $a\in\eps_\frak p(b)$ and, by parts  (i) and (ii) of the lemma, there are  symmetric formulas $\sigma_1(x,y),\sigma_2(x,y)\in\mathrm{tp}(a,b/A)$, such that $\sigma_1(a,y)$ is strongly $\frak p$-bounded over $A$ witnessing $b\in\Sem_A(a)$ and $\sigma_2(x,b)$ is strongly $\frak p$-bounded over $A$ witnessing $a\in\Sem_A(b)$. Now we can finish the proof of (iii) by taking $\sigma(x,y)= \sigma_1(x,y)\wedge \sigma_2(x,y)$.
\end{proof}

 Let $\mathcal B$ be the set of all symmetric formulas $\sigma(x,y)$ over $A$ such that $\sigma(a,y)$ is strongly $\frak p$-bounded for some (any) $a$ realizing $p$. Actually $\mathcal B$ depends on $\frak p$, $A$ and the choice of $<$,  but their meaning will  always be clear from the context.

\begin{lem}\label{C_scl_rel_definable_reg}     
$\eps_\frak p(a)=\bigcup_{\sigma(x,y)\in \mathcal B}\sigma(a,\Mon)\cap p(\Mon)$ \ \ holds \ \ for all $a\models p$.
\end{lem}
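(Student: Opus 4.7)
The plan is to prove the equality by verifying the two inclusions, both of which follow almost immediately from results already established in this section.

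For the inclusion ``$\supseteq$'', I would fix $\sigma(x,y) \in \mathcal B$ and any $b \in \sigma(a,\Mon) \cap p(\Mon)$. By definition of $\mathcal B$, the formula $\sigma(a',y)$ is strongly $\frak p$-bounded over $A$ for some $a' \models p$. I first need to transfer this property from $a'$ to our fixed $a$: since $a \equiv a'\,(A)$, there is an $A$-automorphism of $\Mon$ sending $a'$ to $a$, and such an automorphism preserves both $p$ and the ternary predicate ``$(c_1, y, c_2)$ is a Morley sequence in $\frak p$ over $A$''. By the characterization in Lemma \ref{L_str_bndd_morley}, strong $\frak p$-boundedness of $\sigma(a',y)$ is equivalent to the Morley-sequence condition, and the latter is $A$-invariant, so $\sigma(a,y)$ is also strongly $\frak p$-bounded. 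Corollary \ref{C_strong_subset_scl} then gives $\sigma(a,\Mon) \cap p(\Mon) \subseteq \eps_\frak p(a)$, whence $b \in \eps_\frak p(a)$.

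For the inclusion ``$\subseteq$'', given $b \in \eps_\frak p(a)$, I would invoke Lemma \ref{Lscl_rel_definable}(i) directly to obtain a symmetric formula $\sigma(x,y) \in \tp(a,b/A)$ such that $\sigma(a,y)$ is strongly $\frak p$-bounded over $A$. This $\sigma$ lies in $\mathcal B$ (the ``some $a$'' witness being our $a$ itself), and $\models \sigma(a,b)$ together with $b \models p$ place $b$ in the right-hand union.

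I do not foresee any real obstacle. The only non-routine step is justifying the ``some (any)'' clause in the definition of $\mathcal B$, which is needed for the $\supseteq$ direction; this is the invariance argument sketched above. Otherwise, the lemma is essentially a bookkeeping statement packaging Lemma \ref{Lscl_rel_definable}(i), which supplies enough formulas to cover $\eps_\frak p(a)$, together with Corollary \ref{C_strong_subset_scl}, which confines each slice $\sigma(a,\Mon) \cap p(\Mon)$ to $\eps_\frak p(a)$, into a single uniform description of $\eps_\frak p(a)$ via the family $\mathcal B$.
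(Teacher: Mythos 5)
Your proof is correct and follows the same two-inclusion structure as the paper: Corollary~\ref{C_strong_subset_scl} for $\supseteq$ and Lemma~\ref{Lscl_rel_definable}(i) for $\subseteq$. The extra care you take over the ``some (any)'' clause in the definition of $\mathcal B$ is appropriate (the paper leaves it implicit); just note that your automorphism argument also uses that the fixed order $<$ is $A$-definable (Theorem~\ref{T_regular_implies_totdeg}(i)), since condition (2) of Lemma~\ref{L_str_bndd_morley} refers to $<$ and not only to the Morley-sequence predicate.
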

\begin{proof}
 If $\sigma(x,y)\in\mathcal B$ then $\sigma(a,y)$ is strongly $\frak p$-bounded over $A$ and, by Corollary \ref{C_strong_subset_scl}, $\sigma(a,\Mon)\cap p(\Mon)\subseteq\eps_\frak p(a)$. Therefore $\bigcup_{\sigma(x,y)\in \mathcal B}\sigma(a,\Mon)\cap p(\Mon)\subseteq \eps_\frak p(a)$.
The other inclusion is a consequence of Lemma \ref{Lscl_rel_definable}(i): any $b\in\eps_\frak p(a)$ satisfies  $\sigma(a,y)$ for some $\sigma(x,y)\in\mathcal B$.
\end{proof}

\section{Realizing invariants}\label{X_real_invariants} 

In this section we study possibilities for $\Inv_{\frak p,A}(M)$ when $M$ is a small model. We will prove Theorem \ref{T_omit_uvod}. Each part of the theorem is proved in a separate proposition: \ref{P_conv_simple_invariants},  \ref{T_nonsimple_anyinv} and \ref{T_simple_nonconvex}.

\begin{defn}
A weakly regular $A$-asymmetric type is {\em simple over $A$} if $x\in\eps_{\mathfrak p}(y)$ is a relatively definable (equivalence) relation on $\mathfrak p_{\strok A}(\Mon)^2$.
\end{defn}

For simple, weakly regular types $\Lin_A(\frak p)$ is type-definable in $\Mon^{eq}$: Suppose that $\frak p$ is simple over $A$ and that $\sigma_\frak p(x,y)$ relatively defines $x\in \eps_{\frak p}(y)$. $\sigma_\frak p$ defines an equivalence relation so  after replacing it by $(\sigma_\frak p(x,y)\wedge\phi(x)\wedge\phi(y))\vee x=y$ for an adequately (by compactness) chosen $\phi(x)\in \frak p$, we may assume that it defines an equivalence relation on the whole of $\Mon$. The classes of this relation corresponding to realizations of $\frak p_{\strok A}$ are precisely the elements of $\Lin_A(\frak p)$, so $\Lin_A(\frak p)$ is  type-definable in $\Mon^{eq}$. By Lemma \ref{Lscl_rel_definable}, we can choose $\sigma_\frak p(x,y)$ such that it is symmetric and $\sigma_\frak p(a,y)$ is strongly $\frak p$-bounded  with respect to $<$ (fixed in advance). 

\begin{prop}\label{P_conv_simple_invariants} Assume that $\frak p$ is a weakly regular,  $A$-asymmetric  type  that is both  convex and  simple over $A$, witnessed by $<$. If  $\Inv_{\mathfrak p, A}(M)$ has at least two elements for some model $M\supseteq A$, then it is a dense linear order (possibly with one or both ends)  
\end{prop}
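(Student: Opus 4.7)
The plan is to establish density of $\Inv_{\frak p, A}(M)$: given two distinct comparable classes in $\Inv_{\frak p, A}(M)$, find a third one strictly between. First I will fix $a, b \in \frak p_{\strok A}(M)$ representing the two given classes; after relabelling, assume $\eps_\frak p(a) < \eps_\frak p(b)$. By Corollary \ref{cor_order_and_morley} this is equivalent to $(a, b)$ being a Morley sequence in $\frak p$ over $A$ and to $a < b$. The target is to exhibit $c \in \frak p_{\strok A}(M)$ with $\eps_\frak p(a) < \eps_\frak p(c) < \eps_\frak p(b)$.

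I will first produce such an element in $\Mon$. Take a Morley triple $(a_1, a_2, a_3)$ in $\frak p$ over $A$, which exists by Remark \ref{R_reg_implies_closure}(ii). Its $A$-indiscernibility gives $\tp(a_1, a_3/A) = \tp(a, b/A)$, so there is $f \in \Aut(\Mon/A)$ with $f(a_1) = a$ and $f(a_3) = b$; put $c_0 := f(a_2)$. Then $(a, c_0, b)$ is a Morley triple in $\frak p$ over $A$, so Corollary \ref{cor_order_and_morley}(ii) gives $\eps_\frak p(a) < \eps_\frak p(c_0) < \eps_\frak p(b)$ and, in particular, $a < c_0 < b$.

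Next I will transfer this to $M$. By simplicity, the relation $x \in \eps_\frak p(y)$ is relatively defined on $\frak p_{\strok A}(\Mon)^2$ by some formula $\sigma_\frak p(x, y)$ over $A$. The element $c_0$ then witnesses in $\Mon$ the first-order sentence
\[
\exists x \bigl(\, a < x < b \,\wedge\, \neg\sigma_\frak p(x, a) \,\wedge\, \neg\sigma_\frak p(x, b)\,\bigr).
\]
Since $M \preceq \Mon$ and $a, b \in M$, the same sentence holds in $M$, yielding some $c \in M$. Convexity of $\frak p$ forces $(a, b) \subseteq \frak p_{\strok A}(\Mon)$, so $c \models \frak p_{\strok A}$; relative definability then translates $\neg\sigma_\frak p(c, a) \wedge \neg\sigma_\frak p(c, b)$ to $c \notin \eps_\frak p(a) \cup \eps_\frak p(b)$. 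Finally, because each $\eps_\frak p$-class is $<$-convex (Proposition \ref{P_totdeg_cl_iplies_invariant}(iv)) and $a < c < b$, Corollary \ref{cor_order_and_morley}(i) rules out $\eps_\frak p(c) < \eps_\frak p(a)$ and $\eps_\frak p(b) < \eps_\frak p(c)$, giving $\eps_\frak p(a) < \eps_\frak p(c) < \eps_\frak p(b)$, as required.

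The main subtlety lies precisely in the interplay of simplicity and convexity at the transfer step: simplicity turns ``belonging to $\eps_\frak p(a)$'' into a first-order condition, which is what lets the existential ``between'' statement be phrased over $M$, while convexity is what guarantees that any $M$-witness of that existential automatically realises $\frak p_{\strok A}$ and therefore represents a legitimate class of $\Inv_{\frak p, A}(M)$. The geometric content, namely the existence of an intermediate Morley element in $\Mon$, is just indiscernibility of Morley triples.
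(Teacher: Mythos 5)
Your argument is correct and follows essentially the same route as the paper: both find an $A$-formula with parameters $a,b$ (resp.\ $b_1,b_2$) that is satisfied by any Morley-middle element, transfer this by elementarity to $M$, and then use convexity of $\frak p_{\strok A}(\Mon)$ to deduce that the $M$-witness realizes $\frak p_{\strok A}$. The only cosmetic difference is in the choice of formula: the paper uses $\sigma_\frak p(b_1,\Mon)<x<\sigma_\frak p(b_2,\Mon)$ with $\sigma_\frak p$ chosen (via Lemma \ref{Lscl_rel_definable}) to be strongly $\frak p$-bounded, which makes the witness immediately Morley-middle, whereas you use the lighter $a<x<b\wedge\neg\sigma_\frak p(x,a)\wedge\neg\sigma_\frak p(x,b)$ with $\sigma_\frak p$ only assumed to relatively define $\eps_\frak p$, and then finish with the order/equivalence correspondence of Corollary \ref{cor_order_and_morley}(i); in fact the invocation of $<$-convexity of the $\eps_\frak p$-classes is not needed there, as the equivalence (c)$\Leftrightarrow$(e) already rules out $\eps_\frak p(c)<\eps_\frak p(a)$ (it would force $c<a$) and symmetrically the other case.
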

\begin{proof} Choose a symmetric formula $\sigma_\frak p(x,y)$ relatively defining $\eps_\frak p$ such that $\sigma_\frak p(a,y)$ is strongly $\frak p$-bounded for any $a\models p$. Assume that $\Inv_{\mathfrak p, A}(M)$ has at least two elements.  Let  $b_1,b_2\in p(M)$ be such that $\eps_\frak p(b_1)<\eps_\frak p(b_2)$. It suffices to show that there is $b\in p(M)$ such that $(b_1,b,b_2)$ is a Morley sequence. Consider the `formula' \ $\sigma_\frak p(b_1,\Mon)< x<\sigma_\frak p(b_2,\Mon)$. It is satisfied in $\Mon$ by any $b'$ for which $(b_1,b',b_2)$ is a Morley sequence; hence it is consistent. Therefore, it is satisfied by some $b\in M$. Then $b_1<b<b_2$ so, by convexity, $b$ realizes  $\frak p_{\strok A}(x)$. Clearly, $(b_1,b,b_2)$ is a Morley sequence of elements of $M$. This completes the proof.
\end{proof}

\begin{lem}\label{non_simple_scl}Suppose that  $T$ and $A$ are countable  and that $\mathfrak p$ is weakly regular and $A$-asymmetric, witnessed by $<$. Denote $\frak p_{\strok A}$   by $p$. 
Assume that $\mathfrak p$ is not simple over $A$ and let $a\models p$. Then there exists a sequence $(\sigma_n(x,y))_{n<\omega}$ of symmetric formulas with parameters from $A$ such that the following conditions hold:
\begin{enumerate}
\item  Each  $\sigma_n(a,x)$ is strongly $\frak p$-bounded.
\item $(\sigma_n(\Mon, a) )_{n<\omega}$ is   strictly increasing under inclusion. 

\item For every $n<\omega$ there are $b',b''\in(\sigma_{n+1}(\Mon,a)\smallsetminus\sigma_n(\Mon,a))\cap p(\Mon)$ such that $b'<\sigma_n(\Mon,a)<b''$. 

\item $\eps_\frak p(a)=\bigcup_{n<\omega}\sigma_n(\Mon,a)\cap p(\Mon)$.
\end{enumerate}
\end{lem}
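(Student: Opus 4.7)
Since $T$ and $A$ are countable, so is the class $\mathcal{B}$ of symmetric $A$-formulas $\sigma(x,y)$ for which $\sigma(a,y)$ is strongly $\frak p$-bounded with respect to $<$; fix an enumeration $\mathcal{B}=(\tau_k)_{k<\omega}$. I first record that $\mathcal B$ is closed under finite disjunctions: if $c_i<\sigma_i(a,\Mon)<c_i'$ with $c_i,c_i'\models p$ witnesses strong boundedness of $\sigma_i(a,y)$ ($i=1,2$), then $\min(c_1,c_2)<(\sigma_1\vee\sigma_2)(a,\Mon)<\max(c_1',c_2')$, with the bounds still realizing $p$, and symmetry is manifestly preserved. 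By Lemma~\ref{C_scl_rel_definable_reg}, $\eps_\frak p(a)=\bigcup_{\sigma\in\mathcal B}\sigma(\Mon,a)\cap p(\Mon)$.

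The construction is inductive and rests on the following \emph{key claim}: for every $\sigma\in\mathcal{B}$ there exist $b^-,b^+\in\eps_\frak p(a)$ with $b^-<\sigma(\Mon,a)<b^+$ (strict against every element of $\sigma(\Mon,a)$). Granted the claim, I set $\sigma_0:=\tau_0$, and at step $n+1$ apply the claim to $\sigma_n$ to obtain $b^\pm$, use Lemma~\ref{Lscl_rel_definable}(i) to select $\tau^\pm\in\mathcal B$ with $b^\pm\in\tau^\pm(\Mon,a)$, and put
\[
\sigma_{n+1}(x,y):=\sigma_n(x,y)\vee\tau_{n+1}(x,y)\vee\tau^+(x,y)\vee\tau^-(x,y).
\]
By closure of $\mathcal B$ under $\vee$ we have $\sigma_{n+1}\in\mathcal B$ (condition (1)); the elements $b^\pm\in p(\Mon)$ lie in the difference $\sigma_{n+1}(\Mon,a)\smallsetminus\sigma_n(\Mon,a)$ strictly below/above $\sigma_n(\Mon,a)$, handling conditions (2) and (3); and since each $\tau_k$ is absorbed at step $k$, condition (4) follows in the limit from $\eps_\frak p(a)=\bigcup_k\tau_k(\Mon,a)\cap p(\Mon)$.

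\textbf{Main obstacle.} The whole weight of the argument falls on the key claim, and this is exactly where non-simplicity of $\frak p$ over $A$ must be used in an essential way---indeed, if $\frak p$ were simple, a single $\sigma\in\mathcal B$ would already have $\sigma(\Mon,a)\cap p(\Mon)=\eps_\frak p(a)$ and no $b^+$ strictly above could possibly lie in $\eps_\frak p(a)$. By symmetry it suffices to produce $b^+$. Suppose, for contradiction, that $\eps_\frak p(a)\subseteq\vartheta(a,\Mon)$, where $\vartheta(x,y):=\exists z\,(\sigma(x,z)\wedge y\leq z)$. The plan is to manufacture from $\vartheta$ a single symmetric $A$-formula in $\mathcal B$ whose trace on $p(\Mon)^2$ equals the graph of $\eps_\frak p$, contradicting non-simplicity: symmetrize to $\vartheta^*(x,y):=\vartheta(x,y)\vee\vartheta(y,x)$, which satisfies $\eps_\frak p(a)\subseteq\vartheta^*(a,\Mon)\cap p(\Mon)$ but fails to be strongly $\frak p$-bounded because $\vartheta(a,y)$ picks up $p$-realizations far below $\eps_\frak p(a)$; then use Lemma~\ref{L_p_bounded_fla}(i) to conjoin with some $\phi(x)\wedge\phi(y)$ for a suitable $\phi\in p$, and invoke the $<$-convexity of $\eps_\frak p(a)$ from Proposition~\ref{P_totdeg_cl_iplies_invariant}(iv) to show that after the trimming the resulting formula lies in $\mathcal B$ and still covers $\eps_\frak p(a)$. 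Combined with the reverse inclusion coming from Corollary~\ref{C_strong_subset_scl}, this produces a single formula in $\mathcal B$ relatively defining $\eps_\frak p$ on $p(\Mon)^2$, the simplicity which is excluded by hypothesis. The delicate technical point is precisely this trimming: one must simultaneously remove the $p$-realizations outside $\eps_\frak p(a)$ introduced by $\vartheta$ and avoid losing any element of $\eps_\frak p(a)$, which requires a careful compactness argument together with the structural facts collected in Sections~\ref{X_closure} and~\ref{X_symm_closure}.
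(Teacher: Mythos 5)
Your overall plan matches the paper's: enumerate $\mathcal B$, form increasing disjunctions, and establish a ``key claim'' providing escapes on both sides of each $\sigma_n$. That claim is essentially the same as the one the paper proves. Where you diverge is in the proof of the key claim, and there your argument has two real gaps.

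First, the hypothesis you take for contradiction is too strong. The negation of ``there is $b^+\in\eps_\frak p(a)$ with $\sigma(\Mon,a)<b^+$'' is that for every $b\in\eps_\frak p(a)$ some $c\in\sigma(a,\Mon)$ satisfies $\neg(c<b)$, and since $<$ is merely a partial order this allows $c\perp b$; it does not entail $b\leq c$. So ``$\eps_\frak p(a)\subseteq\vartheta(a,\Mon)$'' is strictly stronger than the failure of the claim, and refuting it only shows $\eps_\frak p(a)\not\subseteq\vartheta(a,\Mon)$, i.e.\ some $b\in\eps_\frak p(a)$ with $\neg(b\leq c)$ for all $c\in\sigma(a,\Mon)$ --- which is weaker than producing $b^+$ with $\sigma(\Mon,a)<b^+$. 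Second, the ``trimming'' step is not available as described: Lemma~\ref{L_p_bounded_fla}(i) produces a strongly $\frak p$-bounded refinement only from a formula that is already $\frak p$-bounded, whereas your $\vartheta^*(a,y)$ is by construction unbounded below (it contains the entire downward closure of $\sigma(a,\Mon)$) and, via $\vartheta(\Mon,a)$, picks up realizations of $p$ above $\eps_\frak p(a)$ as well. There is no single compactness step that both excises all of those and keeps every point of $\eps_\frak p(a)$; if it existed it would already give that $\eps_\frak p(a)$ is the trace of a single strongly bounded formula, which is exactly simplicity, but you have not derived that from the claim's failure.

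The paper's proof of the key claim is both shorter and avoids these issues by working with the Morley relation rather than with $\eps_\frak p$ directly. If for every $m\geq n$ there is no $b''\in\sigma_m(\Mon,a)\smallsetminus\sigma_n(\Mon,a)$ realizing $p$ with $\sigma_n(\Mon,a)<b''$, then the $A$-formula $\forall t\,(\sigma_n(t,y)\rightarrow t<x)$ relatively defines $\{b\models p:\ (a,b)\text{ is a Morley sequence in }\frak p\text{ over }A\}$ on $p(\Mon)$: one direction is Lemma~\ref{L_str_bndd_morley}, and for the other, $b\models p$ with $\sigma_n(\Mon,a)<b$ cannot lie in $\eps_\frak p(a)$ (else it would be the forbidden escape) nor give a Morley pair $(b,a)$ (that would force $b<\sigma_n(a,\Mon)$). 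Then $\neg(\sigma_n(\Mon,x)<y)\wedge\neg(\sigma_n(\Mon,y)<x)$ relatively defines $x\in\eps_\frak p(y)$, contradicting non-simplicity. You should replace your $\vartheta$-and-trim argument with this; the rest of your construction (closure of $\mathcal B$ under $\vee$, absorbing $\tau_{n+1}$ and the witnesses $\tau^\pm$, and the verification of (1)--(4)) then goes through.
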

\begin{proof} From the countability of $T$ and $A$ we have $\mathcal B=\{\sigma_n(x,y)\mid n<\omega\}$. By replacing each $\sigma_n(x,y)$ by $\bigvee_{i\leq n}\sigma_i(x,y)$    we may assume that $(\sigma_n(x,y))_{n<\omega}$ is a sequence of symmetric formulas over $A$, such that $(\sigma_n(\Mon,a))_{n<\omega}$ is    increasing   and $\eps_\frak p(a)=\bigcup_{n<\omega}\sigma_n(\Mon,a)\cap p(\Mon)$.
We {\em claim} that for every $n<\omega$ there exist $m\geq n$ and $b''\in\sigma_m(\Mon,a)\smallsetminus\sigma_n(\Mon,a)$ such that $b''\models p$ and $\sigma_n(\Mon,a)<b''$. Otherwise $\sigma_n (\Mon,a)<x$ would relatively define  $\frak p_{\strok Aa}$ and  $\frak p$ would be simple over $A$. A contradiction.

Similarly, we can prove that for every $n<\omega$ there exist $m\geq n$ and $b'\in\sigma_m(\Mon,a)\smallsetminus\sigma_n(\Mon,a)$ such that $b'\models p$ and $b'<\sigma_n(\Mon,a)$. Therefore, we can  extract a  subsequence  of  $(\sigma_n(x,y))_{n<\omega}$ satisfying the desired conditions.
\end{proof}

  Keeping  the notation of the lemma  we note that the following   are equivalent for all $a,b\models p$:
 \begin{center}
  (1) $a< \sigma_n(\Mon,b)$, for all $n\in\omega$; \ \ (2)  $\sigma_n(\Mon,a)<b$, for all $n\in\omega$; \ \ (3) $(a,b)$ is a Morley sequence. 
\end{center}
The first condition means $a<\eps_\frak p(b)$ and the second describes $ \eps_\frak p(a)<b$, they are   equivalent to (3).

\begin{prop}\label{T_nonsimple_anyinv}
Suppose that $T$ and $A$ are countable and that $\mathfrak p$ is weakly regular, $A$-asymmetric and non-simple over $A$. Then for every countable,   linear order $\mathbb I$ there exists a countable model $M_{\mathbb I}\supseteq  A$ such that $\Inv_{\mathfrak p,A}(M_{\mathbb I})\cong \mathbb I$.
\end{prop}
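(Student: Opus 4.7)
The plan is to apply the Omitting Types Theorem. Expand the language by a fresh constant $c_i$ for each $i\in\mathbb I$ and a constant for every element of $A$, and let $T^+$ denote $T$ together with the elementary diagram of $A$ and the partial Morley-type expressing that $(c_i)_{i\in\mathbb I}$ is a Morley sequence in $\frak p$ over $A$ of order type $\mathbb I$. Since Morley sequences of any small order type exist in $\Mon$, $T^+$ is consistent; it is countable because $T$, $A$, and $\mathbb I$ are. Using the symmetric formulas $(\sigma_n(x,y))_{n<\omega}$ furnished by Lemma~\ref{non_simple_scl}, consider the partial type
\[
 q(x) \;=\; p(x) \;\cup\; \{\neg\sigma_n(x,c_i) : n<\omega,\ i\in\mathbb I\}.
\]
By Lemma~\ref{non_simple_scl}(4), a realisation of $q$ is exactly a $p$-realisation whose $\eps_\frak p$-class lies outside $\{\eps_\frak p(c_i):i\in\mathbb I\}$.

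Granted the main claim below that $q$ is non-isolated in $T^+$, the Omitting Types Theorem yields a countable $M^+\models T^+$ omitting $q$; its $L$-reduct $M_{\mathbb I}$ is a countable model of $T$ containing $A$, the elements $a_i:=c_i^{M^+}$ form a Morley sequence of order type $\mathbb I$, and every $a\in p(M_{\mathbb I})$ lies in some $\eps_\frak p(a_i)$. By Corollary~\ref{cor_order_and_morley}, $i\mapsto \eps_\frak p(a_i)$ is an order-isomorphism onto the set of $\eps_\frak p$-classes meeting $M_{\mathbb I}$, hence $\Inv_{\frak p,A}(M_{\mathbb I})\cong\mathbb I$.

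The main obstacle is therefore to prove that $q$ is non-isolated in $T^+$. Suppose for contradiction that $\phi(x,c_{i_1},\ldots,c_{i_k},\bar a)$ isolates $q$, with $i_1<_\mathbb{I}\cdots<_\mathbb{I} i_k$ and $\bar a\subset A$. We may assume $\phi\vdash_{T^+}p(x)$ (else some $\theta\in p$ gives $\phi\wedge\neg\theta$ consistent, already contradicting isolation), so every realisation of $\phi$ in any model of $T^+$ realises $p$ and must sit outside every $\eps_\frak p(c_i)$. The plan is to combine the Morley-sequence indiscernibility built into $T^+$ with the bilateral-growth property~(3) of Lemma~\ref{non_simple_scl}: in a sufficiently saturated model of $T^+$, having chosen a realisation $\bar\beta$ of $\bar c=(c_{i_1},\ldots,c_{i_k})$ and a realisation $d$ of $\phi(x,\bar\beta,\bar a)$, the $\eps_\frak p$-class of $d$ sits in some specific gap $G$ determined by $\bar\beta$; by indiscernibility of the Morley sequence one is free to extend $\bar\beta$ to a full $\mathbb I$-indexed Morley realisation so that the element $a_j$ at an index $j\in\mathbb I$ whose $\mathbb I$-position corresponds to $G$ lands inside a $\sigma_n$-neighbourhood of $d$.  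This produces a model of $T^+$ in which $\phi(d)\wedge\sigma_n(d,c_j)$ holds, contradicting $\phi\vdash_{T^+}\neg\sigma_n(x,c_j)$.

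The delicate step is carrying out this Morley-extension choice rigorously, so that the element at index $j$ can be chosen Morley-compatible with $\bar\beta$ while simultaneously lying inside $\eps_\frak p(d)$; here property~(3) of Lemma~\ref{non_simple_scl}—the existence of $p$-realisations just above and below every $\sigma_n$-neighbourhood, supplying $\eps_\frak p$-classes arbitrarily close to (but distinct from) the class of $d$—is the essential ingredient. This bilateral flexibility is precisely what the non-simplicity of $\frak p$ over $A$ contributes, and it is what distinguishes the non-simple case from the simple convex case of Proposition~\ref{P_conv_simple_invariants}.
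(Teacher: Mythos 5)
Your setup is the right one and coincides with the paper's: introduce a Morley sequence $(a_i)_{i\in\mathbb I}$, form the partial type $q(x)=p(x)\cup\{\neg\sigma_n(x,a_i):n<\omega,\,i\in\mathbb I\}$ (equivalently, the paper's $\Sigma(x)=p(x)\cup\bigcup_i x\notin\eps_{\frak p}(a_i)$), and aim to show it is non-isolated so that the Omitting Types Theorem produces the required countable model. The derivation of $\Inv_{\frak p,A}(M_{\mathbb I})\cong\mathbb I$ from omission is also correct.

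The gap is in the non-isolation argument. Your strategy is to extend $\bar\beta$ to a full $\mathbb I$-indexed Morley realisation with the element at some new index $j$ falling inside $\eps_{\frak p}(d)$, i.e.\ you want a $j\in\mathbb I$ sitting strictly between $i_l$ and $i_{l+1}$ (the positions delimiting the gap $G$ containing $\eps_{\frak p}(d)$). But $\mathbb I$ is \emph{arbitrary}: it may have consecutive elements, or be finite. If $i_l$ and $i_{l+1}$ are adjacent in $\mathbb I$ (or if $\mathbb I$ equals $\{i_1,\dots,i_k\}$ itself), there is simply no index $j$ to use, and the proposed ``Morley-extension'' produces nothing. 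This is not an edge case: it is the typical case, and it is exactly what makes the theorem interesting, since it is where the order type of $\Lin_{\frak p}(M)$ is forced to have gaps. You also appear to misread Lemma~\ref{non_simple_scl}(3): the elements $b',b''$ it supplies lie \emph{inside} $\eps_{\frak p}(a)$ (they satisfy $\sigma_{n+1}(a,\cdot)$), not in ``$\eps_{\frak p}$-classes distinct from'' it; they are just outside the smaller neighbourhood $\sigma_n(a,\Mon)$.

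The paper closes the gap by a quite different device that requires no new index at all. Having reduced (after a case analysis and an indiscernibility argument) to the equivalence ``$(a_1,\dots,a_k,b,a_{k+1},\dots,a_n)$ Morley $\iff\ \models\psi(b,\bar a)$,'' it observes that $p(x)\cup\{\sigma_n(\Mon,a_k)<x:n<\omega\}\cup\{x<\eps_{\frak p}(a_{k+1})\}\vdash\psi(x,\bar a)$, so by compactness some single $m$ suffices. Then Lemma~\ref{non_simple_scl}(3) (correctly read) supplies $b''\in\eps_{\frak p}(a_k)$ with $\sigma_m(\Mon,a_k)<b''$; this $b''$ meets the hypothesis of the compactness consequence, hence $\models\psi(b'',\bar a)$, and then the equivalence forces $(a_k,b'')$ to be a Morley sequence, contradicting $b''\in\eps_{\frak p}(a_k)$. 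The contradiction thus lives entirely inside the class $\eps_{\frak p}(a_k)$ — no new Morley index is needed — and this is precisely what the ``boundary growth'' of the $\sigma_n$'s (i.e.\ non-simplicity) makes possible. Your proposal would need to be rewritten along these lines; as it stands, the key step is not just ``delicate'' but fails outright whenever $\mathbb I$ has adjacent elements.
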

\begin{proof}\setcounter{equation}{0}
Let $p=\mathfrak p_{\strok A}$ and let  $\mathbb I=(I,<_I)$ be a countable linear order. Choose a Morley sequence  $\bar a_I=(a_i\,|\,i\in I)$ in $\mathfrak p$ over $A$. Define
 \ \ $\Sigma(x):=p(x)\cup\bigcup_{i\in I}x\notin\eps_\frak p(a_i)\,.$ 
 
It suffices to show that $\Sigma(x)$ can be omitted in a model containing $A\bar a_I$. Otherwise, by   Omitting Types Theorem, there is a finite subtuple $\bar a$  of $\bar a_I$ and a consistent formula $\psi(x,\bar y)$ over $A$ such that $\psi(x,\bar a)\vdash\Sigma(x)$. Without loss of generality, assume that $\bar a=a_1...a_n$ where the order induced by $\mathbb I$ on indexes is natural. After slightly modifying $\psi$ we may assume that one of the following cases holds:
\begin{enumerate}
\item \ $\psi(x,\bar a)\vdash a_k<x<a_{k+1}$, \ for some $k< n$;
\item \ $\psi(x,\bar a)\vdash x<a_1$ ;
\item \ $\psi(x,\bar a)\vdash  a_{n}<x$ .
\end{enumerate}
Consider the first case and assume that  $\psi(x,\bar a)\vdash a_k<x<a_{k+1}$. Then $\psi(x,\bar a)\vdash \Sigma(x)$ implies 
$$\psi(x,\bar a)\vdash \eps_\frak p(a_k)<x<\eps_\frak p(a_{k+1})\,.$$
Since $\psi(x,\bar a)$ is consistent we deduce: 
\begin{equation}\label{eqy1}
\mbox{ $(a_1,\ldots ,a_k,b,a_{k+1},\ldots ,a_n)$ is a Morley sequence in $\frak p$ over $A$ \ \ if and only if \  \ $\models\psi(b,\bar a)$.}
\end{equation}
Then, by the remark preceding the theorem, we have:  $$p(x)\ \cup\  \{\sigma_n(\Mon,a_k)<x\,|\,n\in\omega\}\ \cup \ ''x<\eps_\frak p(a_{k+1})''\ \vdash\ \psi(x,\bar a)\, $$ where $\{\sigma_n(x,y)\,|\,n\in\omega\}$ satisfies the conclusion of Lemma \ref{non_simple_scl}.  By compactness   there is $m$ such that \ 
\begin{equation}\label{eqy2}
p(x)\ \cup\ \{\sigma_m(\Mon,a_k)<x\}\ \cup\ ''x<\eps_\frak p(a_{k+1})''\ \vdash\ \psi(x,\bar a)\,.
\end{equation} 
By Lemma \ref{non_simple_scl} again there is \ $b''\in \sigma_{m+1}(\Mon,a_k)\smallsetminus \sigma_m(\Mon,a_k)$ \ realizing $p$, such that $\sigma_m(\Mon,a_k)<b''$.  Then  $\models\sigma_{m+1}(b'',a_k)$ witnesses $b''\in\eps_\frak p(a_k)$, so $b''<\eps_\frak p(a_{k+1})$. Therefore $b''$ satisfies the left hand side of (\ref{eqy2}), so it satisfies $\psi(x,\bar a)$, too. By (\ref{eqy1})  $(a_1,,..,a_k,b'',a_{k+1},...,a_n)$ is a Morley sequence so $\eps_\frak p(a_k)<b''$. A contradiction. The proof in the first case is complete.

\smallskip 
Similarly, in the second case we get: 
\begin{center}
 $(b,a_1,\ldots,a_n)$ is a Morley sequence in $\frak p$ over $A$ \ \ if and only if \  \ $\models\psi(b,\bar a)\, .$ 
\end{center}
Then we apply compactness to  \ 
 $p(x)\cup  \{x< \sigma_n(\Mon,a_1)\,|\,n\in\omega\} \vdash\psi(x,\bar a)\, $, \ and reach  a contradiction in a similar way. The proof in the third case is similar. 
\end{proof}

\setcounter{equation}{0}
\begin{prop}\label{T_simple_nonconvex}
Suppose that $T$ and $A$ are countable and that $\mathfrak p$ is weakly regular, $A$-asymmetric and simple over $A$. If $\frak p$ is not convex over $A$ then for every countable,   linear order $\mathbb I$ there exists a countable model $M_{\mathbb I}\supseteq  A$ such that $\Inv_{\mathfrak p,A}(M_{\mathbb I})\cong \mathbb I$.
\end{prop}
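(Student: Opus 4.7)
The plan is to mirror the Omitting Types argument of Proposition~\ref{T_nonsimple_anyinv}, with non-convexity playing the role that non-simplicity played there. Set $p=\frak p_{\strok A}$. Fix a countable linear order $\mathbb I=(I,<_I)$ and a Morley sequence $\bar a_I=(a_i\mid i\in I)$ in $\frak p$ over $A$, and consider
$$\Sigma(x):=p(x)\cup\bigcup_{i\in I}\{x\notin\eps_\frak p(a_i)\}.$$
Using simplicity, fix a symmetric formula $\sigma_\frak p(x,y)$ over $A$ which relatively defines $\eps_\frak p$ on $p(\Mon)$; as explained at the beginning of the section, we may assume $\sigma_\frak p$ defines an equivalence relation on $\Mon$ and that $\sigma_\frak p(a,y)$ is strongly $\frak p$-bounded for each $a\models p$, so that $x\notin\eps_\frak p(a_i)$ becomes $\neg\sigma_\frak p(x,a_i)$. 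As in Proposition~\ref{T_nonsimple_anyinv} it suffices to omit $\Sigma(x)$ in a countable model containing $A\bar a_I$; otherwise, by the Omitting Types Theorem, there is a consistent $\psi(x,\bar a)$ over $A$ for some finite subtuple $\bar a=a_1<_I\cdots<_I a_n$ of $\bar a_I$ such that $\psi(x,\bar a)\vdash\Sigma(x)$. I split into the same three cases as before and detail the main one.

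In Case~1 ($\psi\vdash a_k<x<a_{k+1}$), the indiscernibility argument from the non-simple proof applies verbatim and gives
$$\models\psi(b,\bar a)\iff(a_1,\ldots,a_k,b,a_{k+1},\ldots,a_n)\text{ is a Morley sequence in $\frak p$ over $A$,}$$
which, in the simple setting, is captured by $b\models p$ together with $a_k<b<a_{k+1}$, $\neg\sigma_\frak p(b,a_k)$ and $\neg\sigma_\frak p(b,a_{k+1})$. Thus
$$p(x)\cup\{a_k<x<a_{k+1},\ \neg\sigma_\frak p(x,a_k),\ \neg\sigma_\frak p(x,a_{k+1})\}\vdash\psi(x,\bar a),$$
and by compactness there is $\phi(x)\in p(x)$ such that, with
$$\chi(x,a_k,a_{k+1}):=\phi(x)\wedge a_k<x<a_{k+1}\wedge\neg\sigma_\frak p(x,a_k)\wedge\neg\sigma_\frak p(x,a_{k+1}),$$
we have $\chi(x,a_k,a_{k+1})\vdash\psi(x,\bar a)$. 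Combined with $\psi\vdash p$ this yields
$$\chi(\Mon,a_k,a_{k+1})\subseteq p(\Mon).\qquad(\star)$$
Cases~2 and~3 run in parallel, producing one-sided variants of $(\star)$ for the formulas $\phi(x)\wedge x<a_1\wedge\neg\sigma_\frak p(x,a_1)$ and $\phi(x)\wedge a_n<x\wedge\neg\sigma_\frak p(x,a_n)$ respectively.

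The contradiction is then to be extracted from $(\star)$ using non-convexity of $\frak p$ over $A$. By $A$-invariance of the Morley type, $(\star)$ transfers to every Morley pair $(u,v)$ in $\frak p$ over $A$: $\phi(\Mon)\cap(u,v)\cap\neg\sigma_\frak p(\Mon,u)\cap\neg\sigma_\frak p(\Mon,v)\subseteq p(\Mon)$. Starting from this Morley-invariant inclusion, from the symmetry of $\sigma_\frak p$, and from the $<$-convexity of $\eps_\frak p$-classes in $p(\Mon)$ given by Proposition~\ref{P_totdeg_cl_iplies_invariant}(iv), I plan to construct an $A$-definable partial order $<'$ (essentially by restricting $<$ to $\phi(\Mon)$ and further trimming inside the $\sigma_\frak p$-classes) which still witnesses $A$-asymmetric regularity of $\frak p$ and under which $p(\Mon)$ is $<'$-convex in $\Mon$, contradicting non-convexity.

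The main obstacle is precisely this last step. The key structural observation is that any two distinct $p$-realizers with $u<v$ either form a Morley pair (so ``between-class'' non-convexity is handled by $(\star)$) or lie in a single $\eps_\frak p$-class (``within-class'' non-convexity); the delicate point is that the trimming of $<$ must simultaneously absorb any non-$p$ points lurking inside a $\sigma_\frak p$-class of a $p$-realizer, while preserving strict increase of every Morley sequence. Getting these two conditions to hold at the same time, using only $(\star)$, its one-sided variants from Cases~2 and~3, and the definability given by simplicity, is the technical heart of the argument.
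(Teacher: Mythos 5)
Your setup through the derivation of $(\star)$ is correct and matches the paper's: the same $\Sigma(x)$, the same appeal to the Omitting Types Theorem, the same three cases, and the same identification of $\models\psi(b,\bar a)$ with the Morley extendability of $\bar a$ by $b$. The genuine gap is in the closing step, but it is a gap of not seeing through to the finish rather than of being on a wrong track: the ``trimming inside the $\sigma_\frak p$-classes'' you flag as the main obstacle is not needed at all, and your own $(\star)$ already suffices. The key observation you are missing is that the strong $\frak p$-boundedness of $\sigma_\frak p(a,y)$ (which you already arranged in the setup) gives, via Lemma~\ref{L_str_bndd_morley}, that $c_1<\sigma_\frak p(u,\Mon)<c_2$ for any Morley triple $(c_1,u,c_2)$. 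So take $<'=<\cap\,\phi(\Mon)^2$, which still orders Morley sequences increasingly since $\phi\in p$. Suppose $c<'b'<'c'$ with $c,c'\models p$, so in particular $b'\in\phi(\Mon)$. In the monster choose $u,v\models p$ and auxiliary $u_0,v_0\models p$ with $\eps_\frak p(u_0)<\eps_\frak p(u)<\eps_\frak p(c)$ and $\eps_\frak p(c')<\eps_\frak p(v)<\eps_\frak p(v_0)$ (such exist because $\Lin_A(\frak p)$ is unbounded in $\Mon$). Then $\sigma_\frak p(u,\Mon)<c<b'<c'<\sigma_\frak p(v,\Mon)$, so $u<b'<v$ and $b'$ lies strictly between the two classes. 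Now $(u,v)$ is a Morley pair, hence has the same $A$-type as $(a_k,a_{k+1})$, so $(\star)$ transfers by an $A$-automorphism to $\phi(\Mon)\cap\{x\mid u<x<v\}\cap\neg\sigma_\frak p(\Mon,u)\cap\neg\sigma_\frak p(\Mon,v)\subseteq p(\Mon)$, and $b'$ satisfies the left-hand side, so $b'\models p$. Thus $<'$ witnesses convexity of $\frak p$ over $A$, contradicting the hypothesis. Cases~2 and~3 close the same way, bracketing $b'$ only on one side (or, as the paper does, reducing to Case~1 by appending a fresh $a_{n+1}$). The point is that you never needed to worry about $b'$ lying inside a $\sigma_\frak p$-class of some $p$-realizer: you simply move the bracketing Morley pair far enough out so that $b'$ is trapped between the classes, not inside one.

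Worth noting: the paper packages the compactness step differently and more slickly. Rather than fixing $\phi(x)\in p(x)$ with parameters $a_k,a_{k+1}$ and transferring $(\star)$ by automorphism, it universally quantifies the tuple $\bar y$ in the compactness deduction, producing a single formula $\phi'(b)$ over $A$ such that $\phi'(\Mon)^2\cap<$ already witnesses convexity; then the Morley pair $\bar a'$ is chosen inside the proof of the claim. Your transfer-by-automorphism route gets to the same contradiction and is equally valid. Both approaches hinge on the same two ingredients you assembled: the strong $\frak p$-boundedness of the defining formula $\sigma_\frak p$, and the observation that $\psi\vdash\Sigma\supseteq p$ forces anything squeezed between bounded classes to realize $p$.
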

\begin{proof}
Let $p=\mathfrak p_{\strok A}$, $\mathbb I=(I,<_I)$, $\bar a_I=(a_i\,|\,i\in I)$   and  $\Sigma(x):=p(x)\cup\bigcup_{i\in I}x\notin\eps_\frak p(a_i)\,$ be as in the proof of Proposition \ref{T_nonsimple_anyinv}. 
It suffices to show that $\Sigma(x)$ can be omitted in a model containing $A\bar a_I$. 
Otherwise, there are  $\bar a\subset\bar a_I$   and a consistent formula $\psi(x,\bar y)$ over $A$ such that $\psi(x,\bar a)\vdash\Sigma(x)$, where the order on $\bar a=a_1...a_n$ is natural. Again we have the same three cases as in the proof of Theorem \ref{T_nonsimple_anyinv} to consider. 

Assume that we are in the first case: 
 \ $\psi(x,\bar a)\vdash a_k<x<a_{k+1}$, \ for some $k< n$. \ Then:
\begin{equation}\label{Eq_l1}
\mbox{ $(a_1,,..,a_k,b,a_{k+1},...,a_n)$ is a Morley sequence in $\frak p$ over $A$ \ \ if and only if \ \  $\models\psi(b,\bar a)$ holds.}
\end{equation}
Let $\sigma_\frak p(x,a)$ be symmetric formula such that $\sigma_\frak p(x,a)$ relatively defines $\eps_\frak p(a)$.  We have: \ \ 
$$\bigcup_{1\leq i\leq n}p(y_i)\cup \{\sigma_\frak p(y_i,\Mon)<  y_{i+1} \,|\,1\leq i< n\,\} \cup \{\sigma_\frak p(y_k,\Mon)<b<\sigma_\frak p(y_{k+1},\Mon)\}\forces \psi(b,\bar y)\,.
$$
By compactness, there is $\phi(y)\in p(y)$ such that:
\begin{equation}\label{Eq_l2}\models(\forall \bar y)\left(\left(\bigwedge_{1\leq i\leq n}\phi(y_i)\wedge \bigwedge_{1\leq i< n}\sigma_\frak p(y_i,\Mon)<  y_{i+1} \wedge  \sigma_\frak p(y_k,\Mon)<b<\sigma_\frak p(y_{k+1},\Mon)\right)\Rightarrow  \psi(b,\bar y)\right)\,.
\end{equation}
Denote this formula by $\phi'(b)$; clearly $\phi'(x)\in p(x)$. Replace $<$ by $\phi'(\Mon)^2\cap <$\,. We {\em claim} that the modified $<$ witnesses that $\frak p$ is convex over $A$. Indeed, suppose that  $c,c'$ realize $p$ and that $c<b'<c'$; then, due to the modification of $<$, we have $\models\phi'(b')$. Choose $a_1',...,a_n'$ realizing $p$ such  that 
$$\eps_\frak p(a_1')<...<\eps_\frak p(a_k')<c <b'<c'<\eps_\frak p(a_{k+1}')<...<\eps_\frak p(a_n ')\,.$$
Clearly $a_1'...a_n'b'$ satisfy the left hand side of the implication in (\ref{Eq_l2}) so $\models \psi(b',\bar a')$ holds and, by (\ref{Eq_l1}), $(a_1,,..,a_k,b,a_{k+1},...,a_n)$ is a Morley sequence in $\frak p$ over $A$. In particular $b'\models p$. Thus $c<b'<c'$ implies that $b'$ realizes $p$, so $\frak p$ is convex over $A$. A contradiction.

\smallskip The remaining cases reduce to the first one. Suppose that $\psi(x,\bar a)\forces a_n<x$.  Then choose $a_{n+1}$ realizing $p$ such that $a_n<\eps_\frak p(a_{n+1})$. Define:
$$\psi'(x,\bar a,a_{n+1}):=\psi(x,\bar a) \wedge \sigma_\frak p(a_n,\Mon)<x<\sigma_\frak p(a_{n+1},\Mon)\, .$$ 
Then: \ \ $(a_1,...,a_n, b,a_{n+1})$ is a Morley sequence in $\frak p$ over $A$ \ \ if and only if \ \  $\models\psi'(b,\bar a,a_{n+1})$ holds. This reduces the second, and similarly the third, case to the proved one. The proof of the proposition and  Theorem \ref{T_omit_uvod} are now  completed. 
\end{proof}

Since there are continuum many non-isomorphic countable linear orders, we draw an immediate corollary.

\begin{cor} 
Suppose that $T$ is countable, $A$ is finite, and   $\mathfrak p$ is weakly regular and $A$-asymmetric. If $I(\aleph_0,T)<2^{\aleph_0}$ then $p$ is simple and convex over $A$.
\end{cor}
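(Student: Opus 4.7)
The plan is to argue by contraposition: suppose $\frak p$ fails to be simple over $A$, or is simple but not convex over $A$. The goal is to manufacture $2^{\aleph_0}$ pairwise non-isomorphic countable models of $T$, contradicting $I(\aleph_0,T)<2^{\aleph_0}$.

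First I would invoke the two preceding propositions. If $\frak p$ is not simple over $A$, Proposition \ref{T_nonsimple_anyinv} provides, for every countable linear order $\mathbb I$, a countable model $M_{\mathbb I}\supseteq A$ with $\Inv_{\frak p,A}(M_{\mathbb I})\cong\mathbb I$; if $\frak p$ is simple but not convex over $A$, Proposition \ref{T_simple_nonconvex} does the same. Since the class of countable linear orders contains $2^{\aleph_0}$ isomorphism types, this yields a family $\{M_{\mathbb I}\}$ of size $2^{\aleph_0}$ whose members have pairwise non-isomorphic invariants.

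Next I would argue that $\Inv_{\frak p,A}(M)$ is an invariant of $M$ as a model of $T_A$, the expansion of $T$ to the language $\mathcal L_A=\mathcal L\cup\{c_a:a\in A\}$ in which each element of $A$ is named by a constant. If $M_1,M_2\supseteq A$ are isomorphic as $\mathcal L_A$-structures, an $\mathcal L_A$-isomorphism between them extends (by saturation of $\Mon$) to an automorphism of $\Mon$ fixing $A$ pointwise; such an automorphism preserves $\frak p$ by $A$-invariance and therefore carries Morley sequences in $\frak p$ over $A$ consisting of elements of $M_1$ to such Morley sequences in $M_2$, preserving the linear order type. Hence $M_{\mathbb I_1}\not\cong M_{\mathbb I_2}$ as $T_A$-models whenever $\mathbb I_1\not\cong\mathbb I_2$, giving $I(\aleph_0,T_A)\geq 2^{\aleph_0}$.

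Finally I would transfer this lower bound back to $T$. Because $A$ is finite and $T$ is countable, each countable $T$-model admits only countably many interpretations of the constants $c_a$, and hence at most countably many pairwise non-isomorphic expansions to a model of $T_A$. This gives $I(\aleph_0,T_A)\leq\aleph_0\cdot I(\aleph_0,T)$, which together with the hypothesis $I(\aleph_0,T)<2^{\aleph_0}$ forces $I(\aleph_0,T_A)<2^{\aleph_0}$, contradicting the previous paragraph. No step looks genuinely delicate; the only point requiring explicit care is the invariance of $\Inv_{\frak p,A}(M)$ under $\mathcal L_A$-automorphisms, which is exactly where the $A$-invariance of $\frak p$ is used essentially.
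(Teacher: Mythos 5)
Your proof is correct and follows the approach the paper has in mind (the paper explicitly labels this an ``immediate corollary'' of Propositions \ref{T_nonsimple_anyinv} and \ref{T_simple_nonconvex} plus the fact that there are continuum many countable linear order types). You have simply made explicit the two routine background facts the paper leaves implicit: that $\Inv_{\frak p,A}(-)$ is an isomorphism invariant of $\mathcal L_A$-structures because an $A$-fixing isomorphism of small submodels extends to an automorphism of $\Mon$ preserving the $A$-invariant type $\frak p$, and that finiteness of $A$ gives $I(\aleph_0,T_A)\leq\aleph_0\cdot I(\aleph_0,T)$, so the lower bound transfers from $T_A$ back to $T$.
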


\section{Orthogonality}\label{X_orth}

In this  section we study orthogonality of regular types and prove Theorem \ref{T_asym_orth_uvod}. Orthogonality and  weak orthogonality  are essentially distinct relations on asymmetric regular types, as the following example shows.

\begin{exm}\label{E_wor_neq_perp}
Let $(\Mon,<)$ be a dense linear order without endpoints. Let $\frak p$ be the global type of an infinitely large element and let $\frak q$ be the type of an infinitely small element. Then $\frak p$ and $\frak q$ are $\emptyset$-invariant and  \ $\frak p\perp\frak q$. On the other hand \ $\frak p_{\strok \emptyset}= \frak q_{\strok \emptyset}$ \  and hence \ $\frak p_{\strok \emptyset}\nwor \frak q_{\strok \emptyset}$. 
\end{exm}

\begin{defn}\label{D_I(b)_D(b)}
Let $\mathfrak p$ be regular   and $A$-asymmetric.   For {\em any} tuple $\bar b$ define \ 
\begin{center}
 $D_{\mathfrak p,A}(\bar b):=\{a\models p\,|\,a\nmodels\mathfrak p_{\strok A\bar b}\}$ \ \  and \
\ $I_{\mathfrak p,A}(\bar b):=\mathfrak p_{\strok A\bar b}(\Mon)$.
\end{center}
\end{defn}

$I$ and $D$ stand for `independent' and `dependent' respectively, because realizations of   $\mathfrak p_{\strok A\bar b}$  are considered as being `independent from $\bar b$ over $A$'. Usually the meaning of $A$ will be clear from the context so we will write simply $I_\frak p(\bar b)$ and $D_\frak p(\bar b)$.

\begin{assu}Throughout this section we will assume that   $\frak p$ is regular   and $A$-asymmetric witnessed by $<$. By $p$ we will denote $\frak p_{\strok A}$. 
\end{assu}

\begin{lem}\label{Ld<i}
\begin{enumerate}[(i)]
\item $\tp(\bar b/A)\nwor p$ \ if and only if \ $D_{\mathfrak p}(\bar b)\neq \emptyset$.

\item If $D_{\mathfrak p}(\bar b)$ is nonempty, then it is a convex, downwards closed subset of $p(\Mon)$.

\item $D_{\mathfrak p}(\bar b)<I_{\mathfrak p}(\bar b)$ holds for any tuple $\bar b$ for which $D_{\mathfrak p}(\bar b)\neq\emptyset$. 
\end{enumerate}
\end{lem}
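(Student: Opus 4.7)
The plan is to unpack the definitions and use the regularity of $\frak p$ over $A$ applied with $B=A\bar b$ in a uniform way. For part (i), I observe that $p$ and $\tp(\bar b/A)$ being weakly orthogonal is equivalent to $p$ having a unique completion over $A\bar b$ (using an $A$-automorphism to replace any realization $\bar b'\equiv_A\bar b$ by $\bar b$). The global invariant type $\frak p$ always provides one such completion, namely $\frak p_{\strok A\bar b}$, so uniqueness amounts to every realization of $p$ satisfying $\frak p_{\strok A\bar b}$, i.e.\ $D_\frak p(\bar b)=\emptyset$. For the converse (the nontrivial direction), if $D_\frak p(\bar b)\neq \emptyset$ I pick $a\in D_\frak p(\bar b)$ and, using saturation of $\Mon$, a realization $a'$ of $\frak p_{\strok A\bar b}$; then $\tp(a/A\bar b)\neq\tp(a'/A\bar b)$ witnesses non-weak-orthogonality.

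For parts (ii) and (iii) the workhorse is the following remark: if $a\in D_\frak p(\bar b)$ then $a\not\models \frak p_{\strok A\bar b}$, so the regularity dichotomy applied with $B=A\bar b$ forces the second alternative, giving $\frak p_{\strok A\bar b}\vdash \frak p_{\strok A\bar b a}$. Part (iii) is then immediate: any $a'\in I_\frak p(\bar b)$ realizes $\frak p_{\strok A\bar b}$, hence also $\frak p_{\strok A\bar b a}$, and in particular $\frak p_{\strok A a}$; so $(a,a')$ is a Morley sequence in $\frak p$ over $A$, which by Corollary~\ref{cor_order_and_morley} yields $a<a'$.

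For (ii) it suffices to establish downward closure of $D_\frak p(\bar b)$ inside $(p(\Mon),<)$, since convexity then follows formally (given $a_1<a<a_2$ with $a_i\in D_\frak p(\bar b)$ and $a\models p$, downward closure applied to $a<a_2$ yields $a\in D_\frak p(\bar b)$). For the downward closure, suppose $a\in D_\frak p(\bar b)$ and $a'\in p(\Mon)$ with $a'<a$, and assume for contradiction $a'\notin D_\frak p(\bar b)$, i.e.\ $a'\models \frak p_{\strok A\bar b}$. By the implication above, $a'\models \frak p_{\strok A a}$, so $(a,a')$ is a Morley sequence in $\frak p$ over $A$, and Corollary~\ref{cor_order_and_morley} gives $a<a'$, contradicting $a'<a$.

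The argument is short; the only point that deserves attention is that full regularity (and not merely weak regularity) is genuinely used, because $\bar b$ is an arbitrary tuple and need not consist of realizations of $p$, so the regularity dichotomy has to be available for arbitrary $B\supseteq A$. I do not anticipate any other obstacle.
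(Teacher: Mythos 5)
Your proof is correct and follows essentially the same route as the paper's: part (i) is the routine unpacking of weak orthogonality (which the paper dismisses as "easy"), and parts (ii) and (iii) each invoke the regularity dichotomy with $B=A\bar b$ to produce a Morley pair and derive a strict inequality under the witnessing order, exactly as the paper does. Your remark that full regularity (rather than weak regularity) is needed because $\bar b$ is an arbitrary tuple is correct and consistent with the section's standing assumption.
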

\begin{proof}(i) is easy.  To prove (ii) assume that $a'<a$ realize $p$ and that $a\in D_\frak p(\bar b)$. It suffices to show that $a'\in D_\frak p(\bar b)$. Otherwise, $a'\in I_\frak p(\bar b)$ combined with $a\nmodels \frak p_{\strok A\bar b}$, by regularity, implies $a'\models \frak p_{\strok A\bar ba}$ and thus $a<a'$. A contradiction.

\smallskip 
(iii) 
 Suppose that $a\in I_{\mathfrak p}(\bar b)$ and $c\in D_{\mathfrak p}(\bar b)$. Then $a\models\mathfrak p_{\strok A\bar b}$, $c\models\mathfrak p_{\strok A}$ and $c\nmodels \mathfrak p_{\strok A\bar b}$. The regularity condition implies $a\models \mathfrak p_{\strok A\bar bc}$ so, in particular, $c<a$.
\end{proof}

Consider the set of all tuples ordered by the inclusion of the corresponding $I_{\mathfrak p}(-)$'s. It is clearly  a quasi-order and, by the next lemma,  it is a total quasi-order: any pair of tuples is comparable.

\begin{lem}\label{Ltotal} For any pair  $\bar b$, $\bar c$ of  tuples (of possibly distinct size)  at least one of  $I_{\mathfrak p}(\bar b)\subseteq I_{\mathfrak p}(\bar c)$ and $I_{\mathfrak p}(\bar c)\subseteq I_{\mathfrak p}(\bar b)$ holds. Therefore, at least one of  $D_{\mathfrak p}(\bar c)\subseteq D_{\mathfrak p}(\bar b)$ and $D_{\mathfrak p}(\bar b)\subseteq D_{\mathfrak p}(\bar c)$ holds, too.
\end{lem}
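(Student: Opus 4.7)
The plan is to reduce everything to the basic observation that $p(\Mon)$ is the disjoint union $I_{\mathfrak p}(\bar b)\cup D_{\mathfrak p}(\bar b)$ for every tuple $\bar b$, and then exploit the order-theoretic content of Lemma~\ref{Ld<i}(iii): within $p(\Mon)$, the dependent part lies strictly below the independent part with respect to any witness $<$ of asymmetric regularity. If I can use this, the two inclusion statements in the lemma become equivalent and both should fall out of a single contradiction argument.

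First I would handle the trivial case: if $D_{\mathfrak p}(\bar b)=\emptyset$, then $I_{\mathfrak p}(\bar b)=p(\Mon)\supseteq I_{\mathfrak p}(\bar c)$, and symmetrically for $\bar c$, so one inclusion holds for free. Thus I may assume $D_{\mathfrak p}(\bar b)$ and $D_{\mathfrak p}(\bar c)$ are both nonempty, which is exactly the hypothesis under which Lemma~\ref{Ld<i}(iii) applies.

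Next, aiming for a contradiction, I would suppose neither inclusion holds and pick witnesses $a\in I_{\mathfrak p}(\bar b)\setminus I_{\mathfrak p}(\bar c)$ and $a'\in I_{\mathfrak p}(\bar c)\setminus I_{\mathfrak p}(\bar b)$. Since $I_{\mathfrak p}(\bar b)$ and $I_{\mathfrak p}(\bar c)$ are contained in $p(\Mon)$ (because $\mathfrak p_{\strok A\bar b}$ and $\mathfrak p_{\strok A\bar c}$ extend $p=\mathfrak p_{\strok A}$), the complements inside $p(\Mon)$ are precisely the $D$-sets, so $a\in D_{\mathfrak p}(\bar c)$ and $a'\in D_{\mathfrak p}(\bar b)$. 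Applying Lemma~\ref{Ld<i}(iii) to $\bar c$ gives $a<a'$, and applying it to $\bar b$ gives $a'<a$, contradiction. The second half of the lemma is then immediate because $I_{\mathfrak p}(\bar b)\subseteq I_{\mathfrak p}(\bar c)$ is equivalent to $D_{\mathfrak p}(\bar c)\subseteq D_{\mathfrak p}(\bar b)$ by complementation in $p(\Mon)$.

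I do not anticipate a genuine obstacle here: the only thing to double-check is that $I_{\mathfrak p}(\bar b)\subseteq p(\Mon)$, which is automatic, and that the use of $<$ in Lemma~\ref{Ld<i}(iii) is the same witness for both $\bar b$ and $\bar c$ — and it is, since $<$ is fixed as a witness of $A$-asymmetric regularity and Lemma~\ref{Ld<i}(iii) holds for every tuple simultaneously with respect to this single order. So the whole argument is a one-line contradiction once the trivial case has been dispatched.
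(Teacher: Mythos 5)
Your proof is correct and is essentially the paper's argument: the paper picks witnesses $a,d$ from the two set-differences and invokes regularity directly to conclude $d<a$ and $a<d$, which is exactly the content of Lemma~\ref{Ld<i}(iii) that you cite as a black box. The only cosmetic difference is that you dispatch the $D_{\mathfrak p}(\bar b)=\emptyset$ case separately, which is actually unnecessary since the contradiction hypothesis already forces both $D$-sets to be nonempty.
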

\begin{proof} Suppose for a contradiction that neither of them holds and choose $a\in I_\frak p(\bar b)\smallsetminus I_\frak p(\bar c)$ and $d\in I_\frak p(\bar c)\smallsetminus I_\frak p(\bar b)$. By regularity, 
$a\models \mathfrak p_{\strok A\bar b}$ and $d\nmodels \mathfrak p_{\strok A\bar b}$ imply $a\models \mathfrak p_{\strok A\bar bd}$; in particular $d<a$. Similarly, $d\models \mathfrak p_{\strok A\bar c}$ and $a\nmodels \mathfrak p_{\strok A\bar c}$ imply $d\models \mathfrak p_{\strok A\bar ca}$ and $a<d$. A contradiction.  
\end{proof}

\begin{lem}\label{LDscl_closed}
$D_{\mathfrak p}(\bar c)$ is $\eps_\frak p$-closed for any tuple $\bar c$:     $a\in  D_{\mathfrak p}(\bar c)$ \  implies \ $\eps_\frak p(a)\subseteq D_{\mathfrak p}(\bar c)$. 
\end{lem}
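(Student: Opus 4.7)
The plan is to argue by contradiction: suppose $a \in D_{\mathfrak p}(\bar c)$ and some $b \in \eps_\frak p(a)$ lies outside $D_{\mathfrak p}(\bar c)$, i.e. $b \in I_{\mathfrak p}(\bar c)$. This means $a \models p$ with $a \nmodels \mathfrak p_{\strok A\bar c}$, while $b \models \mathfrak p_{\strok A\bar c}$.

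The key is to feed this into the regularity condition with base set $A$ and parameter set $B = A\bar c$. Since $a \models \mathfrak p_{\strok A}$ but $a \nmodels \mathfrak p_{\strok A\bar c}$, the second disjunct of regularity must hold: $\mathfrak p_{\strok A\bar c} \forces \mathfrak p_{\strok A\bar c a}$. Applying this to $b$, which realizes the left-hand side, yields $b \models \mathfrak p_{\strok A\bar c a}$, and in particular $b \models \mathfrak p_{\strok A a}$.

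But $b \models \mathfrak p_{\strok Aa}$ together with $b \models p$ says exactly that $(a,b)$ is a Morley sequence in $\mathfrak p$ over $A$, which contradicts $b \in \eps_\frak p(a)$ (since $\eps_\frak p(a)$ is the set of realizations of $p$ that cannot be paired with $a$ into a Morley sequence in either order). This contradiction closes the argument.

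There is essentially no obstacle here; the statement is a one-step consequence of the regularity disjunction. The only thing to keep in mind when writing it out is to invoke the correct form of regularity (over $A$ with parameters $\bar c$), and to remember the characterisation of $\eps_\frak p(a)$ from the Notation block following Theorem~\ref{T_regular_implies_totdeg} — namely that $b \in \eps_\frak p(a)$ forbids $b \models \mathfrak p_{\strok Aa}$.
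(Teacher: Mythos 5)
Your proof is correct, and it is genuinely different from (and cleaner than) the paper's own argument. The paper argues by contradiction via an automorphism: it takes $b'$ realizing $\mathfrak p_{\strok A\bar c a b}$, notes $D_\frak p(\bar c)<b<\eps_\frak p(b')$ using Lemma~\ref{Ld<i}(iii), finds $f\in\Aut_{A\bar c}(\Mon)$ with $f(b)=b'$ (since $b,b'\in I_\frak p(\bar c)$ share a type over $A\bar c$), and derives a contradiction from the fact that $a'=f(a)$ must simultaneously lie in $\eps_\frak p(b')$ (hence $>D_\frak p(\bar c)$) and in the $f$-invariant set $D_\frak p(\bar c)$. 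Your argument instead feeds $B=A\bar c$ directly into the regularity dichotomy: from $a\models p$, $a\nmodels\mathfrak p_{\strok A\bar c}$ you get $\mathfrak p_{\strok A\bar c}\forces\mathfrak p_{\strok A\bar c a}$; applied to $b\models\mathfrak p_{\strok A\bar c}$ this gives $b\models\mathfrak p_{\strok Aa}$, flatly contradicting $b\in\eps_\frak p(a)$ (which by the notation block after Theorem~\ref{T_regular_implies_totdeg} forces $b\nmodels\mathfrak p_{\strok Aa}$). This is a single direct application of the definition of regularity over $A$, which is indeed in force throughout Section~\ref{X_orth}; it avoids both the auxiliary realization $b'$ and the automorphism, and does not invoke the ordering from Lemma~\ref{Ld<i}(iii). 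Both proofs are valid under the section's standing assumptions, but yours is shorter and more elementary.
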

\begin{proof}Suppose that $D_{\mathfrak p}(\bar c)$ is not $\eps_\frak p$-closed and work for a contradiction. Let $a\in D_{\mathfrak p}(\bar c)$ and $b\in\eps_\frak p(a)$ be such that $b\in I_{\mathfrak p}(\bar c)$.  Let $b'$ realize $\mathfrak p_{\strok A\bar cab}$. Then \ \ $D_{\mathfrak p}(\bar c)<b<\eps_\frak p(b')\ .$ \
The first inequality holds by Lemma \ref{Ld<i}(iii), and the second follows from $b'\models\mathfrak p_{\strok Ab}$.
Both $b$ and $b'$ belong to $I_{\mathfrak p}(\bar c)$; in particular, they have the same type over $A\bar c$ so there is  $f\in \Aut_{A\bar c}(\Mon)$ such that $f(b)=b'$. Let $a'=f(a)$. Since $a\in\eps_\frak p(b)$,   $a,b\equiv a',b'\,(A\bar c)$ implies   $a'\in\eps_\frak p(b')$ and by the above $D_{\mathfrak p}(\bar c)<a'$ holds. Since   $D_{\mathfrak p}(\bar c)$ is fixed setwise by $f$ and it contains $a$, then $a'=f(a)$ implies  $a'\in D_{\mathfrak p}(\bar c)$. A contradiction.    
\end{proof}

 In the following proposition we prove that asymmetric regular types intuitively  `have weight one with respect to $\nwor$ '.  

\begin{prop}\label{prop_nwor_transitivity} If  $q,r\in S(A)$ are such that $p\nwor q$ and $p\nwor r$ then $q\nwor r$.
\end{prop}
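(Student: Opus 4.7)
The plan is to argue by contradiction, assuming $q\wor r$.

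The first step is to invoke Lemma~\ref{Ld<i}(i), which converts $p\nwor q$ and $p\nwor r$ into $D_\frak p(\bar b)\neq\emptyset$ and $D_\frak p(\bar c)\neq\emptyset$ for any $\bar b\models q$ and $\bar c\models r$. By Lemma~\ref{Ltotal} the two sets are $\subseteq$-comparable. Because $q\wor r$ means $\tp(\bar b\bar c/A)$ is uniquely determined by $q(\bar y)\cup r(\bar z)$, and because $D_\frak p(-)$ is preserved by $A$-automorphisms, whichever inclusion holds must hold uniformly over all pairs $(\bar b,\bar c)\in q(\Mon)\times r(\Mon)$.

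Next I would handle the case $D_\frak p(\bar b)\subseteq D_\frak p(\bar c)$ for every such pair; the reverse case is symmetric, swapping the roles of $q$ and $r$. Fix any $\bar b\models q$ and pick $a\in D_\frak p(\bar b)$. It suffices to produce a single $\bar c'\models r$ with $a\in I_\frak p(\bar c')$, because then $a\in D_\frak p(\bar b)\subseteq D_\frak p(\bar c')$ collides with $a\notin D_\frak p(\bar c')$. Such a $\bar c'$ is obtained by the standard transport trick: pick any $\bar c_0\models r$ and any $a_0\models\frak p_{\strok A\bar c_0}$ (so $a_0\in I_\frak p(\bar c_0)$), then apply an $A$-automorphism $h$ sending $a_0$ to $a$, which exists because $\tp(a_0/A)=\tp(a/A)=p$. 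By the $A$-invariance of $\frak p$, $\bar c':=h(\bar c_0)$ satisfies $a=h(a_0)\models\frak p_{\strok A\bar c'}$, i.e.\ $a\in I_\frak p(\bar c')$, completing the contradiction.

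The conceptual crux is the uniformity step---seeing that $q\wor r$ forces one of the two $D_\frak p$-inclusions to hold globally. Once that is in hand, the contradiction is essentially automatic and uses only the $A$-invariance of $\frak p$ together with Lemmas~\ref{Ld<i} and \ref{Ltotal}; no use is made of convexity, simplicity, or even of the asymmetry of $\frak p$ beyond what those lemmas already package.
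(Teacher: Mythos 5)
Your proof is correct and, despite being phrased as a contradiction argument rather than the paper's direct construction of two distinct completions of $q(\bar y)\cup r(\bar z)$, it rests on exactly the same ingredients: Lemma~\ref{Ld<i}(i), Lemma~\ref{Ltotal}, the $A$-invariance of $D_\frak p(-)$, and the transport-by-automorphism step (which the paper uses implicitly when it ``chooses such a $\bar c$ that $a'\in D_\frak p(\bar c)$''). Essentially the same approach.
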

\begin{proof} We will show that both \ $q(\bar y)\cup r(\bar z)\cup \{D_\frak p(\bar y)\subsetneq D_\frak p(\bar z)\}$ \ and \ $q(\bar y)\cup r(\bar z)\cup \{D_\frak p(\bar z)\subsetneq D_\frak p(\bar y)\}$ \ are consistent (here $D_\frak p(\bar y)\subsetneq D_\frak p(\bar z)$ is expressed by an $\mathcal L_{\infty,\omega}$-formula);   it clearly follows that   
$q(\bar y)\cup r(\bar z)$ has at least two distinct completions and $q\nwor r$. Let $\bar b\models q$. Then, by Lemma \ref{Ld<i}(i), $D_\frak p(\bar b)\neq\emptyset$. Choose $a\in D_\frak p(\bar b)$ and $a'\in I_\frak p(\bar b)$. By Lemma \ref{Ld<i}(iii) we derive 
$D_\frak p(\bar b)<a'$. Now $p\nwor r$ implies that $D_\frak p(\bar c)\neq\emptyset$ holds for any $\bar c\models r$; choose such a $\bar c$ that $a'\in D_\frak p(\bar c)$. Then $a'\in D_\frak p(\bar c)\smallsetminus D_\frak p(\bar b)$ so, by Lemma \ref{Ltotal}, $D_\frak p(\bar b)\subsetneq D_\frak p(\bar c)$. Thus $(\bar b,\bar c)$ satisfies $q(\bar y)\cup r(\bar z)\cup \{D_\frak p(\bar y)\subsetneq D_\frak p(\bar z)\}$. A similar argument shows that $q(\bar y)\cup r(\bar z)\cup \{D_\frak p(\bar z)\subsetneq D_\frak p(\bar y)\}$ is consistent. 
\end{proof}

\begin{lem}\label{LDworsas}
Suppose that     $\mathfrak q$ is $A$-invariant  and $p\nwor\mathfrak q_{\strok A}$. Let $(\bar b,\bar b')\models (\mathfrak q^2)_{\strok A}$. 
\begin{enumerate}[(i)] 
\item   \ $(\mathfrak p\otimes\mathfrak q)_{\strok A}= (\mathfrak q\otimes \mathfrak p)_{\strok A} $ \   \  implies \ \  $D_\frak p(\bar b')\subsetneq D_\frak p(\bar b)$.

\item \ $(\mathfrak p\otimes\mathfrak q)_{\strok A}\neq (\mathfrak q\otimes \mathfrak p)_{\strok A}$ \     \ implies \ \  $D_\frak p(\bar b)\subsetneq D_\frak p(\bar b')$. 

\item  \  $\mathfrak q$ is asymmetric.
\end{enumerate}
\end{lem}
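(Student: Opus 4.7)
My plan is to isolate one marginal characterization of the two product types, then run mirror constructions for (i) and (ii), and deduce (iii) via an automorphism argument.

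The key observation I will establish up front is that for $a\models p$ and $\bar c\models\frak q_{\strok A}$, the joint type $\tp(a,\bar c/A)$ is pinned down by either marginal, so
$$\tp(a,\bar c/A)=(\frak q\otimes\frak p)_{\strok A}\ \iff\ a\in I_\frak p(\bar c),\qquad \tp(a,\bar c/A)=(\frak p\otimes\frak q)_{\strok A}\ \iff\ \bar c\models\frak q_{\strok Aa}.$$
In case (i) the two products coincide, so the right-hand conditions become equivalent; in case (ii) they differ, so the canonical realization of $(\frak p\otimes\frak q)_{\strok A}$ satisfies $\bar c\models\frak q_{\strok Aa}$ but necessarily $a\in D_\frak p(\bar c)$, since otherwise both marginal characterizations would hold and the two product types would collapse.

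For part (ii), I would pick $a,\bar b\in\Mon$ realizing $(\frak q\otimes\frak p)_{\strok A}$, giving $a\in I_\frak p(\bar b)$; then pick $\bar b'\in\Mon$ realizing the $A$-invariant extension $\frak q_{\strok A\bar ba}$. Restriction produces $\bar b'\models\frak q_{\strok A\bar b}$ (so $(\bar b,\bar b')\models(\frak q^2)_{\strok A}$) together with $\bar b'\models\frak q_{\strok Aa}$, so $\tp(a,\bar b'/A)=(\frak p\otimes\frak q)_{\strok A}$, and the observation forces $a\in D_\frak p(\bar b')$. Thus $a\in D_\frak p(\bar b')\setminus D_\frak p(\bar b)$, and Lemma \ref{Ltotal} yields $D_\frak p(\bar b)\subsetneq D_\frak p(\bar b')$. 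Because this inclusion is determined by $\tp(\bar b,\bar b'/A)$ alone, it transfers to every realization of $(\frak q^2)_{\strok A}$. For part (i) I would run the mirror construction: using $p\nwor\frak q_{\strok A}$, choose a completion of $p\cup\frak q_{\strok A}$ over $A$ distinct from the common product, providing $a,\bar b\in\Mon$ with $a\in D_\frak p(\bar b)$ and $\bar b\nmodels\frak q_{\strok Aa}$. Then $\bar b'\in\Mon$ realizing $\frak q_{\strok A\bar ba}$ again makes $(\bar b,\bar b')$ a Morley pair with $\bar b'\models\frak q_{\strok Aa}$, and the case-(i) bi-implication forces $a\in I_\frak p(\bar b')$; so $a\in D_\frak p(\bar b)\setminus D_\frak p(\bar b')$, whence $D_\frak p(\bar b')\subsetneq D_\frak p(\bar b)$ by Lemma \ref{Ltotal}.

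Part (iii) then follows: cases (i) and (ii) are exhaustive, so every Morley pair gives a strict inclusion between $D_\frak p(\bar b)$ and $D_\frak p(\bar b')$. If $\frak q$ were symmetric, then $\tp(\bar b,\bar b'/A)=\tp(\bar b',\bar b/A)$ for every Morley pair, so some $\sigma\in\Aut(\Mon/A)$ swaps $\bar b$ and $\bar b'$; applying $\sigma$ to the strict inclusion reverses it, a contradiction. The hardest step will be the initial marginal characterization: one must verify that each specific product type is pinned down by exactly one of the two marginal conditions, and that in case (ii) the mismatch forces the canonical realization of $(\frak p\otimes\frak q)_{\strok A}$ to sit in $D_\frak p(\bar c)$ rather than $I_\frak p(\bar c)$. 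Once this is in place, the remaining work is routine realization of $A$-invariant extensions in the saturated monster.
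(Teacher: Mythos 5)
Your proposal is correct and follows essentially the same route as the paper: both proofs choose an element $a$ suitably dependent on or independent from $\bar b$, realize $\bar b'\models\frak q_{\strok A\bar b a}$, invoke commutativity or non-commutativity to place $a$ on the opposite side of $D_\frak p(\bar b')$, and finish with Lemma \ref{Ltotal} and an $A$-invariance transfer. The "key observation" biconditional is just an explicit packaging of the commutativity argument the paper uses inline, and your automorphism argument for (iii) makes precise the step the paper leaves implicit.
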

\begin{proof}Fix $\bar b\models \frak q_{\strok A}$. Then, by Lemma \ref{Ld<i}(i), $p\nwor\mathfrak q_{\strok A}$ implies    $D_\frak p(\bar b)\neq\emptyset$. 

\smallskip (i) Choose   $a\in D_\frak p(\bar b)$ such that  $\bar b'\models \mathfrak q_{\strok A\bar ba}$. Then $\bar b'\models \mathfrak q_{\strok A a}$ so, by commutativity, $a\models \mathfrak p_{\strok A\bar b'}$; in particular, $a\notin D_\frak p(\bar b')$. Hence $a\in D_\frak p(\bar b)\smallsetminus D_\frak p(\bar b')$ and, by Lemma \ref{Ltotal}, we derive $D_\frak p(\bar b')\subsetneq D_\frak p(\bar b)$. Since the inclusion does not depend on the particular choice of $a$, it holds for all $(\bar b,\bar b')\models (\mathfrak q^2)_{\strok A}$.

\smallskip (ii) Choose $a$ such that  $(\bar b,a, \bar b')\models (\mathfrak q\otimes\mathfrak p\otimes\mathfrak q)_{\strok A}$. By Lemma \ref{Ld<i} $a\in I_\frak p(\bar b)$ implies $D_\frak p(\bar b)<a$. $(\mathfrak p\otimes\mathfrak q)_{\strok A}\neq (\mathfrak q\otimes \mathfrak p)_{\strok A} $ and $\bar b'\models\mathfrak q_{\strok Aa}$ together imply $a\nmodels \mathfrak p_{\strok A\bar b'}$. Hence $a\in D_\frak p(\bar b')$. By combining that with $D_\frak p(\bar b)<a$ and applying Lemma \ref{Ltotal} we deduce $D_\frak p(\bar b)\subsetneq D_\frak p(\bar b')$. 

\smallskip (iii) By (i) and (ii) $\tp(b,b'/A)\neq\tp(b',b/A)$, so $q$ is asymmetric. 
\end{proof}

Now we can prove a version of Theorem \ref{T_asym_orth_uvod}.

\begin{thm}\begin{enumerate}[(i)]
\item   A regular asymmetric type is orthogonal to any invariant symmetric type.
\item  Both symmetry and asymmetry are preserved under non-orthogonality of regular types. 
\item $\nor$ is an equivalence relation on the set of asymmetric, regular types.
\end{enumerate} 
\end{thm}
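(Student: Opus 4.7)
The plan is to reduce each part to Lemma~\ref{LDworsas} and Proposition~\ref{prop_nwor_transitivity} via a base-enlargement trick that converts global $\nor$ into $\nwor$ over a specific small base. First I would establish the following absorption step: if $\frak p,\frak q$ are global $A$-invariant types with $\frak p\nor\frak q$, then there is a finite tuple $\bar c$ from $\Mon$ such that $\frak p_{\strok A\bar c}\nwor\frak q_{\strok A\bar c}$. This is immediate: pick realizations $(\bar a_1,\bar b_1),(\bar a_2,\bar b_2)$ of $\frak p(\bar x)\cup\frak q(\bar y)$ with distinct types over $\Mon$, fix a formula $\phi(\bar x,\bar y,\bar c)$ witnessing the difference, and observe that by $A$-invariance both $\bar a_i$ realize $\frak p_{\strok A\bar c}$ and both $\bar b_i$ realize $\frak q_{\strok A\bar c}$, so the two pairs give two distinct completions of $\frak p_{\strok A\bar c}(\bar x)\cup\frak q_{\strok A\bar c}(\bar y)$. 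Crucially, replacing $A$ by $A\bar c$ preserves invariance of each type and, by Remark~\ref{R_reg_implies_closure}(iii), also preserves regularity and $A$-asymmetry of any originally regular $A$-asymmetric type.

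For (i), suppose toward contradiction that $\frak p$ is regular and $A$-asymmetric, $\frak q$ is invariant and symmetric, and $\frak p\nor\frak q$. After enlarging $A$ to contain a base of invariance of $\frak q$, and then applying the absorption step, one obtains $\frak p_{\strok A}\nwor\frak q_{\strok A}$ over a base where $\frak p$ is still regular and $A$-asymmetric. Lemma~\ref{LDworsas}(iii) then forces $\frak q$ to be asymmetric, a contradiction. Part (ii) is an immediate consequence: if $\frak p,\frak q$ are regular with $\frak p\nor\frak q$ and one is symmetric while the other is asymmetric, then (i) yields $\frak p\perp\frak q$, a contradiction.

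For (iii), reflexivity is clear: since $\frak p$ is non-algebraic, $\frak p(\bar x)\cup\frak p(\bar y)$ has at least the two distinct completions given by $\bar x=\bar y$ and by the $\otimes$-product $\frak p\otimes\frak p$, and symmetry of the relation is built into the definition. For transitivity, let $\frak p,\frak q,\frak r$ be asymmetric regular with $\frak p\nor\frak q$ and $\frak q\nor\frak r$. Pick $A$ over which $\frak q$ is regular and $A$-asymmetric and $\frak p,\frak r$ are $A$-invariant; apply the absorption step to each of the two non-orthogonalities and take the union of the resulting bases, obtaining a common enlargement $A^*\supseteq A$ such that $\frak p_{\strok A^*}\nwor\frak q_{\strok A^*}$ and $\frak r_{\strok A^*}\nwor\frak q_{\strok A^*}$, with $\frak q$ still regular and $A^*$-asymmetric. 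Proposition~\ref{prop_nwor_transitivity}, applied with $\frak q$ as the central regular asymmetric type, then gives $\frak p_{\strok A^*}\nwor\frak r_{\strok A^*}$, whence $\frak p\nor\frak r$.

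The main obstacle is really just the careful bookkeeping in the absorption step: one must verify that absorbing finitely many extra parameters into $A$ preserves invariance of all the types involved, regularity of $\frak q$, and asymmetry of $\frak p$ and $\frak q$. Once these routine preservation statements are in hand, the theorem follows cleanly from Lemma~\ref{LDworsas} and Proposition~\ref{prop_nwor_transitivity}.
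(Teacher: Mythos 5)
Your treatment of (i) and (ii) is essentially the paper's argument (reduce to Lemma~\ref{LDworsas} via a base--enlargement step), and the absorption step you state is correct and worth spelling out: since the two witnessing pairs realize the \emph{global} types $\frak p$ and $\frak q$, they continue to realize $\frak p_{\strok B}$ and $\frak q_{\strok B}$ for \emph{every} small $B\supseteq A\bar c$, so the $\nwor$ you obtain over $A\bar c$ persists under further enlargement of the base. Your preservation claims (invariance, regularity, $A$-asymmetry all go up with the base) are true, though Remark~\ref{R_reg_implies_closure}(iii) is not quite what establishes them; they follow directly from the definitions.

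The gap is in the last sentence of your proof of (iii). You derive $\frak p_{\strok A^*}\nwor\frak r_{\strok A^*}$ from Proposition~\ref{prop_nwor_transitivity} and then write ``whence $\frak p\nor\frak r$''. That inference is not automatic, and the paper's own Example~\ref{E_wor_neq_perp} shows exactly why: in a dense linear order, the types $\frak p$ of $+\infty$ and $\frak q$ of $-\infty$ are both regular, $\emptyset$-asymmetric and $\emptyset$-invariant, and $\frak p_{\strok\emptyset}\nwor\frak q_{\strok\emptyset}$, yet $\frak p\perp\frak q$. So $\nwor$ over a small base over which both types are invariant (and even regular, asymmetric) does \emph{not} in general yield global $\nor$. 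The reason your absorption trick does not reverse here is that the two distinct completions produced in the proof of Proposition~\ref{prop_nwor_transitivity} are realized only by tuples satisfying the restrictions $\frak p_{\strok A^*}$, $\frak r_{\strok A^*}$, not the global types $\frak p,\frak r$; so unlike the forward absorption, one cannot pass the disagreement up to the monster. To close the gap you would need an additional argument, e.g.\ that the realizations in Proposition~\ref{prop_nwor_transitivity} can be chosen to realize the global types, or that in the commuting case ($\frak p\otimes\frak r=\frak r\otimes\frak p$) the disagreement witnessed over $A^*$ forces a second global completion. The paper's own proof of (iii) (``follows directly from Proposition~\ref{prop_nwor_transitivity}'') is equally terse on this point, so you are following its lead, but you should not present the final implication as immediate.
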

\begin{proof}
(i) is direct consequence of Lemma \ref{LDworsas}. (ii) follows from (i), and (iii) follows directly from Proposition \ref{prop_nwor_transitivity}.
\end{proof}

\section{Regularity and $\nwor$}\label{X_strongreg}

In this section we study preservation of invariants of asymmetric regular types under $\nwor$. In general there may be no connection between them as the following  example shows.

\begin{exm}\label{Ex_nonconvex_1} 
Consider the structure $(\Mon,<,P_i)_{i\in\omega}$ from Example  \ref{Ex_nonconvex}.  $\mathfrak p$ is the type of an infinitely large element satisfying  $\{\neg P_i(x)\,|\,i\in\omega\}$, and $\mathfrak p_n$ is the type of an infinitely large element satisfying $P_n(x)$. Then $\frak p\nor \frak p_n$ because  $\frak p(x)\cup\frak p_n(y)$ is consistent with either of $x<y$ and $y<x$;  from the same reason $\frak p_{\strok  A}\nwor \frak p_n\,_{\strok A}$ holds. There are arbitrarily large models omitting $\frak p_{\strok\emptyset}(x)$. More precisely,  $M\smallsetminus\frak p_{\strok\emptyset}(M)\prec M$ holds for all $M$. Therefore  $\Inv_{\frak p_n,\emptyset}(M)$ can be an arbitrary dense linear order without end points, while $\Inv_{\frak p,\emptyset}(M)$ is empty.  
\end{exm}

The situation is much better when we assume that the types are convex.

\begin{assu}\label{ass_sr} \ Throughout the section we assume:
\begin{enumerate}
\item $\frak p(x)$  and $\frak q(y)$  are regular, $A$-asymmetric and convex over $A$.
\item Both the $A$-asymmetry and convexity are witnessed by $<_{\mathfrak p}$ and $<_{\mathfrak q}$ respectively.
\item $p\nwor q$ \ \ where $p$ and $q$ denote the corresponding restrictions of $\frak p$ and $\frak q$ to $A$.
\end{enumerate}
\end{assu}

\noindent We will consider two essentially distinct kinds of non-orthogonality:

\medskip {\bf Bounded} \ \ There are $a\models p$, $b\models q$ and  $\theta(x,y)\in \tp(a,b/A)$ such that $\theta(a,\Mon)$ is $\frak q$-bounded. 

\medskip {\bf Unbounded} \ \ Otherwise.

\smallskip\noindent 
These two types will be considered in separate subsections where the corresponding parts of Theorem \ref{T_nor_uvod} are proved.

\subsection{The bounded case} \label{X_bdd_nwor}

\begin{exm}\label{E_bdd_nwor} \ Examples of  bounded $\nwor$.

(1) \ Consider    $(\mathbb Q\times\{0,1\}, P(M),Q(M),<_\frak p,<_\frak q, S)$ where $(P(M),<_\frak p)$ and $(Q(M),<_\frak q)$ are the naturally ordered $\mathbb Q\times \{0\}$ and $\mathbb Q\times \{1\}$ respectively, and $S((a,0),(b,1))$ holds iff $a=b$.   $\frak p$  and $\frak q$  are types of infinitely large elements containing $P(x)$ and $Q(x)$ respectively.   $\frak p, \frak q$ are regular,   $\emptyset$-asymmetric, and $\frak p_{\strok \emptyset}\nwor\frak q_{\strok \emptyset}$. Morley sequences are increasing sequences and (in any model) the isomorphism between   $\Inv_\frak p(M)$ and $\Inv_\frak q(M)$ is determined by $S$ viewed as a function. Note that $\frak p\otimes\frak q\neq \frak q\otimes\frak p$.

\smallskip 
(2) \ Consider  $(M, P(M),Q(M),<_\frak p,<_\frak q, S)$ where   $(P(M),<_\frak p)$ is the lexicographically ordered $\omega_1\times \mathbb Q$ and $(Q(M),<_\frak q)$ is the lexicographically ordered $\omega_1^*\times \mathbb Q$ (here $\omega_1^*=\{\alpha^*\,|\,\alpha\in\omega_1\}$  is the reversely ordered $\omega_1$). $M$ is the disjoint union of $P(M)$ and $Q(M)$, and $S((\alpha,r),(\beta^*,s))$ holds if and only if $\alpha= \beta$ and $r= s$. Let $\frak p$ be the type of an infinitely $<_\frak p$-large element of $P(M)$ and let $\frak q$ be the type of an infinitely $<_\frak q$-large element of $Q(M)$. Then Morley sequences in $\frak p$ ($\frak q$) over $\emptyset$ are $<_\frak p$-increasing ($<_\frak q$-increasing). $\Inv_\frak p(M)$ is isomorphic to $(\omega_1\times \mathbb Q ,<_\frak p)$, $\Inv_\frak q(M)$ is isomorphic to $(\omega_1^*\times \mathbb Q,<_\frak q)$; they are not isomorphic, but they are anti-isomorphic by $S$.        
 Note that $\frak p\otimes\frak q= \frak q\otimes\frak p$. 
 
 \smallskip (3) $\frak p$ and $\frak q$ in both (1) and (2)  are simple over $\emptyset$. Non-simple examples can be easily made by `multiplying lexicographically' each structure   by $(\mathbb Z,<)$, and requiring that $S((a,m),(b,n)$ holds iff $S(a,b)$ holds in the original structure. Here $\eps_\frak p((a,n))$ is the copy of $\mathbb Z$ around $a$ and   $S$ induces  a function mapping copies of $\mathbb Z$ around realizations of $\frak p_{\strok A}$ to copies of $\mathbb Z$ around realizations of $\frak q_{\strok A}$; in other words, it maps $\Lin_A(\frak p)$ to $\Lin_A(frak q)$ and is an (anti) isomorphism of the corresponding linear orders.   
\end{exm}

Throughout the subsection   assume that   $p\nwor q$ is a bounded type of non-orthogonality: there are $a\models p$, $b\models q$ and  $\theta(x,y)\in \tp(a,b/A)$ such that $\theta(a,\Mon)$ is $\frak q$-bounded, i.e. $\pi_\frak q(\theta(a,\Mon)\cap q(\Mon))$ is bounded in $\Lin_A(\frak q)$. We will show that the situation from the previous examples holds: $\Inv_A(\frak p)$ is either isomorphic or anti-isomorphic to $\Inv_A(\frak q)$, and isomorphism (anti-isomorphism) is naturally induced by $\theta$.

\begin{lem}\label{Lbounded_unbounded}
Suppose that $a$ realizes $p$, $b$ realizes $q$,  $\theta(x,y)\in\tp(a,b/A)$, and that $\theta(a,y)$ is $\frak q$-bounded over $A$. Then:
\begin{enumerate}[(i)]
\item If $(a,a')$ is a Morley sequence in $\frak p$ over $A$ then $\pi_{\frak q}(\theta(a,\Mon)\cap q(\Mon))$  and $\pi_{\frak q}(\theta(a',\Mon)\cap q(\Mon))$ \ are disjoint.

\item  $ \pi_{\frak p}(\theta(\Mon,b)\cap p(\Mon))=\{\eps_\frak p(a)\}$; \ in particular, $\theta(x,b)$ is  $\frak p$-bounded. 

\item $\theta(a,\Mon)\cap q(\Mon)\subseteq \eps_\frak p(b)$ and $\theta(\Mon,b)\cap p(\Mon)\subseteq \eps_\frak p(a)$.

\item $a\in \Sem_A(b)$ and $b\in\Sem_A(a)$. 
\end{enumerate}  
\end{lem}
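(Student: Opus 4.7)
My plan is to prove the items in the order (iii), (i), (ii), (iv). Once the first half of (iii) is established, the remaining items all fall out easily, so that step is where the main obstacle lies.

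Before attacking (iii), I would record the auxiliary inclusion $\theta(a,\Mon)\cap q(\Mon)\subseteq D_{\mathfrak q}(a)$. Indeed, if some $c$ in this set realized $\mathfrak q_{\strok Aa}$, then conjugating $c$ by a chain of $Aa$-automorphisms sending it to realizations of $\mathfrak q_{\strok Aa}$ of arbitrarily large $\eps_{\mathfrak q}$-class would produce an unbounded chain of solutions of $\theta(a,y)$ in $q(\Mon)$, contradicting $\mathfrak q$-boundedness of $\theta(a,y)$.

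For the first half of (iii), namely $\theta(a,\Mon)\cap q(\Mon)\subseteq\eps_{\mathfrak q}(b)$, suppose for contradiction that some $c\in\theta(a,\Mon)\cap q(\Mon)$ has $\eps_{\mathfrak q}(c)\neq\eps_{\mathfrak q}(b)$; without loss of generality $(c,b)\models(\mathfrak q^2)_{\strok A}$. Apply Lemma \ref{LDworsas} with the roles of $\mathfrak p$ and $\mathfrak q$ exchanged to this $\mathfrak q$-Morley pair: one obtains a strict inclusion between $D_{\mathfrak p}(c)$ and $D_{\mathfrak p}(b)$, with direction determined by whether $\mathfrak p$ and $\mathfrak q$ commute. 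Lemma \ref{L_p_bounded_fla}(ii) applied to $\mathfrak q$ (convex over $A$) yields semi-isolations $b,c\in\Sem_A(a)$, witnessed by formulas of the form $\phi_b(y)\wedge\theta(a,y)$ and $\phi_c(y)\wedge\theta(a,y)$. The delicate point is to coordinate these two witnesses so as to force $a$ onto both sides of the $D_{\mathfrak p}$-cut separating $b$ from $c$, in contradiction with the strict inclusion; this coordination is where the bulk of the argument will go.

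Granting the first half of (iii), the other items follow quickly. For (i): (iii) collapses $\pi_{\mathfrak q}(\theta(a,\Mon)\cap q(\Mon))$ to the single class $\eps_{\mathfrak q}(b)$ and analogously to $\eps_{\mathfrak q}(b')$ for $a'$, and the symmetric form of Lemma \ref{LDworsas} applied to $(a,a')\models(\mathfrak p^2)_{\strok A}$ yields a strict inclusion between $D_{\mathfrak q}(a)$ and $D_{\mathfrak q}(a')$; combining with the containments $\eps_{\mathfrak q}(b)\subseteq D_{\mathfrak q}(a)$ and $\eps_{\mathfrak q}(b')\subseteq D_{\mathfrak q}(a')$ (from the auxiliary inclusion and the $\eps_{\mathfrak q}$-closure of Lemma \ref{LDscl_closed}) forces $\eps_{\mathfrak q}(b)\neq\eps_{\mathfrak q}(b')$. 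For (ii): if some $a^*\in\theta(\Mon,b)\cap p(\Mon)$ had $\eps_{\mathfrak p}(a^*)\neq\eps_{\mathfrak p}(a)$, then without loss of generality $(a,a^*)$ is Morley in $\mathfrak p$ over $A$, and $b\in\theta(a,\Mon)\cap\theta(a^*,\Mon)\cap q(\Mon)$ would place $\eps_{\mathfrak q}(b)$ in both $\pi_{\mathfrak q}$-images, contradicting (i); this also delivers the second half of (iii). Finally (iv) follows from two applications of Lemma \ref{L_p_bounded_fla}(ii): to $\mathfrak q$, giving $b\in\Sem_A(a)$, and to $\mathfrak p$ (with $\theta(x,b)$ now known $\mathfrak p$-bounded by (ii)), giving $a\in\Sem_A(b)$.
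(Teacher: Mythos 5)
Your reordering --- proving (iii) first, then (i), (ii), (iv) --- does not go through, for two reasons. First, the proof of (iii) is never actually delivered: you state that the ``coordination of the two witnesses'' is ``where the bulk of the argument will go'', but that coordination is precisely the content of the lemma and is not obviously achievable from semi-isolation alone. The obstacle is that $b,c\in\Sem_A(a)$ does not locate $a$ relative to the $D_{\mathfrak p}$-cuts of $b$ and $c$; to place $a$ in $D_{\mathfrak p}(b)$ one would need $\theta(x,b)$ to be $\mathfrak p$-bounded, which is exactly part (ii) and hence unavailable in your intended order. Second, your derivation of (i) from (iii) is invalid as stated: from $\eps_{\mathfrak q}(b)\subseteq D_{\mathfrak q}(a)$, $\eps_{\mathfrak q}(b')\subseteq D_{\mathfrak q}(a')$ and a strict inclusion between $D_{\mathfrak q}(a)$ and $D_{\mathfrak q}(a')$, nothing prevents both $\eps_{\mathfrak q}$-classes from lying inside the smaller of the two sets, so $\eps_{\mathfrak q}(b)\neq\eps_{\mathfrak q}(b')$ is not forced.

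The paper proceeds the other way, proving (i) first and directly. Since every realization of $p$ is conjugate to $a$ over $A$, the formula $\theta(a'',y)$ is $\mathfrak q$-bounded for \emph{every} $a''\models p$; hence the convex closures of the sets $\pi_{\mathfrak q}(\theta(a'',\Mon)\cap q(\Mon))$ are bounded convex subsets of $\Lin_A(\mathfrak q)$ whose union covers all of $\Lin_A(\mathfrak q)$, so one can choose $a_1,a_2,a_3\models p$ with these convex sets strictly increasing. If (i) failed, every Morley pair would give overlapping convex sets; then $a_4\models\mathfrak p_{\strok Aa_1a_2a_3}$ would have its convex set meeting all three and hence containing the middle one, and an $a_0$ with $\eps_{\mathfrak p}(a_0)<_{\mathfrak p}\eps_{\mathfrak p}(a_1),\eps_{\mathfrak p}(a_2)$ would then force the sets of $a_1$ and $a_2$ to overlap, a contradiction. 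Given (i), parts (ii) and (iii) are immediate by exactly the single-witness observation you use to derive (ii) from (i), and your treatment of (iv) coincides with the paper's. Your auxiliary inclusion $\theta(a,\Mon)\cap q(\Mon)\subseteq D_{\mathfrak q}(a)$ is correct, but by itself it is far from (iii).
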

\begin{proof}
(i) To simplify notation denote $\pi_{\frak q}(\theta(a,\Mon)\cap q(\Mon))$ simply by $\pi_{\frak q}(a)$. Suppose that the conclusion fails. Then whenever $(a,a')$ is a Morley sequence    $\pi_{\frak q}(a)\cap\pi_{\frak q}(a')\neq\emptyset$ holds. Let $\pi^c_\frak p(x)$ denote the convex closure of $\pi_\frak p(x)$ in $\Lin_A(\frak p)$. Then: whenever $(a,a')$ is a Morley sequence    $\pi_{\frak q}(a)\cap\pi_{\frak q}(a')\neq\emptyset$ holds.

Since  $\pi^c_{\frak q}(-)$ is bounded for all realizations of $p$, there are  $a_1,a_2,a_3$ realizing $p$ such that $\pi^c_{\frak q}(a_1)<_{\frak q}\pi^c_{\frak q}(a_2)<_{\frak q}\pi^c_{\frak q}(a_3)$. 
Choose $a_4$ realizing $\frak p_{\strok Aa_1a_2a_3}$. Then  $(a_i,a_4)$ is a Morley sequence for $i=1,2,3$ so   $\pi^c_{\frak q}(a_4)$ meets 
each   $\pi^c_{\frak q}(a_i)$. Since all of them are convex the middle one has to be fully contained in $\pi^c_{\frak q}(a_4)$:   $\pi^c_{\frak q}(a_2)\subsetneq \pi^c_{\frak q}(a_4)$.  That   inclusion holds for any pair realizing a Morley sequence in place of $(a_2,a_4)$. Now take $a_0$ such that $\eps_\frak p(a_0)<_{\frak p}\eps_\frak p(a_1)\cup\eps_\frak p(a_2)$. Then $(a_0,a_1)$ and $(a_0,a_2)$ are Morley sequences, so $\pi^c_{\frak q}(a_0)$ is contained in each of $\pi^c_{\frak q}(a_1)$ and $\pi^c_{\frak q}(a_2)$, so $\pi^c_{\frak q}(a_1)\cap\pi^c_{\frak q}(a_2)\neq\emptyset$.\ A contradiction.

\smallskip 
(ii) By part (i) $\theta(\Mon,b)$ does not contain a pair realizing  a Morley sequence in $\frak p$ over $A$. Hence any element of $\theta(\Mon,b)\cap p(\Mon)$ is in $\eps_\frak p(a)$ so  $ \pi^c_{\frak p}(\theta(\Mon,b)\cap p(\Mon))=\{\eps_\frak p(a)\}$. Clearly, $\theta(x,\bar b)$ is $\frak p$-bounded.  

\smallskip 
(iii) Follows immediately from part (ii) applied to $\theta(a,y)$ and $\theta(x,b)$. 

\smallskip
(iv) Since $\theta(a,y)$ is $\frak q$-bounded, $\theta(x,b)$ is $\frak p$-bounded and $\frak p$ and $\frak q$ are convex, by Lemma \ref{L_p_bounded_fla}(ii) we can choose $\varphi_\frak p(x)\in p(x)$ and $\varphi_\frak q(y)\in q(y)$ such that $\varphi_\frak p(x)\wedge\theta(x,b)$ witnesses $a\in\Sem_A(b)$ and $\varphi_\frak q(y)\wedge\theta(a,y)$ witnesses $b\in\Sem_A(a)$.
\end{proof} 
 
We say that a formula $\theta(x,y)$ is {\em $(\frak p,\frak q)$-bounded (over $A$)} if it is consistent with $p(x)\cup q(y)$, $\theta(a,\Mon)\subseteq\eps_\frak q(b)$,  and $\theta(\Mon,b)\subseteq\eps_\frak p(a)$ for some (any) $a\models p$ and $b\models q$ satisfying $\models \theta(a,b)$.    

\begin{rmk}\label{rmk_both_bounded}  
By Lemma \ref{Lbounded_unbounded}, for any formula $\theta(x,y)$ consistent with $p(x)\cup q(y)$, the following conditions are equivalent:
\begin{enumerate}
\item $\theta(a,y)$ is $\frak q$-bounded for some $a\models p$;
\item $\theta(x,b)$ is $\frak p$-bounded for some $b\models q$;
\item There are   $\varphi_{\frak p}(x)\in p(x)$ and   $\varphi_{\frak q}(y)\in q(y)$ such that   $\varphi_{\frak p}(x)\wedge\varphi_{\frak q}(y)\wedge\theta(x,y)$ is $(\frak p,\frak q)$-bounded. 
\end{enumerate} 
The equivalence of the first two follows from part (iii) of the lemma,  and the equivalence of the third with them follows from (the proof of) (iv).     
\end{rmk}

\begin{lem}\label{LFG_definition}
Suppose that  $\theta(x,y)\in\tp(a,b/A)$ is   $(\frak p,\frak q)$-bounded over $A$. Then for all $a,a'$ realizing $\frak p$: \ 
  $a'\in \eps_{\frak p}(a)$ \ if and only if \  $\pi_{\frak q}(\theta(a,\Mon)\cap q(\Mon))=\pi_{\frak q}(\theta(a',\Mon)\cap q(\Mon))$.
\end{lem}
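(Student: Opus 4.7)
The approach is to first unpack what $(\mathfrak p,\mathfrak q)$-boundedness forces on the set $\theta(a,\Mon)\cap q(\Mon)$. By the definition, together with $A$-homogeneity of $\Mon$, for every $a\models p$ there exists $b\models q$ with $\models\theta(a,b)$ (the existence of a witness for some $a_0$ transports along an $A$-automorphism sending $a_0$ to $a$, and $q$ is $A$-invariant). Fix such a $b=b_a$; by $(\mathfrak p,\mathfrak q)$-boundedness, $\theta(a,\Mon)\cap q(\Mon)\subseteq\eps_\frak q(b_a)$, so $\pi_\frak q(\theta(a,\Mon)\cap q(\Mon))$ is the nonempty singleton $\{\eps_\frak q(b_a)\}$. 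Symmetrically, $\pi_\frak p(\theta(\Mon,b)\cap p(\Mon))=\{\eps_\frak p(a_b)\}$ for every $b\models q$. Thus the lemma reduces to showing that the map $F\colon p(\Mon)\to\Lin_A(\frak q)$ sending $a$ to the unique class $\eps_\frak q(b_a)$ factors through $\eps_\frak p$ and is injective on $\eps_\frak p$-classes.

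I would prove both implications by contrapositive from Lemma~\ref{Lbounded_unbounded}(i) and its symmetric counterpart. For $(\Leftarrow)$: if $a'\notin\eps_\frak p(a)$, then one of $(a,a')$, $(a',a)$ is a Morley sequence in $\frak p$ over $A$, and Lemma~\ref{Lbounded_unbounded}(i) yields that $\pi_\frak q(\theta(a,\Mon)\cap q(\Mon))$ and $\pi_\frak q(\theta(a',\Mon)\cap q(\Mon))$ are disjoint; being nonempty singletons they must then differ. For $(\Rightarrow)$: assume the singletons differ, with witnesses $b,b'$ satisfying $\theta(a,b)$ and $\theta(a',b')$. Since $\Lin_A(\frak q)$ is linearly ordered by $\mathbb L_A(\frak q)$, WLOG $\eps_\frak q(b)<_\frak q\eps_\frak q(b')$, so $(b,b')$ is a Morley sequence in $\frak q$ over $A$. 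Applying the symmetric version of Lemma~\ref{Lbounded_unbounded}(i) (legitimate because $(\mathfrak p,\mathfrak q)$-boundedness is symmetric in $\frak p$ and $\frak q$ by Remark~\ref{rmk_both_bounded}), the sets $\pi_\frak p(\theta(\Mon,b)\cap p(\Mon))$ and $\pi_\frak p(\theta(\Mon,b')\cap p(\Mon))$ are disjoint; by the initial observation they equal $\{\eps_\frak p(a)\}$ and $\{\eps_\frak p(a')\}$, so $a'\notin\eps_\frak p(a)$.

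There is no serious obstacle: the argument is essentially bookkeeping about singletons once one is willing to invoke Lemma~\ref{Lbounded_unbounded}(i) on both sides. The only point that needs a sentence of care is justifying the use of the $\frak p\leftrightarrow\frak q$ swapped form of that lemma; this is automatic from its proof together with the symmetry of boundedness recorded in Remark~\ref{rmk_both_bounded}, so I would simply cite that equivalence and apply the swapped statement verbatim.
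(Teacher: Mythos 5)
Your proof is correct, and it takes a genuinely cleaner route than the paper's. The singleton observation you make at the start is exactly what the paper needs too, and you are right that, once established, the lemma becomes a bookkeeping exercise.

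Where you diverge: for $(\Leftarrow)$ (in contrapositive form) the paper does \emph{not} invoke Lemma~\ref{Lbounded_unbounded}(i); instead it runs a fresh automorphism argument. Starting from $a'\notin\eps_\frak p(a)$ and (toward contradiction) $\pi_\frak q(a)=\pi_\frak q(a')=\{\eps_\frak q(b)\}$, it transports this to every Morley pair via $A$-automorphisms, moves $b$ to an arbitrary $b_1$ with $\eps_\frak q(b_1)\neq\eps_\frak q(b)$, and then interleaves the two Morley pairs using linearity of $\Lin_A(\frak p)$ to force $\eps_\frak q(b)=\eps_\frak q(b_1)$. For $(\Rightarrow)$ the paper then just cites the already-proved $(\Leftarrow)$ with $\frak p$ and $\frak q$ exchanged. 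Your version delegates all of that combinatorics to Lemma~\ref{Lbounded_unbounded}(i) and its swapped form, so both directions become one-line citations once the singleton fact and the hypothesis-check ($\theta(a,y)$ $\frak q$-bounded and $\theta(x,b)$ $\frak p$-bounded follow immediately from $(\frak p,\frak q)$-boundedness) are recorded. This is more modular and arguably makes Lemma~\ref{Lbounded_unbounded}(i) do the work it was designed to do, rather than re-deriving a close cousin of its content with an automorphism argument. The only caveat worth flagging (which you do) is that the swapped application of Lemma~\ref{Lbounded_unbounded}(i) must be licensed: Assumption~\ref{ass_sr} and the boundedness hypotheses are all $\frak p\leftrightarrow\frak q$ symmetric, and Remark~\ref{rmk_both_bounded} (or, more directly, the definition of $(\frak p,\frak q)$-boundedness itself) gives that $\theta(x,b)$ is $\frak p$-bounded, so the swap is legitimate.
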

\begin{proof} To simplify notation, for $a\models p$ and $b\models q$, we write $\pi_\frak p(b)$ and $\pi_\frak q(a)$ instead of $\pi_{\frak p}(\theta(\Mon,b)\cap p(\Mon))$ and $\pi_{\frak q}(\theta(a,\Mon)\cap q(\Mon))$, respectively.

$\Leftarrow)$ \ Suppose that $a'\notin \eps_{\frak p}(a)$ and $\pi_{\frak q}(a)= \pi_{\frak q}(a')=\{\eps_{\frak q}(b)\}$ and work for a contradiction. Without loss of generality, assume  that $\eps_{\frak p}(a)<_\frak p\eps_{\frak p}(a')$, i.e. $(a,a')$ is a Morley sequence in $\frak p$ over $A$. Then  $\pi_\frak q(c)=\pi_\frak q(c')$ holds for every Morley sequence $(c,c')$ in $\frak p$ over $A$. Choose $b_1$ such that $\eps_{\frak q}(b)\neq\eps_{\frak q}(b_1)$ and let $f\in\Aut_A(\Mon)$ which maps $(a,a',b)$ to $(a_1,a_1',b_1)$. Then $\pi_{\frak q}(a_1)=\pi_{\frak q}(a_1')=\eps_{\frak q}(b_1)$. Since both $(a,a')$ and $(a_1,a_1')$ are Morley sequences, at least one of $(a,a_1')$ and $(a_1,a')$ is a Morley sequence; assume that $(a,a_1')$ is. Then we have  $\pi_{\frak q}(a)=\pi_{\frak q}(a_1')$, i.e. $\eps_{\frak q}(b)=\eps_{\frak q}(b_1)$. A contradiction. In a similar way we get a contradiction in the case when $(a_1,a')$ is a Morley sequence.

\smallskip 
$\Rightarrow)$ \ Assume on the contrary that $a'\in\eps_\frak p(a)$ and $\pi_\frak q(a)= \{\eps_\frak q(b)\}$, $\pi_\frak q(a')= \{\eps_\frak q(b')\}$ and $\eps_\frak q(b)\neq \eps_\frak q(b')$. Then $b'\notin\eps_\frak q(b)$, hence, similarly as in the proof of $\Rightarrow)$, $\pi_\frak p(b)\neq \pi_\frak p(b')$. Since $\pi_\frak p(b)= \{\eps_\frak p(a)\}$ and $\pi_\frak p(b')= \{\eps_\frak p(a')\}$, we get $\eps_\frak p(a)\neq \eps_\frak p(a')$, and therefore $a'\notin \eps_\frak p(a)$, which is a contradiction.
\end{proof}

An immediate consequence of Lemma \ref{LFG_definition} is that for any $(\frak p,\frak q)$-bounded formula $\theta(x,y)$  
$$F_{\theta}(\eps_\frak p(a))=\eps_\frak q(b)  \ \ \mbox{iff} \ \ \models \theta(a,b)\ \ \mbox{iff} \ \ G_{\theta}(\eps_\frak q(b))=\eps_\frak p(a)$$
properly defines functions \ $F_{\theta}:\Lin_A(\frak p)\longrightarrow \Lin_A(\frak q)$ and $G_{\theta}:\Lin_A(\frak q)\longrightarrow \Lin_A(\frak p)$. \ Since $F_{\theta}\circ G_{\theta}$ and $G_{\theta}\circ F_{\theta}$ \ are identity maps they are bijections. In the next lemma we show that they do not depend on the choice of $\theta$.

\begin{lem}\label{L_pq_teta_teta'}
If   $\theta(x,y)$ and $\theta'(x,y)$ are   $(\frak p,\frak q)$-bounded over $A$ then  $F_{\theta}=F_{\theta'}$ and $G_{\theta}=G_{\theta'}$. 
\end{lem}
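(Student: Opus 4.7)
The plan is to exploit $\Aut_A(\mathfrak C)$-equivariance of the bijections $F_\theta,F_{\theta'}$. Since $\theta$ and $\theta'$ are formulas over $A$, one checks directly that for every $\sigma\in\Aut_A(\mathfrak C)$ and every $(a,b)$ with $\models\theta(a,b)$ one has $\models\theta(\sigma(a),\sigma(b))$, whence $F_\theta(\sigma\cdot\eps_{\mathfrak p}(a))=\sigma\cdot F_\theta(\eps_{\mathfrak p}(a))$, and likewise for $F_{\theta'}$. Since $p$ is complete over $A$, $\Aut_A(\mathfrak C)$ acts transitively on $p(\mathfrak C)$, and hence on $\Lin_A(\mathfrak p)$. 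Consequently, if $F_\theta$ and $F_{\theta'}$ coincide at a single class $\eps_{\mathfrak p}(a)$, equivariance propagates this to all of $\Lin_A(\mathfrak p)$, giving $F_\theta=F_{\theta'}$. The statement $G_\theta=G_{\theta'}$ then follows by inversion.

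Fix $a\models p$ and pick $b,b'\models q$ with $\models\theta(a,b)$ and $\models\theta'(a,b')$, so $F_\theta(\eps_{\mathfrak p}(a))=\eps_{\mathfrak q}(b)$ and $F_{\theta'}(\eps_{\mathfrak p}(a))=\eps_{\mathfrak q}(b')$. I would argue by contradiction: suppose $\eps_{\mathfrak q}(b)\neq\eps_{\mathfrak q}(b')$. Then one of $(b,b')$, $(b',b)$ is a Morley sequence in $\mathfrak q$ over $A$, and Lemma~\ref{LDworsas} forces proper nesting of $D_{\mathfrak p}(b)$ and $D_{\mathfrak p}(b')$, with the direction dictated by whether $\mathfrak p$ and $\mathfrak q$ commute.

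Next I establish that $\eps_{\mathfrak p}(a)\subseteq D_{\mathfrak p}(b)\cap D_{\mathfrak p}(b')$. By Lemma~\ref{Lbounded_unbounded}(iii), $\theta(\mathfrak C,b)\cap p(\mathfrak C)\subseteq\eps_{\mathfrak p}(a)$, confining all $p$-realizations of $\theta(x,b)$ to the bounded class $\eps_{\mathfrak p}(a)$. This rules out $a\in I_{\mathfrak p}(b)$: for then $\eps_{\mathfrak p}(a)\subseteq I_{\mathfrak p}(b)=\mathfrak p_{\strok Ab}(\mathfrak C)$ by the $\eps_{\mathfrak p}$-closedness of $I_{\mathfrak p}(b)$ that one reads off from Lemma~\ref{LDscl_closed} (applied to its complement), yet realizations of $\mathfrak p_{\strok Ab}$ extend to Morley sequences escaping every bounded $\eps_{\mathfrak p}$-class, contradicting the confinement. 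Hence $a\in D_{\mathfrak p}(b)$, and by Lemma~\ref{LDscl_closed} itself, $\eps_{\mathfrak p}(a)\subseteq D_{\mathfrak p}(b)$. Symmetrically $\eps_{\mathfrak p}(a)\subseteq D_{\mathfrak p}(b')$.

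The main obstacle is to convert the nesting $D_{\mathfrak p}(b)\subsetneq D_{\mathfrak p}(b')$ (or its reverse) together with $\eps_{\mathfrak p}(a)\subseteq D_{\mathfrak p}(b)\cap D_{\mathfrak p}(b')$ into a contradiction, since these inclusions are compatible in isolation ($\eps_{\mathfrak p}(a)$ simply lies in the smaller of the two sets). The expected route is to propagate the assumption $F_\theta(\eps_{\mathfrak p}(a))\neq F_{\theta'}(\eps_{\mathfrak p}(a))$ via $\Aut_A(\mathfrak C)$-equivariance to every class $\gamma\in\Lin_A(\mathfrak p)$, and then match this uniform ``shift'' against the properly nested $D_{\mathfrak p}$-structure across different Morley pairs in $\mathfrak q$; combined with the fact that $F_\theta$ and $F_{\theta'}$ must share a common orientation (both order-preserving or both order-reversing, as dictated by Lemma~\ref{LDworsas}), this should deliver the desired contradiction and complete the proof.
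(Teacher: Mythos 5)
Your equivariance reduction is correct as far as it goes: since $\theta,\theta'$ are over $A$, both $F_\theta$ and $F_{\theta'}$ commute with $\Aut_A(\mathfrak C)$, and since $\Aut_A(\mathfrak C)$ acts transitively on $p(\Mon)$, it suffices to show $F_\theta$ and $F_{\theta'}$ agree at one $\eps_{\frak p}$-class. Your Step~6 is also right: $a\in D_{\frak p}(b)$ (since $a\in I_{\frak p}(b)$ would force every realization of $\mathfrak p_{\strok Ab}$ into $\eps_{\frak p}(a)$, impossible), and $\eps_\frak p$-closedness gives $\eps_{\frak p}(a)\subseteq D_{\frak p}(b)\cap D_{\frak p}(b')$. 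But then the proof stops. You concede explicitly that the nesting $D_{\frak p}(b)\subsetneq D_{\frak p}(b')$ together with $\eps_{\frak p}(a)\subseteq D_{\frak p}(b)\cap D_{\frak p}(b')$ is perfectly consistent, and the closing paragraph (``this should deliver the desired contradiction'') is a wish, not an argument. That is a genuine gap: no contradiction has been derived, so the lemma is not proved. Moreover, trying to close the gap by relating $D_{\frak p}(b)$ to $\eps_{\frak q}(b)$ via $G$ (e.g.\ Lemma~\ref{L_F(a)_max_in_D(a)}) would be circular, since those facts presuppose that $F$ and $G$ are well-defined independently of $\theta$ --- precisely what this lemma establishes.

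The paper's proof takes a different and much shorter route, and the contrast is instructive. Rather than pulling back to the $\frak p$-side via $D_{\frak p}$, it compares $\theta$ and $\theta'$ directly on the $\frak q$-side. Assuming WLOG $\eps_{\frak q}(b)<_{\frak q}\eps_{\frak q}(b')$, the defining property of $(\frak p,\frak q)$-boundedness gives $\theta(a,\Mon)\subseteq\eps_{\frak q}(b)$ and $\theta'(a,\Mon)\subseteq\eps_{\frak q}(b')$, so one can form the relatively $A$-definable ``gap'' formula $\psi(a,y):=\theta(a,\Mon)<_{\frak q}y<_{\frak q}\theta'(a,\Mon)$. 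This $\psi(a,y)$ is squeezed between $b$ and $b'$, hence $\frak q$-bounded; but a $\frak q$-bounded formula with a parameter from $p(\Mon)$ confines its $q$-realizations to a single $\eps_{\frak q}$-class (Lemma~\ref{Lbounded_unbounded}), whereas $\psi(a,\Mon)$ covers the entire interval of classes strictly between $\eps_{\frak q}(b)$ and $\eps_{\frak q}(b')$. That is the contradiction. The key idea you are missing is that $(\frak p,\frak q)$-boundedness already gives a uniform definable ``single-class'' confinement on the $\frak q$-side for both $\theta$ and $\theta'$, so any discrepancy manifests itself as a definable, $\frak q$-bounded gap --- and such a gap cannot exist.
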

\begin{proof}Towards  contradiction assume that   $a,b,b'$ are such that $F_{\theta}(\eps_{\frak p}(a))=\eps_{\frak q}(b)\neq\eps_{\frak q}(b')=F_{\theta'}(\eps_\frak p(a))$;  without loss of generality assume $\eps_{\frak q}(b)<_\frak q\eps_{\frak q}(b')$. Since both $\theta(x,y)$ and $\theta'(x,y)$ are $(\frak p,\frak q)$-bounded we have $\theta(a,\Mon)\subseteq\eps_{\frak q}(b)$ and $\theta'(a,\Mon)\subseteq\eps_{\frak q}(b')$, so $\theta(a,\Mon)<_{\frak q}\theta'(a,\Mon)$. Let $\psi(a,y)$ be the formula $\theta(a,\Mon)<_\frak q y<_\frak q \theta'(a,\Mon)$. Then  $b<_{\frak q}\psi(a,\Mon)<_{\frak q}b'$, which implies that $\psi(a,y)$ is $\frak q$-bounded. On the other hand $\psi(a,y)$ is satisfied by all elements satisfying $b<_{\frak q}\eps_\frak q(y)<_{\frak q}b'$, which is not possible by Lemma \ref{Lbounded_unbounded}(ii).
\end{proof}

The lemma implies that $F_{\theta}$ and $G_{\theta}$ are canonical maps not depending on the particular choice of the $(\frak p,\frak q)$-bounded formula $\theta$. From now on we will simply denote them by $F$ and $G$.
 
\begin{lem}\label{L_F(a)_max_in_D(a)} Suppose that $a\models p$, $b\models q$. Then:
\begin{enumerate}[(i)]
\item \ $b'\models \frak q_{\strok Aa}$  \ if and only if  \ $F(\eps_\frak p(a))<_\frak q \eps_\frak q(b')$;
\item \ $a'\models \frak p_{\strok Ab}$  \ if and only if  \ $G(\eps_\frak q(b))<_\frak p \eps_\frak p(a')$.
\end{enumerate}
\end{lem}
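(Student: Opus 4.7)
Statement (ii) is the mirror image of (i) obtained by interchanging $\mathfrak p$ and $\mathfrak q$ and replacing $F$ with $G=F^{-1}$. By Remark \ref{rmk_both_bounded} the relation of $(\mathfrak p,\mathfrak q)$-boundedness is symmetric in its two arguments, and Assumption \ref{ass_sr} places the same hypotheses on $\mathfrak p$ and $\mathfrak q$; so a proof of (i) transfers verbatim to (ii). I therefore focus on (i) and prove each implication separately.

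\textbf{Direction $(\Rightarrow)$.} Pick $b_0$ with $\eps_\frak q(b_0)=F(\eps_\frak p(a))$; by definition of $F$ some $(\mathfrak p,\mathfrak q)$-bounded $\theta(x,y)\in \tp(a,b_0/A)$ witnesses this. I first show $b_0\in D_\mathfrak q(a)$. If instead $b_0\models \mathfrak q_{\strok Aa}$, extract a Morley sequence $(b_0,b_1,b_2,\ldots)$ in $\mathfrak q$ over $Aa$; then $\theta(a,y)\in \mathfrak q_{\strok Aa}$, so every $b_i$ satisfies $\theta(a,y)$, and $(\mathfrak p,\mathfrak q)$-boundedness forces $b_i\in \eps_\frak q(b_0)$ for every $i$, contradicting the $\cl_\mathfrak q$-freeness of the Morley sequence (Remark \ref{Rmorley_equals_clfree}). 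By $\eps_\frak q$-closedness of $D_\mathfrak q(a)$ (Lemma \ref{LDscl_closed}), $\eps_\frak q(b_0)\subseteq D_\mathfrak q(a)$. Now given $b'\models \mathfrak q_{\strok Aa}$ we have $b'\in I_\mathfrak q(a)$, so Lemma \ref{Ld<i}(iii) yields $b_0<_\frak q b'$; since $b_0$ and $b'$ live in distinct $\eps_\frak q$-classes, $F(\eps_\frak p(a))=\eps_\frak q(b_0)<_\frak q \eps_\frak q(b')$.

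\textbf{Direction $(\Leftarrow)$.} Assume $F(\eps_\frak p(a))<_\frak q \eps_\frak q(b')$ and suppose towards contradiction that $b'\in D_\mathfrak q(a)$. Regularity of $\mathfrak q$ over $A$ applied to $b'$ with $B=Aa$ then forces $\mathfrak q_{\strok Aa}\vdash \mathfrak q_{\strok Aab'}$, whence $I_\mathfrak q(a)\subseteq I_\mathfrak q(b')$, equivalently $D_\mathfrak q(b')\subseteq D_\mathfrak q(a)$. Fix $b_0\in F(\eps_\frak p(a))$; since $\eps_\frak q(b_0)<_\frak q \eps_\frak q(b')$, the pair $(b_0,b')$ is Morley in $\mathfrak q$ over $A$, and Lemma \ref{LDworsas} produces a strict nesting between $D_\mathfrak p(b_0)$ and $D_\mathfrak p(b')$, its direction dictated by whether or not $(\mathfrak p\otimes\mathfrak q)_{\strok A}=(\mathfrak q\otimes\mathfrak p)_{\strok A}$. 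The mirror of the $(\Rightarrow)$ argument, applied with the roles of $\mathfrak p$ and $\mathfrak q$ swapped, places $a\in D_\mathfrak p(b_0)$. Combining this nesting with $\mathfrak p$-regularity applied to $a$ over $Ab'$ and with the already-established $(\Rightarrow)$ half of (ii), one produces in each of the two Lemma \ref{LDworsas} cases a realization of $\mathfrak p_{\strok Ab'}$ whose $\eps_\frak p$-class is forced, via the $F$-correspondence, to sit both strictly above and strictly below a fixed class of $\Lin_A(\mathfrak p)$, the desired contradiction.

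\textbf{Main obstacle.} The whole weight of the proof lies in the $(\Leftarrow)$ direction, where both regularity hypotheses (on $\mathfrak p$ and on $\mathfrak q$) must be mobilised in tandem and the two orientations supplied by Lemma \ref{LDworsas} handled uniformly; indeed these two cases correspond precisely to the isomorphism-versus-anti-isomorphism dichotomy that will be the content of Theorem \ref{T_nor_uvod}, and verifying that the contradiction materializes in each of them is the technical core.
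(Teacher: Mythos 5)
Your $(\Rightarrow)$ direction is correct and is actually a nice alternative to the paper's argument: the paper shows $\frak q_{\strok Aa}(y)\vdash ``\theta(a,\Mon)<_\frak q y''$ (and then $``\eps_\frak q(b)<y''$) by a compactness/saturation argument, while you place $b_0$ directly into $D_\frak q(a)$ via the Morley-sequence contradiction and then quote Lemma \ref{Ld<i}(iii) and the $\eps_\frak q$-closedness of $D_\frak q(a)$. Both routes work; yours is arguably cleaner for this half.

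The $(\Leftarrow)$ direction is where there is a genuine gap. Your final paragraph asserts that ``one produces in each of the two Lemma \ref{LDworsas} cases a realization of $\frak p_{\strok Ab'}$ whose $\eps_\frak p$-class is forced \dots to sit both strictly above and strictly below a fixed class,'' but this contradiction is never actually derived, and I do not see how to derive it from the ingredients you assemble. Concretely: in the non-commutative case you get $a\in D_\frak p(b_0)\subsetneq D_\frak p(b')$, and in the commutative case $D_\frak p(b')\subsetneq D_\frak p(b_0)$ (which by itself does not even force $a\in D_\frak p(b')$). After applying $\frak p$-regularity of $a$ over $Ab'$ and the $(\Rightarrow)$ half of (ii), what you actually obtain is that every $a'\models\frak p_{\strok Ab'}$ satisfies both $\eps_\frak p(a)<_\frak p\eps_\frak p(a')$ and $G(\eps_\frak q(b'))<_\frak p\eps_\frak p(a')$. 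These are two lower bounds on the same side; they never sandwich $\eps_\frak p(a')$ from both sides, and nothing in the lemmas you cite forces a collision. Determining the direction of the $F$-correspondence (i.e.\ whether $G(\eps_\frak q(b_0))<_\frak p G(\eps_\frak q(b'))$ or vice versa) is precisely the content of Lemma \ref{L_boundedlemma_1}, which is proved \emph{after} this lemma and using it, so you cannot appeal to it here.

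The paper's $(\Leftarrow)$ proof takes a different and more direct tack that you should compare against. Starting from $b'\nmodels\frak q_{\strok Aa}$, one extracts a formula $\theta'(a,y)\in\tp(b'/Aa)$ witnessing this; $\theta'(a,\Mon)\cap q(\Mon)$ is then bounded above in $q(\Mon)$ (by Lemma \ref{Ld<i}(iii)). One forms $\theta''(a,y):=``\theta(a,\Mon)<_\frak q y''\wedge\theta'(a,y)$, which is satisfied by $b'$ and is bounded from below by $b_0\in\theta(a,\Mon)$ and from above by $\theta'$. Hence $\theta''(a,y)$ is $\frak q$-bounded; after the modification of Remark \ref{rmk_both_bounded} it is $(\frak p,\frak q)$-bounded. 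But then Lemma \ref{L_pq_teta_teta'} forces $F_{\theta''}(\eps_\frak p(a))=F_\theta(\eps_\frak p(a))$, whereas the two are $\eps_\frak q(b')$ and $\eps_\frak q(b_0)$ respectively --- a contradiction. The crucial ingredient you are missing is exactly this well-definedness result (Lemma \ref{L_pq_teta_teta'}): by manufacturing a second $(\frak p,\frak q)$-bounded formula whose $F$-value differs, the contradiction falls out immediately; trying to route the contradiction through the $D$-sets and the regularity dichotomy, as you attempt, does not close without additionally knowing the direction of $F$, which is not yet available.
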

\begin{proof}
We prove (i), the part (ii) is proved by a similar argument. Let $F(\eps_\frak p(a))=\eps_\frak q(b)$ and let $\theta(x,y)$ be a $(\frak p,\frak q)$-bounded formula.  To prove $\Rightarrow$) assume that $b'\models \frak q_{\strok Aa}$. Note that the formula `saying '$\theta(a,\Mon)<_\frak q y$' has no upper $<_\frak q$-bound, so it belongs to $\frak q_{\strok Aa}$.  Thus $ \frak q_{\strok Aa}(y)\vdash b<y$. Similarly, by replacing $\theta(a,y)$ by $\exists y'(\theta(a,y)\wedge \sigma(y',y))$  for $\sigma(y',y)\in \mathcal B_c$,   we conclude 
that $\frak q_{\strok Aa}(y)\vdash \eps_\frak q(b)<y$. Therefore  \ $F(\eps_\frak p(a))<_\frak q \eps_\frak q(b')$, completing the proof of $\Rightarrow$). 

Assume that $\Leftarrow$) fails to be true and work for a contradiction. Let $b'\models q$ be such that  $b'\nmodels \frak q_{\strok Aa}$ and  $F(\eps_\frak p(a))<_\frak q \eps_\frak q(b')$; then   $\eps_\frak q(b))<_\frak q \eps_\frak q(b')$.  
 $b'\nmodels\frak q_{\strok Aa}$ implies that there exists $\theta'(a,y)\in\tp(b/Aa)$  such that $\theta(a,\Mon)\cap q(\Mon)$ has an upper $<_\frak q$-bound. Let 
 $\theta''(a,y)$ be \ $\theta(a,\Mon)<_\frak q y\wedge \theta'(a,y)$.\ Clearly,  $b'$ satisfies $\theta''(a,y)$. \ $\theta''(a,y)$ is $<_\frak q$-bounded from above because $\theta'(a,y)$ is so; it is  $<_\frak q$-bounded from below  because any solution is bigger than $b\in\theta(a,\Mon)$. Hence $\theta''(a,y)$ is $\frak q$-bounded. By Remark \ref{rmk_both_bounded} we may slightly modify it so that it is $(\frak p,\frak q)$-bounded.   By Lemma \ref{L_pq_teta_teta'} $F_{\theta''}(\eps_\frak p(a))= F_{\theta}(\eps_\frak p(a))$; but $F_{\theta''}(\eps_\frak p(a))= \eps_\frak q(b')$ and $F_\theta(\eps_\frak p(a))=\eps_\frak q(b)$. A contradiction.
\end{proof}

\begin{lem}\label{L_boundedlemma_1}   
\begin{enumerate}[(i)]  \item   $(\mathfrak p\otimes \mathfrak q)_{\strok A}\neq (\mathfrak q\otimes \mathfrak p)_{\strok A}$ \ if and only if \  $F$ is increasing.  

\item   $(\mathfrak p\otimes \mathfrak q)_{\strok A}= (\mathfrak q\otimes \mathfrak p)_{\strok A}$ \ if and only if \  $F$ is  decreasing.
\end{enumerate}
\end{lem}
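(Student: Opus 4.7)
The plan is to translate the statement into a statement about the sets $D_\frak p(b)$ and then invoke Lemma \ref{LDworsas}. The bridge is the observation that $G(\eps_\frak q(b))$ is precisely the top $\eps_\frak p$-class inside $D_\frak p(b)$.

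First I would establish the following key description: for every $b\models q$,
\begin{center}
$D_\frak p(b)=\bigcup\{\,\eps_\frak p(a')\mid \eps_\frak p(a')\lq G(\eps_\frak q(b))\,\}$, \ and \ $G(\eps_\frak q(b))$ is the $\lq$-maximum $\eps_\frak p$-class contained in $D_\frak p(b)$.
\end{center}
This is just a reformulation of Lemma \ref{L_F(a)_max_in_D(a)}(ii): an element $a'\models p$ satisfies $a'\models\frak p_{\strok Ab}$ (i.e.\ $a'\in I_\frak p(b)$) iff $G(\eps_\frak q(b))<_\frak p\eps_\frak p(a')$, and complementing inside $p(\Mon)$ gives the desired description. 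In particular, for $F(\eps_\frak p(a))=\eps_\frak q(b)$ we have $a\in D_\frak p(b)$, and $\eps_\frak p(a)$ is maximal in $D_\frak p(b)$.

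Now take any Morley sequence $(b,b')\models(\frak q^2)_{\strok A}$, so $\eps_\frak q(b)<_\frak q\eps_\frak q(b')$, and compare $G(\eps_\frak q(b))$ with $G(\eps_\frak q(b'))$ using Lemma \ref{LDworsas}. In case $(\frak p\otimes\frak q)_{\strok A}\neq(\frak q\otimes\frak p)_{\strok A}$, Lemma \ref{LDworsas}(ii) gives $D_\frak p(b)\subsetneq D_\frak p(b')$; pick any $a^\ast\in D_\frak p(b')\smallsetminus D_\frak p(b)$, so $\eps_\frak p(a^\ast)\lq G(\eps_\frak q(b'))$ while $G(\eps_\frak q(b))<_\frak p\eps_\frak p(a^\ast)$, and chaining yields $G(\eps_\frak q(b))<_\frak p G(\eps_\frak q(b'))$. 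Thus $G$, and therefore $F=G^{-1}$, is increasing. In case $(\frak p\otimes\frak q)_{\strok A}=(\frak q\otimes\frak p)_{\strok A}$, Lemma \ref{LDworsas}(i) instead gives $D_\frak p(b')\subsetneq D_\frak p(b)$ and the symmetric argument forces $G(\eps_\frak q(b'))<_\frak p G(\eps_\frak q(b))$, so $F$ is decreasing.

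This proves the forward direction of both (i) and (ii). The converse directions are automatic: exactly one of $(\frak p\otimes\frak q)_{\strok A}=(\frak q\otimes\frak p)_{\strok A}$ and its negation holds, and (assuming $\Lin_A(\frak p)$ has at least two elements, which is forced as soon as $\nwor$ is non-trivial) $F$ cannot be simultaneously increasing and decreasing, so the two implications already proved contrapose to give the remaining ones. The main obstacle is the first bullet, i.e.\ recognising that $G(\eps_\frak q(b))$ really is the top $\eps_\frak p$-class of $D_\frak p(b)$; once that is in hand, Lemma \ref{LDworsas} does all the work.
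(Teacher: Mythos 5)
Your proof is correct, and it genuinely takes a different route from the paper's. The paper works directly on the $\frak p$ side with $F$: it fixes a Morley pair $(a,a')$ in $\frak p$, lets $\eps_\frak q(b)=F(\eps_\frak p(a))$ and $\eps_\frak q(b')=F(\eps_\frak p(a'))$, and case-splits on which of $\eps_\frak q(b)<_\frak q\eps_\frak q(b')$ or the reverse holds. In the first case it applies Lemma \ref{L_F(a)_max_in_D(a)}(i) to conclude $(a,b')\models(\frak p\otimes\frak q)_{\strok A}$ and Lemma \ref{L_F(a)_max_in_D(a)}(ii) to conclude $(b',a)\nmodels(\frak q\otimes\frak p)_{\strok A}$, hence the products differ; the second case is symmetric. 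You instead work on the $\frak q$ side with $G$: you first extract from Lemma \ref{L_F(a)_max_in_D(a)}(ii) the observation that $G(\eps_\frak q(b))$ is the top $\eps_\frak p$-class of $D_\frak p(b)$ (a correct reformulation --- $a'\in D_\frak p(b)$ iff $\eps_\frak p(a')\lq G(\eps_\frak q(b))$, by complementation and linearity), and then let Lemma \ref{LDworsas} do the work of relating (non-)commutativity to the strict inclusion of $D_\frak p(b)$ versus $D_\frak p(b')$ along a Morley pair $(b,b')$ in $\frak q$. From there a chosen $a^\ast$ in the set-difference forces strict monotonicity of $G$ and hence of $F=G^{-1}$. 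Both arguments close the same way, by noting that commutativity and non-commutativity are exhaustive and mutually exclusive, and $F$ (with $|\Lin_A(\frak p)|\geq 2$, which always holds) cannot be both increasing and decreasing. The trade-off: the paper gets away with a single case analysis on $F$ and no extra bridge observation; your version pays the small price of the bridge step but is arguably cleaner conceptually, since Lemma \ref{LDworsas} already packages the asymmetry between the commutative and non-commutative cases, and the monotonicity of $G$ drops out by a one-line chain of inequalities.
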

\begin{proof} Suppose that $(a,a')\models \frak p^2_{\strok A}$ and let $b,b'$ be such that $F(\eps_\frak p(a))= \eps_\frak q(b)$ and $F(\eps_\frak p(a'))= \eps_\frak q(b')$. Clearly, $\eps_\frak q(b)\neq\eps_\frak q(b')$ so we have two possible cases:

\smallskip {\bf Case 1.} \ \ $\eps_\frak q(b)<_\frak q\eps_\frak q(b')$ \ \ (i.e. $F(\eps_\frak p(a))<_\frak q F(\eps_\frak p(a'))$).

By Lemma \ref{L_F(a)_max_in_D(a)}(i) we have  $(a,b')\models (\mathfrak p\otimes \mathfrak q)_{\strok A}$, and by Lemma \ref{L_F(a)_max_in_D(a)}(ii) $(b',a)\nmodels (\mathfrak q\otimes \mathfrak p)_{\strok A}$. Therefore $(\mathfrak p\otimes \mathfrak q)_{\strok A}\neq(\mathfrak q\otimes \mathfrak p)_{\strok A}$.

\smallskip {\bf Case 2.} \ \ $\eps_\frak q(b')<_\frak q\eps_\frak q(b)$ \ \ (i.e. $F(\eps_\frak p(a'))<_\frak q F(\eps_\frak p(a))$).

Similarly, by Lemma \ref{L_F(a)_max_in_D(a)}(i) we have  $(a',b)\models (\mathfrak p\otimes \mathfrak q)_{\strok A}$, and by Lemma \ref{L_F(a)_max_in_D(a)}(ii) $(b,a')\models (\mathfrak q\otimes \mathfrak p)_{\strok A}$. Therefore $(\mathfrak p\otimes \mathfrak q)_{\strok A}=(\mathfrak q\otimes \mathfrak p)_{\strok A}$. 

\smallskip 
Now the proof of the lemma follows easily.
\end{proof}

\begin{cor}
\begin{enumerate}[(i)]
\item If $(\mathfrak p\otimes \mathfrak q)_{\strok A}\neq(\mathfrak q\otimes \mathfrak p)_{\strok A}$ then $F$ is an isomorphism of $(\Lin_A(\frak p),<_\frak p)$ and $(\Lin_A(\frak q),<_\frak q)$.

\item If $(\mathfrak p\otimes \mathfrak q)_{\strok A}=(\mathfrak q\otimes \mathfrak p)_{\strok A}$ then $F$ is an isomorphism of $(\Lin_A(\frak p),<_\frak p)$ and $(\Lin_A(\frak q),>_\frak q)$.
\end{enumerate}
\end{cor}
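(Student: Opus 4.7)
The proof is essentially a packaging exercise: everything needed has already been set up in the preceding lemmas, so the plan is to assemble them.

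First I would recall that after Lemma \ref{L_pq_teta_teta'} it was established that the maps $F:\Lin_A(\frak p)\longrightarrow \Lin_A(\frak q)$ and $G:\Lin_A(\frak q)\longrightarrow \Lin_A(\frak p)$ are well-defined and mutually inverse (as $F\circ G$ and $G\circ F$ are identities), and they do not depend on the choice of a $(\frak p,\frak q)$-bounded witness $\theta(x,y)$. So $F$ is already a bijection of the underlying sets of $\Lin_A(\frak p)$ and $\Lin_A(\frak q)$; only the order-theoretic statement remains to be verified in each case.

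Next, I would appeal directly to Lemma \ref{L_boundedlemma_1}. For part (i), the assumption $(\frak p\otimes\frak q)_{\strok A}\neq (\frak q\otimes\frak p)_{\strok A}$ gives, by Lemma \ref{L_boundedlemma_1}(i), that $F$ is strictly increasing: for every Morley sequence $(a,a')$ in $\frak p$ over $A$ we have $F(\eps_\frak p(a))<_\frak q F(\eps_\frak p(a'))$, i.e. $\eps_\frak p(a)<_\frak p \eps_\frak p(a')$ implies $F(\eps_\frak p(a))<_\frak q F(\eps_\frak p(a'))$. A strictly increasing bijection between two linear orders is automatically an isomorphism, so $F$ witnesses $(\Lin_A(\frak p),<_\frak p)\cong (\Lin_A(\frak q),<_\frak q)$. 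For part (ii), Lemma \ref{L_boundedlemma_1}(ii) gives that $F$ is strictly decreasing under $(\frak p\otimes\frak q)_{\strok A}=(\frak q\otimes\frak p)_{\strok A}$; equivalently $F$ is a strictly increasing bijection from $(\Lin_A(\frak p),<_\frak p)$ to $(\Lin_A(\frak q),>_\frak q)$, hence an isomorphism of these two orders and an anti-isomorphism between $(\Lin_A(\frak p),<_\frak p)$ and $(\Lin_A(\frak q),<_\frak q)$.

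There is no real obstacle; the entire content of the corollary is the statement that a bijection which is monotone between linear orders is an order isomorphism, together with the dichotomy already proved in Lemma \ref{L_boundedlemma_1}. The only minor subtlety worth stating explicitly in the write-up is that in each case exactly one of the alternatives of Lemma \ref{L_boundedlemma_1} occurs (since for any $(a,a')\models \frak p^2_{\strok A}$ the images $F(\eps_\frak p(a))$ and $F(\eps_\frak p(a'))$ are distinct and hence comparable in the linear order $<_\frak q$), so the hypotheses in (i) and (ii) are exhaustive and mutually exclusive, matching the increasing/decreasing alternative for $F$.
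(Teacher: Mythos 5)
Your proposal is correct and is precisely the argument the paper leaves implicit: the corollary is stated without a proof, being an immediate consequence of the already-established bijectivity of $F$ (from $F\circ G$ and $G\circ F$ being identities) combined with the monotonicity dichotomy of Lemma \ref{L_boundedlemma_1}. Your remark on exclusivity/exhaustivity of the two cases is a sensible addition but follows already from the fact that $<_\frak q$ is a linear order and $F$ is injective.
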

 
\smallskip 
Recall from Section 2 that we denoted $(\Lin_A(\frak p),<_\frak p)$ by $\mathbb L_A(\frak p)$. We have just proved that $F$ is an isomorphism    of   $\mathbb L_A(\frak p)$ and   either $\mathbb L_A(\frak q)$  in the non-commutative case, or $\mathbb L_A^*(\frak q)$ in the commutative case. This isomorphism is canonical in the sense that it induces a bijection between $\Lin_{A,\frak p}(M)= \{\eps_\frak p^A(a)\cap M\mid a\in \frak p_{\strok A}(M)\}$ and $\Lin_{A,\frak q}(M)$ for any small model $M\supseteq A$. So for any $M\supseteq A$ by $F_M$ we denote the restriction of $F$ to $\Lin_{A,\frak p}(M)$. We will now prove Theorem \ref{T_nor_uvod}(i).   
 
\begin{prop}\label{P1} Suppose  $M\supseteq A$. 

\begin{enumerate}[(i)]  \item If $(\mathfrak p\otimes \mathfrak q)_{\strok A}\neq(\mathfrak q\otimes \mathfrak p)_{\strok A}$  \ then \ $F_M$ is an isomorphism of $(\Lin_{A,\frak p}(M),<_\frak p)$ and    $(\Lin_{A,\frak q}(M),<_\frak q)$. In  particular,  $\Inv_{\mathfrak p,A}(M)=\Inv_{\mathfrak q,A}(M)$. 

\item   If $(\mathfrak p\otimes \mathfrak q)_{\strok A}=(\mathfrak q\otimes \mathfrak p)_{\strok A}$ \ then \ $F_M$ is an isomorphism of $(\Lin_{A,\frak p}(M),<_\frak p)$ and $(\Lin_{A,\frak q}(M),>_\frak q)$. In particular,  $\Inv_{\mathfrak p,A}(M)$ is isomorphic to the reversely ordered $\Inv_{\mathfrak q,A}(M)$.
\end{enumerate}
\end{prop}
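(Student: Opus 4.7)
The plan is to show that the canonical bijection $F:\Lin_A(\frak p)\to\Lin_A(\frak q)$ produced in the preceding corollary restricts to a bijection $F_M$ between $\Lin_{A,\frak p}(M)$ and $\Lin_{A,\frak q}(M)$, and then to transport the isomorphism/anti-isomorphism property from $F$ to $F_M$ and finally to $\Inv_{\frak p,A}(M)$ and $\Inv_{\frak q,A}(M)$.

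The key step is to check that $F$ sends classes meeting $M$ to classes meeting $M$. Fix any $(\frak p,\frak q)$-bounded formula $\theta(x,y)$ over $A$ (these exist by Remark \ref{rmk_both_bounded} applied to the witness $\theta(x,y)\in\tp(a,b/A)$ given by the bounded $\nwor$ assumption). Take $a\in \frak p_{\strok A}(M)$. Since $A\subseteq M$ and $a\in M$, the formula $\theta(a,y)$ is over $M$, and it is consistent (it has realizations in $\Mon$). Because $M$ is a model of $T$, there is some $b\in M$ with $\models\theta(a,b)$. By definition of $(\frak p,\frak q)$-bounded we have $\theta(a,\Mon)\subseteq \eps_\frak q(b')$ for any $b'\models q$ satisfying $\theta(a,y)$; in particular $b\in q(\Mon)$, so $b\in \frak q_{\strok A}(M)$, and $F(\eps_\frak p(a))=\eps_\frak q(b)$. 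This witnesses that $F(\eps_\frak p(a))\in\Lin_{A,\frak q}(M)$, so $F_M$ is well defined as a map into $\Lin_{A,\frak q}(M)$.

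Surjectivity of $F_M$ follows by the symmetric argument applied to $G=F^{-1}$: starting with $b\in \frak q_{\strok A}(M)$, the formula $\theta(x,b)$ is over $M$ and consistent, hence realized by some $a\in M$, which by $(\frak p,\frak q)$-boundedness lies in $\frak p_{\strok A}(M)$ and satisfies $G(\eps_\frak q(b))=\eps_\frak p(a)$, i.e.\ $F_M(\eps_\frak p(a))=\eps_\frak q(b)$. Injectivity is inherited from $F$ being a bijection (Lemma \ref{L_pq_teta_teta'} and the remarks after it). The order behaviour of $F_M$ is inherited from that of $F$: in the non-commutative case Lemma \ref{L_boundedlemma_1}(i) gives that $F$ is $<_\frak p$-$<_\frak q$ increasing on all of $\Lin_A(\frak p)$, hence $F_M$ is an isomorphism onto $(\Lin_{A,\frak q}(M),<_\frak q)$; in the commutative case Lemma \ref{L_boundedlemma_1}(ii) gives the reverse, so $F_M$ is an isomorphism onto $(\Lin_{A,\frak q}(M),>_\frak q)$.

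Finally, to pass from $\Lin_{A,\frak p}(M)$ to $\Inv_{\frak p,A}(M)$, one uses the description of $\Inv$ from Theorem \ref{T_regular_implies_totdeg}(iii) together with Proposition \ref{P_totdeg_cl_iplies_invariant}(iii): a maximal Morley sequence in $\frak p$ over $A$ in $M$ selects exactly one representative from each $\eps_\frak p$-class meeting $M$ and is $<_\frak p$-increasing, so its order type is canonically $(\Lin_{A,\frak p}(M),<_\frak p)$; analogously for $\frak q$. Combining with the previous paragraph yields $\Inv_{\frak p,A}(M)\cong \Inv_{\frak q,A}(M)$ in the non-commutative case and $\Inv_{\frak p,A}(M)\cong \Inv_{\frak q,A}(M)^*$ in the commutative case. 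The only mild subtlety to watch is confirming that the element $b\in M$ produced when applying the definition of a model to $\theta(a,y)$ automatically lies in $q(\Mon)$; this is immediate from $(\frak p,\frak q)$-boundedness of $\theta$, so there is no serious obstacle, and the proposition reduces almost entirely to bookkeeping on the already established canonical bijection $F$.
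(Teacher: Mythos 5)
Your proof is correct and follows the same route as the paper's: it reduces the proposition to showing that the canonical bijection $F$ restricts to a bijection on the classes meeting $M$ by realizing a $(\frak p,\frak q)$-bounded formula inside $M$, and then imports the monotonicity from Lemma~\ref{L_boundedlemma_1}. One small economy in your version is worth pointing out: the paper invokes Lemma~\ref{Lbounded_unbounded}(iv) to arrange $\theta(a,y)\vdash q(y)$, whereas you observe (correctly) that $\theta(a,\Mon)\subseteq\eps_\frak q(b)\subseteq q(\Mon)$ is already built into the definition of a $(\frak p,\frak q)$-bounded formula, so the extra strengthening is unnecessary; you also spell out the surjectivity argument via $G$, which the paper leaves implicit by symmetry.
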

\begin{proof}
It suffices to show that $F_M$ maps $\Lin_{A,\frak p}(M)$ onto $\Lin_{A,\frak q}(M)$; then it follows that it is an isomorphism of the corresponding orders. So let $\theta(x,y)$ be a $(\frak p,\frak q)$-bounded formula and let $a\in p(M)$. By Lemma \ref{Lbounded_unbounded}(iv) we may assume that $\theta(a,y)\vdash q(y)$ so  the consistency of $\theta(a,y)$ implies that there is $b\in M$ satisfying $\theta(a,y)$. Therefore $F_M$ maps $\Lin_{A,\frak p}(M)$ onto $\Lin_{A,\frak q}(M)$.
\end{proof}

\subsection{Unbounded $\nwor$}\label{X_unbdd_nwor}

\begin{exm}\label{E_unbdd_nwor} \ Examples of unbounded $\nwor$.

(1) \ Consider the structure $(\mathbb R, P(\mathbb R),Q(\mathbb R),<_u,<_v, S)$ where $P,Q$ are unary, $Q(\mathbb R)=\mathbb Q$ and $P(\mathbb R)$ is the set of all irrationals; $<_\frak p$ and $<_\frak q$ are the corresponding restrictions of the natural ordering to $P(\mathbb R)$ and $Q(\mathbb R)$ respectively; $S(a,b)$ holds iff $a$ is rational, $b$ is irrational and $b<a$ (in the natural ordering). Let $\frak p$  and $\frak q$ respectively  be the global types of an infinitely large element satisfying $P(x)$  and $Q(x)$ respectively. It is straightforward to check that $\frak p, \frak q$ are regular,   $\emptyset$-asymmetric, and  that $\frak p_{\strok \emptyset}\nwor\frak q_{\strok \emptyset}$.  Then $\Inv_\frak p$ of our structure is the order type of irrationals, while its $\Inv_\frak q$ is the order type of rationales. However the  Dedekind completions of these invariants are isomorphic, and the isomorphism is induced by $S$. Note that $\frak p\otimes\frak q\neq\frak q\otimes\frak p$. 

\smallskip 
(2) \ Consider the structure $(M, P(M),Q(M),<_\frak p,<_\frak q, S^*)$ where $(P(M),<_\frak p)$ is the lexicographically ordered $\omega_1\times \mathbb Q$ and $(Q(M),<_\frak q)$ is the lexicographically ordered $\omega_1^*\times \mathbb (\mathbb R\smallsetminus\mathbb Q)$. $M$ is the disjoint union of $P(M)$ and $Q(M)$, and $S((\alpha,r),(\beta^*,s))$ holds if and only if $\alpha\leq \beta$ and $r< s$. Let $\frak p$ be the type of an infinitely $<_\frak p$-large element of $P(M)$ and let $\frak q$ be the type of an infinitely $<_\frak q$-large element of $Q(M)$. Morley sequences in $\frak p$ ($\frak q$) over $\emptyset$ are $<_\frak p$-increasing ($<_\frak q$-increasing). $\Inv_\frak p(M)$ and $\Inv_\frak q(M)$ are isomorphic to the lexicographically ordered $\omega_1\times \mathbb Q$ and  $\omega_1^*\times \mathbb (\mathbb R\smallsetminus\mathbb Q)$ respectively. Their Dedekind completions are not isomorphic but they are anti-isomorphic, and the anti-isomorphism is induced by $S^*$. Note that $\frak p\otimes\frak q= \frak q\otimes\frak p$
\end{exm}
 
If we assume that the types are strongly regular, we will show that the situation from preceding examples holds: Dedekind completion of $\Inv_A(\frak p)$ is either isomorphic or anti-isomorphic to Dedekind completion of $\Inv_A(\frak q)$, and the isomorphism (anti-isomorphism) is induced by $\theta$.
 
\begin{assu} \ In addition to Assumption \ref{ass_sr} we will assume  that $(\frak p(x),\phi_\frak p(x))$  and $(\frak q(y),\phi_\frak q(y))$  are strongly regular.
\end{assu}

Assume that we are not in the bounded case. Then for all $a\models p$, $b\models q$ and all $\theta(x,y)\in\mathrm{tp}(a,b/A)$, neither $\theta(x,b)$ is $\frak p$-bounded nor $\theta(a,y)$ is $\frak q$-bounded formula. If in addition $\theta(a,y)\notin\mathfrak q_{\strok Aa}$ then $\theta(a,\Mon)\cap q(\Mon)$ is bounded from above and unbounded from below in $q(\Mon)$, and the similar holds if $\theta(x,b)\notin\frak p_{\strok Ab}$. We say that a formula $\theta(a,y)$ is bounded from above (below) within $q(\Mon)$ if $\theta (a,\Mon)$ meets $q(\Mon)$, and there exists $b\in q(\Mon)$ such that \ $\theta(a,\Mon)\cap q(\Mon)<b$\ \  (\ $b<\theta(a,\Mon)\cap q(\Mon)$\ ).

\begin{lem}\label{Lunbdd_theta*}
Suppose that $a\models p$, $b\models q$ and  $\theta(x,y)\in\tp(a,b/A)$ are such that $\theta(a,y)\notin\mathfrak q_{\strok Aa}$. Then there is a formula $\theta^d(x,y)\in\tp(a,b/A)$  such that: \  $\theta^d(a,\Mon)$ is    downwards closed (with respect to  $\leq_\frak q$) subset of $\phi_\frak q(\Mon)$ and $\theta^d(a,\Mon)\cap q(\Mon)$ is the downwards closure of  $\theta(a,\Mon)\cap q(\Mon)$ in $q(\Mon)$.   
\end{lem}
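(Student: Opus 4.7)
\emph{Natural candidate.} The obvious attempt is
\[
\theta^d(x,y)\;:=\;\phi_\frak q(y)\,\wedge\,\exists y'\bigl(\theta(x,y')\wedge\phi_\frak q(y')\wedge y\leq_\frak q y'\bigr),
\]
a formula over $A$. I would first verify the routine properties: $\theta^d(a,b)$ holds by taking $y':=b$ (since $b\models q\vdash\phi_\frak q$); $\theta^d(a,\Mon)\subseteq\phi_\frak q(\Mon)$; and $\theta^d(a,\Mon)$ is $\leq_\frak q$-downward closed in $\phi_\frak q(\Mon)$ by transitivity of $\leq_\frak q$. Moreover, $\theta^d(a,\Mon)\cap q(\Mon)$ immediately contains the downward closure of $\theta(a,\Mon)\cap q(\Mon)$ in $q(\Mon)$, since any $s\in\theta(a,\Mon)\cap q(\Mon)$ satisfies $\phi_\frak q(s)$ and serves as the existential witness.

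\emph{Reverse inclusion.} The substance of the lemma is the opposite inclusion. Take $y\in\theta^d(a,\Mon)\cap q(\Mon)$ with a witness $y'\in\phi_\frak q(\Mon)\cap\theta(a,\Mon)$ satisfying $y\leq_\frak q y'$; the plan is to show that necessarily $y'\in q(\Mon)$, so that $s:=y'$ places $y$ in the downward closure. Completeness of $\frak q_{\strok Aa}$ together with $\theta(a,y)\notin\frak q_{\strok Aa}$ yields $\neg\theta(a,y)\in\frak q_{\strok Aa}$; hence $y'\notin\frak q_{\strok Aa}(\Mon)$, and strong regularity of $(\frak q,\phi_\frak q)$ at $Aa$ produces $\frak q_{\strok Aa}\vdash\frak q_{\strok Aay'}$. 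Assuming for contradiction $y'\notin q(\Mon)$, strong regularity at $A$ likewise yields $q\vdash\frak q_{\strok Ay'}$. The $A$-invariance of $\frak q$ then forces the $<_\frak q$-relation between $y'$ and every $c\models q$ to depend only on $\tp(y'/A)$, and the convexity of $q(\Mon)$ in $\phi_\frak q(\Mon)$ (Lemma~\ref{L_SR_is_convex}) confines $y'$ to exactly one of three configurations with respect to $q(\Mon)$: uniformly $<_\frak q$-below, uniformly $<_\frak q$-above, or uniformly $<_\frak q$-incomparable to all of $q$.

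\emph{Main obstacle.} The configurations ``below'' and ``incomparable'' contradict $y\leq_\frak q y'$ paired with $y\models q$ and are dismissed at once. The principal obstacle is the ``above'' configuration, where $y'>_\frak q c$ for every $c\models q$. To rule it out I would take $b^*\models\frak q_{\strok Aa}$, note that $b^*\in q(\Mon)\setminus\theta(a,\Mon)$ and $b^*\models\frak q_{\strok Aay'}$, and combine the asymmetric regularity of $\frak q$ with the unbounded-below condition on $\theta(a,\Mon)\cap q(\Mon)$ in $q(\Mon)$ (another consequence of $\theta(a,y)\notin\frak q_{\strok Aa}$) to exhibit a Morley-type realization of $\frak q$ over $Aay'$ whose $<_\frak q$-placement is simultaneously forced on either side of $y'$, contradicting the ``above'' hypothesis. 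Once this configuration is eliminated, $y'\in q(\Mon)$, the reverse inclusion follows, and the lemma is proved.
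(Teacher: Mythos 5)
There is a genuine gap, and it is exactly the one you flag as the ``principal obstacle'': the ``above'' configuration cannot be ruled out by your argument, and in fact your candidate $\theta^d$ is simply wrong. Requiring the witness $y'$ only to satisfy $\theta(x,y')\wedge\phi_\frak q(y')$ does not keep it near $q(\Mon)$: nothing prevents $\theta(a,\Mon)\cap\phi_\frak q(\Mon)$ from containing an element $y'$ with $q(\Mon)<_\frak q y'$ (this happens, e.g., whenever $q$ is the type of an interior ``cut'' in a dense order and $\theta(a,y)$ is satisfied by points on the far side of the cut). For such a $y'$, \emph{every} $y\models q$ satisfies your $\theta^d(a,y)$, so $\theta^d(a,\Mon)\cap q(\Mon)=q(\Mon)$, strictly larger than the downward closure of $\theta(a,\Mon)\cap q(\Mon)$ (which is bounded above, since $\theta(a,y)\notin\frak q_{\strok Aa}$). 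Your attempt to derive a contradiction by combining a realization $b^*$ of $\frak q_{\strok Aay'}$ with asymmetric regularity does not produce any tension: when $y'$ sits above all of $q(\Mon)$, every realization of $\frak q_{\strok Aay'}$ is simply $<_\frak q y'$, which is consistent with everything.

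The fix requires changing the formula, not the argument about it. The paper takes $b_1\models q$ bounding $\theta(a,\Mon)\cap q(\Mon)$ from above, uses compactness on $\{\theta(a,y)\}\cup q(y)\vdash y<_\frak q b_1$ to obtain $\varphi(y)\in q(y)$ with $\models(\theta(a,y)\wedge\varphi(y))\Rightarrow y<_\frak q b_1$, and sets
\[
\theta^d(a,y):=\phi_\frak q(y)\wedge(\exists z)\bigl(\varphi(z)\wedge\theta(a,z)\wedge y\leq_\frak q z\bigr).
\]
The extra conjunct $\varphi(z)$ is the whole point: it forces any witness $z$ to satisfy $z<_\frak q b_1$, and since $z$ also bounds some $y\models q$ from above, $z$ is squeezed strictly between two realizations of $q$. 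Convexity (Lemma \ref{L_SR_is_convex}) then gives $z\models q$, which is what your argument was trying to conclude by regularity alone and could not. Without some device of this kind that ties the witness down between two $q$-points, the lemma fails for the naive $\theta^d$.
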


\setcounter{equation}{0}
\begin{proof}Let $b_1\models q$ be an upper bound for $\theta(a,\Mon)\cap q(\Mon)$. Then \ $\{\theta(a,y)\}\cup q(y)\vdash y<_\frak q b_1$  \ and, by compactness, there is $\varphi(y)\in q(y)$ such that $\models (\theta(a,y)\wedge\varphi(y))\Rightarrow y<_\frak q b_1$.  Define:
$$\theta^d(a,y):=\phi_\frak q(y)\wedge(\exists z)(\varphi(z)\wedge \theta(a,z)\wedge y\leq_\frak q z) \ . $$
Clearly, $\theta^d(a,\Mon)$ is   a downwards closed   subset of $\phi_\frak q(\Mon)$, so it remains to show that $\theta^d(a,\Mon)\cap q(\Mon)$ is the downwards closure of  $\theta(a,\Mon)\cap q(\Mon)$. For, suppose that $b'\in\theta^d(a,\Mon)\cap q(\Mon)$ and we will prove that $b'$ is $\leq_\frak q$-smaller than some element of $\theta(a,\Mon)\cap q(\Mon)$. Let $c$ witness the existential quantifier in $\models\theta^d(a,b')$. Then    $c\in \varphi(\Mon)\cap\theta(a,\Mon)$ implies $c<_\frak q b_1$ and,   since  $b'\leq_\frak q c$ holds, we have $b'\leq_\frak q c<_\frak q b_1$.   $q(\Mon)$ is $\leq_\frak q$-convex, so $c\models q$. Thus $b'\leq_\frak q c$ and $c\in\theta(a,\Mon)\cap q(\Mon)$, as desired. The proof of the lemma is complete.
\end{proof}

\begin{lem}\label{L_bdd_rel_def}
Suppose that $a\models p$, $b\models q$ and  $\theta(x,y)\in\tp(a,b/A)$ are such that $\theta(a,y)\notin\mathfrak q_{\strok Aa}$. Then:  
\begin{enumerate}[(i)]
\item \ \ $\theta^d(a,\Mon)\cap q(\Mon)=D_{\mathfrak q}(a)$. 

\item \ \ $D_{\mathfrak q}(a)$ is relatively definable over $Aa$ within $q(\Mon)$ and $D_{\mathfrak p}(b)$ is relatively definable over $Ab$ within $p(\Mon)$. 
\end{enumerate}   
\end{lem}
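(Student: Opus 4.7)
The plan is to establish (i) first, from which (ii) follows almost immediately. Indeed, since $\theta^d(a,y)$ is a single formula over $Aa$, the equality $D_\frak q(a)=\theta^d(a,\Mon)\cap q(\Mon)$ exhibits $D_\frak q(a)$ as relatively definable within $q(\Mon)$ over $Aa$. For the relative definability of $D_\frak p(b)$, we observe that $p\nwor q$ gives $D_\frak p(b')\neq\emptyset$ for any $b'\models q$ (Lemma \ref{Ld<i}(i)); picking $a'\in D_\frak p(b')$ and any $\theta'(x,y)\in\tp(a',b'/A)$ we have $\theta'(x,b')\notin\frak p_{\strok Ab'}$, so the symmetric analogue of (i)—with the roles of $\frak p,\frak q$ and $a,b$ interchanged—applies and produces a defining formula for $D_\frak p(b')$ within $p(\Mon)$ over $Ab'$; an $A$-automorphism sending $b'$ to $b$ then transports this to a relative definition of $D_\frak p(b)$ over $Ab$.

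For the inclusion $\theta^d(a,\Mon)\cap q(\Mon)\subseteq D_\frak q(a)$, take $b'$ on the left; the construction in Lemma \ref{Lunbdd_theta*} produces $c\in q(\Mon)\cap\theta(a,\Mon)\cap\varphi(\Mon)$ with $b'\leq_\frak q c$. Completeness of $\frak q_{\strok Aa}$ plus the hypothesis $\theta(a,y)\notin\frak q_{\strok Aa}$ give $\neg\theta(a,y)\in\frak q_{\strok Aa}$, so $c\models q\wedge\theta(a,y)$ forces $c\nmodels\frak q_{\strok Aa}$, i.e.\ $c\in D_\frak q(a)$; downwards-closedness of $D_\frak q(a)$ in $q(\Mon)$ (Lemma \ref{Ld<i}(ii)) then gives $b'\in D_\frak q(a)$.

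The reverse inclusion is the main work. For $b''\in D_\frak q(a)$, the case $b''\leq_\frak q b$ is witnessed by $c=b$, so assume $b''>_\frak q b$. It suffices to produce $c\in q(\Mon)\cap\theta(a,\Mon)$ with $c\geq_\frak q b''$. By compactness this reduces to showing consistency of the partial type $q(z)\cup\{\theta(a,z),\,z\geq_\frak q b''\}$. Assuming otherwise, compactness yields $\varphi_0(y)\in q(y)$ with $\varphi_0(y)\wedge\theta(a,y)\vdash y<_\frak q b''$, and then $\theta_0(x,y):=\theta(x,y)\wedge\varphi_0(y)\in\tp(a,b/A)$ satisfies $\theta_0(a,y)\notin\frak q_{\strok Aa}$ and $\theta_0(a,\Mon)\cap q(\Mon)<_\frak q b''$.

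The main obstacle is deriving a contradiction, and strong regularity is essential here. Applying strong regularity of $(\frak q,\phi_\frak q)$ at $B=Aa$ to $b$ and to $b''$ (both of which lie in $\phi_\frak q(\Mon)\cap D_\frak q(a)$) gives $\frak q_{\strok Aa}\vdash\frak q_{\strok Aab}$ and $\frak q_{\strok Aa}\vdash\frak q_{\strok Aab''}$. Combining these entailments with the $A$-invariance of $\frak q$, the plan is to show that the orbit of $b$ under $\Aut_{Aa}(\Mon)$—equivalently, the realization set of $\tp(b/Aa)$—is cofinal in $D_\frak q(a)$ with respect to $<_\frak q$, producing an $Aa$-conjugate $c$ of $b$ with $c\geq_\frak q b''$. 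Since $\theta(a,y)\in\tp(b/Aa)=\tp(c/Aa)$, we then have $\models\theta(a,c)\wedge q(c)$, placing $c$ in $\theta(a,\Mon)\cap q(\Mon)$ above $b''$ and contradicting $\theta_0(a,\Mon)\cap q(\Mon)<_\frak q b''$. Establishing the cofinality of the $Aa$-orbit of $b$ inside $D_\frak q(a)$ is the heart of the proof and is precisely where strong regularity—not mere regularity—is invoked, since strong regularity pins down the type over $Aa$ of elements of $D_\frak q(a)\cap\phi_\frak q(\Mon)$ tightly enough to let $\tp(b/Aa)$ reach arbitrarily high in $D_\frak q(a)$.
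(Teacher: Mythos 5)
Your argument for the easy inclusion $\theta^d(a,\Mon)\cap q(\Mon)\subseteq D_{\mathfrak q}(a)$ is fine (and essentially the same as the paper's, just phrased via the downward-closure construction), and the deduction of (ii) from (i) is also correct, including the automorphism transport for $D_\frak p(b)$.

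The reverse inclusion, however, has a genuine gap, and you signal it yourself: the cofinality of the $\Aut_{Aa}(\Mon)$-orbit of $b$ inside $D_\frak q(a)$ is announced as ``the plan'' but never established. It does \emph{not} follow from strong regularity. Strong regularity at $B=Aa$ gives $\frak q_{\strok Aa}\vdash\frak q_{\strok Aab}$ and $\frak q_{\strok Aa}\vdash\frak q_{\strok Aab''}$ for any $b,b''\in\phi_\frak q(\Mon)\cap D_\frak q(a)$, but this is an assertion about the restrictions of $\frak q$, not about $\tp(b/Aa)$ versus $\tp(b''/Aa)$; it does not let you conclude that a single complete $Aa$-type is cofinal in $D_\frak q(a)$. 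In fact this cofinality is essentially equivalent to the conclusion (i) you are trying to prove (relative $Aa$-definability of $D_\frak q(a)$ together with the Unbounded hypothesis yields it, and conversely), so invoking it here is circular. Moreover, after you build $\theta_0$, the set $\theta_0(a,\Mon)\cap q(\Mon)$ is bounded above by $b''$ but need not be bounded below, so you cannot conclude $\theta_0$ is $\frak q$-bounded and hit the Unbounded assumption directly either.

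The paper's actual argument avoids all of this and does \emph{not} use strong regularity (the paper explicitly flags that strong regularity only enters later, in Lemma~\ref{L_bdd_vdashq}). It argues by contradiction: if $b'\in D_\frak q(a)\smallsetminus\theta^d(a,\Mon)$, pick $\varphi(a,y)\in\tp(b'/Aa)$ witnessing $b'\nmodels\frak q_{\strok Aa}$ and set $\psi(x,y):=\varphi(x,y)\wedge\neg\theta^d(x,y)$. Then $\psi(a,y)\in\tp(b'/Aa)$, it is bounded above within $q(\Mon)$ because $\varphi(a,\Mon)\cap q(\Mon)\subseteq D_\frak q(a)<I_\frak q(a)$ (Lemma~\ref{Ld<i}(iii)), and it is bounded below because $\theta^d(a,\Mon)$ is downwards closed and contains $b$, so $\neg\theta^d(a,\Mon)\cap q(\Mon)$ lies above any $b_0$ with $\eps_\frak q(b_0)<_\frak q\eps_\frak q(b)$. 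Hence $\psi(a,y)$ is $\frak q$-bounded, directly contradicting the running Unbounded assumption. You should replace the cofinality step with this boundedness argument; with that change your proof of (i) would be correct.
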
 

\begin{proof}  (i) \ Let  $\theta^d(a,y)$ be given by Lemma \ref{Lunbdd_theta*}. Then $\theta^d(a,\Mon)\cap q(\Mon)$ is the downwards closure of $\theta(a,\Mon)\cap q(\Mon)$, and it is clear that $\theta^d(a,y)$ is bounded from above within $q(\Mon)$ (by $b_1$ from the proof of Lemma \ref{Lunbdd_theta*}). Hence $\theta^d(a,y)\notin\frak q_{\strok Aa}$, and therefore $\theta^d(a,\Mon)\cap q(\Mon)\subseteq D_{\mathfrak q}(a)$.

Suppose that (i) fails to be true. Then there is $b'\in D_{\mathfrak q}(a)\smallsetminus \theta^d(a,\Mon)$. Let $\varphi(a,y)\in \tp(b'/Aa)$ witness $b'\in D_{\mathfrak q}(a)$. Define  $\psi(x,y):= \varphi(x,y)\wedge \neg  \theta^d(x,y)$. To reach a contradiction it suffices to show that $\psi(a,y)\in\tp(b'/Aa)$ is $\frak q$-bounded.   $\psi(a,y)$   is  consistent with $q(y)$ because $b'$ satisfies it; $\psi(a,y)$ is bounded from above within $q(\Mon)$ because $\varphi(a,y)$ is so. It remains to show that $\psi(a,y)$  is bounded from below in $q(\Mon)$. For, it suffices to note that $\theta^d(a,y)$, being downwards closed in $q(\Mon)$, contains all $b_0$ for which $\eps_\frak q(b_0)<_\frak q b$ (because $b$ satisfies $\theta^d(a,y)$). Hence $\neg \theta^d(a,y)$ is bounded from  below by any such $b_0$, and so is $\psi(a,y)$. Therefore $\psi(a,y)$  is $\frak q$-bounded. A contradiction.   

\smallskip (ii)  Follows immediately from part (i).
\end{proof}

\begin{lem}\label{L_bdd_scl=scl}  
The following conditions are all equivalent for $a,a'$ realizing $p$: 
\begin{center}
  (1) \ $\eps_\frak p(a)=\eps_\frak p(a')$; \ \ (2) \ 
  $D_{\mathfrak q}(a)=D_{\mathfrak q}(a')$; \ \ (3)  \ 
  $\pi_\frak q(D_{\mathfrak q}(a))= \pi_\frak q(D_{\mathfrak q}(a'))$.
\end{center}
 \end{lem}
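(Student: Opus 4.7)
The strategy is to prove $(2)\iff(3)$, $(2)\Rightarrow(1)$, and $(1)\Rightarrow(2)$. For $(2)\iff(3)$, apply Lemma \ref{LDscl_closed} to the regular $A$-asymmetric type $\frak q$ and the single-element tuple $a$ to conclude that $D_\frak q(a)$ is $\eps_\frak q$-closed; hence $D_\frak q(a)=\pi_\frak q^{-1}(\pi_\frak q(D_\frak q(a)))$, and likewise for $a'$, so the two $D_\frak q$'s coincide iff their $\pi_\frak q$-images do. For $(2)\Rightarrow(1)$, argue by contrapositive: if $\eps_\frak p(a)\neq\eps_\frak p(a')$ then after possibly swapping $a$ and $a'$, the pair $(a,a')$ is a Morley sequence in $\frak p$ over $A$; applying Lemma \ref{LDworsas} with the roles of $\frak p$ and $\frak q$ interchanged (legitimate since both are regular and $A$-asymmetric and $\frak p_{\strok A}\nwor\frak q_{\strok A}$ is symmetric) forces $D_\frak q(a)$ and $D_\frak q(a')$ to be strictly $\subseteq$-comparable, hence distinct.

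For the substantive direction $(1)\Rightarrow(2)$, assume $\eps_\frak p(a)=\eps_\frak p(a')$. By Lemma \ref{L_bdd_rel_def} there is an $A$-formula $\delta(x,y)$ satisfying $D_\frak q(c)=\delta(c,\Mon)\cap q(\Mon)$ for every $c\models p$; thus for each $b\models q$ the inverse set $D_\frak q^{-1}(b):=\delta(x,b)\cap p(\Mon)=\{c\models p:b\in D_\frak q(c)\}$ is relatively definable over $Ab$ within $p(\Mon)$. I reduce the proof to showing that $D_\frak q^{-1}(b)$ is $\eps_\frak p$-closed for every $b\models q$: granting this, for $b\in D_\frak q(a)$ one has $a\in D_\frak q^{-1}(b)$, hence $a'\in\eps_\frak p(a)\subseteq D_\frak q^{-1}(b)$, i.e., $b\in D_\frak q(a')$; the symmetric argument yields $D_\frak q(a)=D_\frak q(a')$.

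The $\eps_\frak p$-closedness of $D_\frak q^{-1}(b)$ is the main obstacle, and I expect it to follow by the template of Lemma \ref{LDscl_closed}. Supposing $c\in D_\frak q^{-1}(b)$ and $c^*\in\eps_\frak p(c)\setminus D_\frak q^{-1}(b)$, the plan is to pick an $\frak p$-Morley-independent element over $Abcc^*$ and construct an $Ab$-automorphism that setwise fixes the relatively $Ab$-definable set $D_\frak q^{-1}(b)$ while moving $c$ to a position forced by the $<_\frak p$-convexity and incomparability-closedness of $\eps_\frak p$-classes (Proposition \ref{P_totdeg_cl_iplies_invariant}(iv)) to lie simultaneously inside and outside $D_\frak q^{-1}(b)$---the desired contradiction. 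The essential inputs are the convexity of $\frak p$ over $A$, which controls the $\leq_\frak p$-order-structure of $D_\frak q^{-1}(b)$, and the unbounded non-orthogonality together with strong regularity, which guarantee the uniform relative definability used throughout.
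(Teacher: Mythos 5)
Your handling of $(2)\Leftrightarrow(3)$ and of $\neg(1)\Rightarrow\neg(2)$ matches the paper and is correct, including the observation that Lemma \ref{LDworsas} can be applied with $\frak p$ and $\frak q$ swapped. The reduction of $(1)\Rightarrow(2)$ to the $\eps_\frak p$-closedness of $D_\frak q^{-1}(b)$ is also logically valid. But that reduced statement is where the whole content of the lemma lives, and your proposed argument for it has a genuine gap.

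You say you expect $\eps_\frak p$-closedness of $D_\frak q^{-1}(b)$ ``by the template of Lemma \ref{LDscl_closed},'' namely by producing an $Ab$-automorphism that fixes the relatively $Ab$-definable set $D_\frak q^{-1}(b)$ and moves $c$ into a contradictory position. This template does not transfer. In Lemma \ref{LDscl_closed} the automorphism $f\in\Aut_{A\bar c}(\Mon)$ exists because $b$ and $b'$ both lie in $I_\frak p(\bar c)=\frak p_{\strok A\bar c}(\Mon)$, the locus of a \emph{complete} type over $A\bar c$, so they are conjugate over $A\bar c$. Here, by contrast, $I_\frak q^{-1}(b)=\{c\models p:\,b\models\frak q_{\strok Ac}\}$ is \emph{not} the locus of a complete type over $Ab$, and its elements need not be $Ab$-conjugate; and $c\in D_\frak q^{-1}(b)$ while $c^*\notin D_\frak q^{-1}(b)$, so these two certainly are not $Ab$-conjugate either. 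There is nothing to move to where, and no automorphism to construct. The suggestion to use a Morley-independent element over $Abcc^*$ does not resolve this because the missing conjugacy is the obstruction. The paper avoids this entirely: its proof of $(1)\Rightarrow(2)$ bounds the $Aa$-formula $\psi(a,y):=\exists x'(\sigma(a,x')\wedge\theta(x',y))$ (with $\sigma$ furnished by Lemma \ref{Lscl_rel_definable} so that $\sigma(a,\Mon)\subseteq\eps_\frak p(a)$ and $\theta$ uniformly relatively defining $D_\frak q(-)$ via Lemma \ref{L_bdd_rel_def}) between $D_\frak q(a_0)$ and $D_\frak q(a_1)$ using Lemma \ref{LDworsas}, then deduces $\psi(a,\Mon)\cap q(\Mon)\subseteq D_\frak q(a)$ from boundedness and hence $D_\frak q(a')\subseteq D_\frak q(a)$, with symmetry giving equality. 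That compactness-and-definability argument, not an automorphism argument, is what the hard direction requires; without it your proof is incomplete.
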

\begin{proof}
To prove (1)$\Rightarrow$(2), assume  that $\eps_\frak p(a)=\eps_\frak p(a')$. Let $\theta(a,y)$ relatively define $D_\frak q(a)$ within $q(\Mon)$ and let $a_0, a_1$ be such that $\eps_\frak p(a_0)<_\frak p\eps_\frak p(a)<_\frak p\eps_\frak p(a_1)$\,. By Lemma \ref{Lscl_rel_definable}, there is a formula $\sigma(x,x')\in\tp(a,a'/A)$ such that $\sigma(a,x')\vdash x'\in\eps_\frak p(a)$. We {\em claim} that $\psi(a,y):=\exists x'(\sigma(a,x')\wedge\theta(x',y))$ is bounded from above in $q(\Mon)$. Actually, $\psi(a,y)$ relatively defines $\bigcup_{ a''\in\sigma(a,\Mon)} \theta(a'',\Mon)\cap q(\Mon)$ and we will prove $$\bigcup_{ a''\in\sigma(a,\Mon)} \theta(a'',\Mon)\cap q(\Mon)\subseteq D_\frak q (a_0)\cup D_\frak q(a_1)\,.$$  
So let $a''$ satisfy  $\sigma(a,x')$. Then, by our choice of $\sigma$,  $a''\in\eps_\frak p(a)$. Hence $\theta(a'',\Mon)$ relatively defines  $D_\frak q(a'')$, i.e. $\theta(a'',\Mon)\cap q(\Mon)=D_\frak q(a'')$, and  $\eps_\frak p(a_0)<_\frak p a''<_\frak p\eps_\frak p(a_1)$ holds. By Lemma \ref{LDworsas}, $D_\frak q(a'')$ is strictly included between $ D_\frak q (a_0)$ and $D_\frak q(a_1)$ (in some order),   so $D_\frak q(a'')\subseteq  D_\frak q (a_0)\cup D_\frak q(a_1)$.  This proves the claim. Now $\psi(a,y)$ is bounded from above within $q(\Mon)$, so $\psi(a,\Mon)\cap q(\Mon)\subseteq D_\frak q(a)$. Since $a'$ can witness the existential quantifier in the definition of $\psi$, $\psi(a,\Mon)\supseteq\theta(a',\Mon)$ holds, and we derive $D_\frak q(a')\subseteq D_\frak q(a)$.    
The other inclusion is proved similarly, so $D_\frak q(a')= D_\frak q(a)$.

\smallskip (2)$\Rightarrow$(3) is trivial.  (3)$\Rightarrow$(2) follows from the fact that $D_\frak q(-)$ is $\eps_\frak q$-closed (Lemma \ref{LDscl_closed}). It remains to prove  $\neg$(1)$\Rightarrow\neg$(2). So assume that  $\eps_\frak p(a)\neq\eps_\frak p(a')$. Then   the set $\{a,a'\}$ can be arranged into a Morley sequence in $\mathfrak p$ over $A$. But then, by Lemma \ref{LDworsas}, one of $D_{\mathfrak q}(a)$ and $D_{\mathfrak q}(a')$ is properly contained in the other; in particular, $D_{\mathfrak q}(a)\neq D_{\mathfrak q}(a')$. The proof of the lemma is complete. 
\end{proof}

The lemma implies that  $\eps_\frak p(a)\mapsto D_\frak q(a)$ properly defines a mapping from $\Lin_A(\frak p)$ into the power set of  $\Lin_A(\frak q)$. It will turn out that the so defined mapping induces  an isomorphism (or anti-isomorphism) of Dedekind completions of $\Lin_A(\frak p)$  and $\Lin_A(\frak q)$. To prove that, we need a few more lemmas.

In the proof of the next lemma we will essentially use the assumed strong regularity. 

\begin{lem}\label{L_bdd_vdashq}  
Suppose that $a\models p$ and that $\theta(a,y)$ relatively defines $D_\frak q(a)$.
\begin{enumerate}[(i)]
\item If  $(a,a')$ is a Morley sequence in $\frak p$ over $A$  \ then \ $\phi_\frak q(y)\wedge\neg(\theta(a,y)\Leftrightarrow\theta(a',y))  \vdash q(y)$.
\item $\varphi(x,x'):=\exists z(\neg(\theta(x,z)\Leftrightarrow\theta(x',z))\wedge\phi_\frak q(z) )\,$   relatively defines $\eps_\frak p(x)\neq\eps_\frak p(x')$ within $p(\Mon)^2$.
\item  $\mathfrak p$ and $\mathfrak q$ are simple over $A$. 
\end{enumerate}
\end{lem}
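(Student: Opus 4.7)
My plan is to prove (i) via the contrapositive, exploiting the explicit downward-closed shape of the witnessing formula. Without loss of generality I take $\theta$ to be the canonical $\theta^d$ built from the original formula in Lemma~\ref{Lunbdd_theta*} (which by Lemma~\ref{L_bdd_rel_def}(i) indeed relatively defines $D_\frak q(a)$, so both hypotheses of the lemma are intact). The problem then reduces to a structural description of $\theta^d(a,\Mon)$ on $\phi_\frak q(\Mon)\setminus q(\Mon)$. By construction, $\theta^d(a,\Mon)\subseteq\phi_\frak q(\Mon)$ is $\leq_\frak q$-downward closed, and each of its elements is $\leq_\frak q$-bounded above by some $b_1\in q(\Mon)$ (coming from the compactness choice of $\varphi\in q$ in Lemma~\ref{Lunbdd_theta*}, which gives $\theta(a,z)\wedge\varphi(z)\Rightarrow z<_\frak q b_1$). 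Since $\frak q$ is convex over $A$, $q(\Mon)$ is $\leq_\frak q$-convex in $\phi_\frak q(\Mon)$, so any $y\in\phi_\frak q(\Mon)\setminus q(\Mon)$ satisfies either $y<_\frak q q(\Mon)$ or $y>_\frak q q(\Mon)$.

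In the upper case $y>_\frak q q(\Mon)$, the bound $y<_\frak q b_1$ would be violated, so $y\notin\theta^d(a,\Mon)$; applying the same analysis to $a'$ (with its corresponding upper bound $b_1'\in q(\Mon)$, obtained by $A$-conjugation) yields $y\notin\theta^d(a',\Mon)$. In the lower case $y<_\frak q q(\Mon)$, the non-orthogonality $p\nwor q$ together with Lemma~\ref{Ld<i}(i) guarantees $D_\frak q(a)\neq\emptyset$; picking any $z\in\theta(a,\Mon)\cap q(\Mon)$ (nonempty because it equals the preimage in $q$ of a nonempty set under downward closure) we have $\varphi(z)$, $\theta(a,z)$, and $y\leq_\frak q z$, so $y\in\theta^d(a,\Mon)$, and the same witness construction for $a'$ gives $y\in\theta^d(a',\Mon)$. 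Thus on $\phi_\frak q(\Mon)\setminus q(\Mon)$ the formulas $\theta(a,y)$ and $\theta(a',y)$ agree pointwise, which is exactly the contrapositive of (i).

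Part (ii) is then a direct read-off using (i) and earlier lemmas. If $\models\varphi(a,a')$, a witness $b\in\phi_\frak q(\Mon)$ distinguishing $\theta(a,y)$ from $\theta(a',y)$ forces $b\models q$ by (i), so $b\in D_\frak q(a)\,\triangle\,D_\frak q(a')$, giving $D_\frak q(a)\neq D_\frak q(a')$, and Lemma~\ref{L_bdd_scl=scl} delivers $\eps_\frak p(a)\neq\eps_\frak p(a')$. Conversely, if $\eps_\frak p(a)\neq\eps_\frak p(a')$, one arranges $\{a,a'\}$ into a Morley sequence in $\frak p$ over $A$ and invokes Lemma~\ref{LDworsas} to obtain $D_\frak q(a)\neq D_\frak q(a')$; any element of the symmetric difference lies in $q(\Mon)\subseteq\phi_\frak q(\Mon)$ and witnesses $\varphi(a,a')$. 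Part (iii) follows immediately: (ii) shows $\neg\varphi(x,x')$ relatively defines $x\in\eps_\frak p(x')$ on $p(\Mon)^2$, so $\frak p$ is simple over $A$; exchanging the roles of $\frak p$ and $\frak q$ throughout and using Lemma~\ref{L_bdd_rel_def}(ii) for the relative definability of $D_\frak p(b)$ within $p(\Mon)$ produces the analogous symmetric-difference formula on $q(\Mon)^2$, giving simplicity of $\frak q$.

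The single delicate point is the structural description in the first step: verifying that for the specific canonical $\theta=\theta^d$, membership in $\theta^d(a,\Mon)$ is constant on each of the two halves of $\phi_\frak q(\Mon)\setminus q(\Mon)$. This requires the simultaneous interplay of three ingredients---the $b_1$-upper-bound from Lemma~\ref{Lunbdd_theta*}, the $\leq_\frak q$-convexity of $q(\Mon)$ in $\phi_\frak q(\Mon)$ supplied by the convexity of $\frak q$, and the non-emptiness of $D_\frak q(a)$ granted by $p\nwor q$. Once this is established, (ii) is a translation exercise and (iii) is formal.
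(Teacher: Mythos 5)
Your proposal for part (i) has two genuine gaps that make the argument fail.

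First, the reduction ``WLOG $\theta=\theta^d$'' is not legitimate. The conclusion of (i) is a statement about the specific formula $\theta$ named in the hypothesis, not merely about the set $D_\frak q(a)$. Two formulas $\theta$ and $\theta^d$ can relatively define the same set $D_\frak q(a)$ within $q(\Mon)$ while behaving completely differently on $\phi_\frak q(\Mon)\smallsetminus q(\Mon)$, which is exactly the region your contrapositive argument is about. Proving the implication for $\theta^d$ therefore tells you nothing about the given $\theta$.

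Second, and more fundamentally, the dichotomy ``any $y\in\phi_\frak q(\Mon)\setminus q(\Mon)$ satisfies either $y<_\frak q q(\Mon)$ or $y>_\frak q q(\Mon)$'' is not a consequence of convexity. The order $<_\frak q$ is only a partial order on $\Mon$ (or on $\phi_\frak q(\Mon)$), not a linear one. Convexity of $q(\Mon)$ rules out a non-realization sitting strictly $<_\frak q$-between two realizations of $q$, but it says nothing about elements that are $<_\frak q$-incomparable to some or all of $q(\Mon)$. For such a $y$, the downward-closure definition $\theta^d(a,y)=\phi_\frak q(y)\wedge\exists z(\varphi(z)\wedge\theta(a,z)\wedge y\leq_\frak q z)$ may be witnessed by some $z$ that itself lies outside $q(\Mon)$, and there is no mechanism forcing the truth values of $\theta^d(a,y)$ and $\theta^d(a',y)$ to agree. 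So the ``two halves'' picture that your whole case analysis rests on simply need not exist.

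The paper's proof sidesteps all of this with a short soft argument that works uniformly for any $\theta$ relatively defining $D_\frak q(a)$: if $b\in\phi_\frak q(\Mon)$ witnesses $\neg(\theta(a,b)\Leftrightarrow\theta(a',b))$, then $\tp(a,b/A)\neq\tp(a',b/A)$ even though $a\equiv a'\,(A)$, so $\tp(b/A)\nwor p$; by the weight-one property (Proposition~\ref{prop_nwor_transitivity}) this gives $\tp(b/A)\nwor q$; and then strong regularity of $(\frak q,\phi_\frak q)$ forces $b\models q$, since otherwise $q\vdash\frak q_{\strok Ab}$ would yield $\tp(b/A)\wor q$. Strong regularity is the key ingredient your argument never really invokes, and it is what makes part (i) true without any combinatorial bookkeeping on $\phi_\frak q(\Mon)\smallsetminus q(\Mon)$. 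Your treatments of (ii) and (iii), given (i), are essentially correct and match the paper.
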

\begin{proof}(i) 
Assume \  $\models\phi_\frak q(b)\wedge\neg(\theta(a,b)\Leftrightarrow\theta(a',b))$ \ and let $r=\tp(b/A)$. Then  $\models  \neg(\theta(a,b)\Leftrightarrow\theta(a',b))$ implies $r\nwor p$. $r\nwor p$ and $p\nwor q$, by Proposition \ref{prop_nwor_transitivity}, imply $r\nwor q$. Since $\phi_\frak q(x)$ belongs to $r$  and  witnesses  strong regularity of $\frak q$, we deduce $r=q$. Therefore, any element satisfying the formula realizes $q$. This completes the proof. 
 
\smallskip (ii)  
For $a,a'$ realizing $p$ the following conditions are all equivalent:
\begin{center}
 (1) \   $\models \varphi(a,a')$; \ \ \ 
 (2) \   $D_\frak q(a)\neq D_\frak q(a')$; \ \ \ 
 (3) \ $\eps_\frak p(a)\neq \eps_\frak p(a')$.
\end{center}
(1) means that some element of $\phi_\frak q(\Mon)$ is in the difference of $\theta(a,\Mon)$ and $\theta(a',\Mon)$; that element, by part (i), realizes $q$  and is in the difference of $D_\frak q(a)$ and $D_\frak q(a')$. The converse is similar, so (1) and (2) are equivalent. The equivalence of (2) and (3) is proven in Lemma \ref{L_bdd_scl=scl}. 
The equivalence of (1) and (3) implies that $\neg \varphi(x,x')$ relatively defines $\eps_\frak p(x)=\eps_\frak p(x')$ within $p(\Mon)^2$. This proves (ii) and then (iii) follows.
\end{proof}

By Proposition \ref{P_conv_simple_invariants} we immediately obtain:

\begin{cor}  $\mathbb L_A(\frak p)$ and $\mathbb L_A(\frak q)$ are dense linear orders.
\end{cor}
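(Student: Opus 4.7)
The plan is to deduce the corollary as a direct application of Proposition \ref{P_conv_simple_invariants} to each of $\frak p$ and $\frak q$ over $A$, taking the ambient model to be $\Mon$ itself. Indeed, since a maximal Morley sequence in $\frak p$ over $A$ with terms in $\Mon$ must select exactly one representative from every $\eps_\frak p$-class (by Corollary \ref{cor_order_and_morley}(ii), maximality forces the $\pi_\frak p$-values to exhaust $\Lin_A(\frak p)$), we have $\Inv_{\frak p,A}(\Mon)\cong\mathbb L_A(\frak p)$, and similarly for $\frak q$. So it suffices to verify the three hypotheses of Proposition \ref{P_conv_simple_invariants} for both types.

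First, simplicity of $\frak p$ and $\frak q$ over $A$ is precisely Lemma \ref{L_bdd_vdashq}(iii), proved immediately above. Second, convexity over $A$ follows from Lemma \ref{L_SR_is_convex}, since the standing assumption of the subsection is that both $(\frak p,\phi_\frak p)$ and $(\frak q,\phi_\frak q)$ are strongly regular. Third, each of $\mathbb L_A(\frak p)$ and $\mathbb L_A(\frak q)$ has at least two elements: by $A$-asymmetry of $\frak p$ (part of Assumption \ref{ass_sr}) we may pick a Morley sequence $(a,b)$ in $\frak p$ over $A$, and by Corollary \ref{cor_order_and_morley}(i) this forces $\eps_\frak p(a)<_\frak p\eps_\frak p(b)$, so at least two distinct classes exist; the same argument works for $\frak q$ (which is $A$-asymmetric by assumption, or alternatively by Lemma \ref{LDworsas}(iii)).

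With all three conditions verified, Proposition \ref{P_conv_simple_invariants} applied to $M=\Mon$ delivers that both $\Inv_{\frak p,A}(\Mon)$ and $\Inv_{\frak q,A}(\Mon)$ are dense linear orders, which via the identification above yields the density of $\mathbb L_A(\frak p)$ and $\mathbb L_A(\frak q)$. There is no genuine mathematical obstacle: the entire content is the reduction to Proposition \ref{P_conv_simple_invariants}, with simplicity supplied by the preceding lemma and convexity supplied by the strong regularity assumption.
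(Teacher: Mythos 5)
Your proposal is correct and follows exactly the route the paper intends: the corollary is stated as an immediate consequence of Proposition \ref{P_conv_simple_invariants}, with simplicity supplied by Lemma \ref{L_bdd_vdashq}(iii), convexity already part of Assumption \ref{ass_sr} (your citation of Lemma \ref{L_SR_is_convex} is a slight detour but harmless), and the two-element hypothesis coming from $A$-asymmetry. The identification $\Inv_{\frak p,A}(\Mon)\cong\mathbb L_A(\frak p)$ that you use is indeed what Proposition \ref{P_totdeg_cl_iplies_invariant}(iii) gives, so the whole argument matches the paper's.
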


Let $\mathbb L=(L,<)$ be a dense linear order  with or without end points. By a  Dedekind completion of $\mathbb L$ we mean    $\mathcal D(\mathbb L)=(\mathcal D(L),\subsetneq)$, where $\mathcal D(L)$ consists of $\emptyset$ and  the set of all initial segments of $L$ which do not have (contain) maximum. $\mathcal D(L)$ is complete, $\emptyset$ is its minimum and its maximum is either $L$ (if $L$ does not have maximum) or $\{x\mid x<c\}$ (if $c$ is the maximum).  
There is a natural embedding of $L$ into $\mathcal D(L)$: the minimum  (if it exists) is mapped to $\emptyset$,    and any other $a\in L$ is mapped to $\{x\in L\mid x<a\}$. In this way $L$ is identified with a dense subset of $\mathcal D(L)$. 

\begin{lem}\label{Lun_D_no_maximum} 
 $\pi_\frak q(D_\frak q(a))\in\mathcal D(\Lin_A(\frak q))$ \ for all $a\models \frak p$. 
\end{lem}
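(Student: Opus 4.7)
The plan splits the conclusion into two parts: that $\pi_{\frak q}(D_{\frak q}(a))$ is an initial segment of $\mathbb L_A(\frak q)$, and that it has no maximum. Part one is essentially bookkeeping: $D_{\frak q}(a)$ is downward closed in $q(\Mon)$ by Lemma~\ref{Ld<i}(ii) and $\eps_{\frak q}$-closed by Lemma~\ref{LDscl_closed}, and these two properties transfer through $\pi_{\frak q}$ to downward closedness in $\mathbb L_A(\frak q)$. The real substance is the absence of a maximum. I would assume for contradiction that $\eps_{\frak q}(b_0)$ is the maximum of $\pi_{\frak q}(D_{\frak q}(a))$ for some $b_0\in D_{\frak q}(a)$ and split by whether $\pi_{\frak q}(D_{\frak q}(a))$ is a singleton.

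If $D_{\frak q}(a)=\eps_{\frak q}(b_0)$ is a single $\eps_{\frak q}$-class, I would produce $a^{\ast}\models p$ with $D_{\frak q}(a^{\ast})\subsetneq D_{\frak q}(a)$: in the commutative case set $a^{\ast}\models\frak p_{\strok Aa}$ and apply Lemma~\ref{LDworsas} with the roles of $\frak p$ and $\frak q$ reversed; in the non-commutative case, start from any Morley pair in $\frak p$ over $A$ and transport by an $A$-automorphism so that $(a^{\ast},a)$ becomes Morley, again applying Lemma~\ref{LDworsas}. Since $D_{\frak q}(a^{\ast})$ is itself $\eps_{\frak q}$-closed, a proper subset of the single class $\eps_{\frak q}(b_0)$ must be empty, so $D_{\frak q}(a^{\ast})=\emptyset$. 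This contradicts $p\nwor q$ via Lemma~\ref{Ld<i}(i).

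Otherwise $\pi_{\frak q}(D_{\frak q}(a))$ contains a class strictly below $\eps_{\frak q}(b_0)$, and $\pi_{\frak q}(I_{\frak q}(a))$ contains one strictly above (by Lemma~\ref{Ld<i}(iii)), so $\eps_{\frak q}(b_0)$ sits strictly inside $\mathbb L_A(\frak q)$. My plan is to extract a $\frak q$-bounded formula in $\tp(a,b_0/A)$ to contradict the standing unbounded-case hypothesis. I pick $\theta(x,y)$ over $A$ relatively defining $D_{\frak q}(a)$ within $q(\Mon)$ (Lemma~\ref{L_bdd_rel_def}(ii)), moreover take $\theta=\theta^d$ from Lemma~\ref{Lunbdd_theta*} so that $\theta(a,\Mon)$ is downward closed in $\phi_{\frak q}(\Mon)$, and let $\sigma_{\frak q}$ be the symmetric $A$-formula relatively defining $\eps_{\frak q}$ provided by simplicity (Lemma~\ref{L_bdd_vdashq}(iii)). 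Maximality of $\eps_{\frak q}(b_0)$ renders $q(y)\cup\{\theta(a,y),\,b_0<_{\frak q}y,\,\neg\sigma_{\frak q}(y,b_0)\}$ inconsistent, so by compactness some $\phi\in q$ satisfies $\phi(y)\wedge\theta(a,y)\wedge b_0<_{\frak q}y\Rightarrow\sigma_{\frak q}(y,b_0)$. Defining
\[
\mu(x,y):=\phi(y)\wedge\theta(x,y)\wedge\forall y'\bigl(\phi(y')\wedge\theta(x,y')\rightarrow\neg(y<_{\frak q}y')\vee\sigma_{\frak q}(y,y')\bigr),
\]
a short verification using the compactness consequence, Proposition~\ref{P_totdeg_cl_iplies_invariant}(iv), and linearity of $\mathbb L_A(\frak q)$ shows $\mu(a,\Mon)\cap q(\Mon)=\eps_{\frak q}(b_0)$; being sandwiched between the two witnesses above, this makes $\mu(a,y)$ a $\frak q$-bounded formula in $\tp(a,b_0/A)$, the desired contradiction.

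The main obstacle is the universal clause in $\mu(a,y)$ when $y'\in\theta(a,\Mon)\setminus q(\Mon)$, which the compactness step does not control. Here the choice $\theta=\theta^d$, downward closed in $\phi_{\frak q}(\Mon)$, together with $<_{\frak q}$-convexity of $q(\Mon)$, forces any such $y'$ to lie $<_{\frak q}$-below or $<_{\frak q}$-incomparable to every element of $q(\Mon)$: convexity precludes $y'$ from being strictly between two elements of $q(\Mon)$, while the remaining alternative ``$y'>_{\frak q}q(\Mon)$'' would, by downward closedness of $\theta^d$, entail $q(\Mon)\subseteq\theta^d(a,\Mon)$ and hence $D_{\frak q}(a)=q(\Mon)$, contradicting $I_{\frak q}(a)\neq\emptyset$. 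Thus for $y\in\eps_{\frak q}(b_0)$, $\neg(y<_{\frak q}y')$ holds automatically and the universal clause is satisfied.
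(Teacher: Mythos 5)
Your proposal follows the same route as the paper's proof: assume $\pi_\frak q(D_\frak q(a))$ has a maximum $\eps_\frak q(b_0)$, manufacture a formula in $\tp(a,b_0/A)$ whose trace on $q(\Mon)$ is contained in that maximal class, and contradict the unboundedness hypothesis. You are considerably more explicit than the paper, and you usefully split off the singleton case $D_\frak q(a)=\eps_\frak q(b_0)$: there the would-be $\frak q$-bounded formula has no lower bound in $q(\Mon)$, so the main construction does not directly apply, and your argument via Lemma~\ref{LDworsas} (producing $a^\ast$ with $D_\frak q(a^\ast)\subsetneq D_\frak q(a)$, hence $D_\frak q(a^\ast)=\emptyset$, contradicting $p\nwor q$) correctly disposes of a case the paper's terse write-up passes over.

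There is, however, a gap in your final paragraph. To verify the universal clause of $\mu(a,b_0)$ for $y'\in\phi(\Mon)\cap\theta^d(a,\Mon)\smallsetminus q(\Mon)$ you split into ``$y'$ strictly between two elements of $q(\Mon)$'' (killed by convexity) and ``$y'>_\frak q q(\Mon)$'' (killed by downward closedness of $\theta^d$). But $<_\frak q$ is only a partial order on $\phi_\frak q(\Mon)$, so these cases are not exhaustive: $y'$ could be $>_\frak q$ some realization of $q$, $<_\frak q$ none of them, and incomparable to the rest; in that mixed case downward closedness does not pull all of $q(\Mon)$ into $\theta^d(a,\Mon)$. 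The cleanest repair uses a fact you already set up but did not invoke: the construction of $\theta^d$ in Lemma~\ref{Lunbdd_theta*} shows every $y'\in\theta^d(a,\Mon)$ satisfies $y'\leq_\frak q z<_\frak q b_1$ for the $b_1\models q$ fixed in that lemma's proof, hence $y'<_\frak q b_1$. So if $b_0<_\frak q y'$ then $b_0<_\frak q y'<_\frak q b_1$ with both endpoints realizing $q$, and $<_\frak q$-convexity of $q(\Mon)$ forces $y'\models q$, a contradiction. (Alternatively, choose $\sigma_\frak q$ so that $\sigma_\frak q(b_0,\cdot)\vdash q$, which convexity makes possible via Lemma~\ref{Lscl_rel_definable}(iii); then the compactness implication, which holds for all $y'$ and not just $y'\models q$, gives $\sigma_\frak q(y',b_0)$ and hence $y'\models q$ directly.) Note also that you only need $\models\mu(a,b_0)$ and $\mu(a,\Mon)\cap q(\Mon)\subseteq\eps_\frak q(b_0)$, not the full equality, to conclude that $\mu$ is $\frak q$-bounded.
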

\begin{proof}
We already know that $D_\frak q(a)$ is downwards closed, so  $\pi_\frak q(D_\frak q(a))$ is an initial segment of $\Lin_A(\frak p)$. It remains to show that $\pi_\frak q(D_\frak q(a))$ does not have maximum. Otherwise,  the formula $\psi(a,y)$ 
saying that `no element of $D_\frak q(a)$ is bigger than $\eps_\frak q(y)$' would be satisfied in $D_\frak q(a)$ exclusively by the elements of the maximum and hence $\psi(a,y)$ would be $\frak q$-bounded. A contradiction. 
\end{proof}

 By Lemmas \ref{L_bdd_scl=scl} and  \ref{Lun_D_no_maximum} the following definition is proper:   
 \begin{center}
 $F(\eps_\frak p(a)):= \pi_\frak q(D_\frak q(a))$ \ \ defines  \ \ 
  $F:\Lin_A(\frak p)\longrightarrow  \mathcal D(\Lin_A(\frak q))$\,; 
 \end{center}
 
\begin{lem}\label{LdenseF} 
\begin{enumerate}[(i)]
\item  $F:\Lin_A(\frak p)\longrightarrow  \mathcal D(\Lin_A(\frak q))$ \ is   strictly monotone. More precisely
\begin{itemize}
\item   $F$ is strictly increasing \ iff  \  $(\frak p\otimes\frak q)_{\strok A}\neq(\frak q\otimes\frak p)_{\strok A}$;
\item   $F$ is strictly decreasing \ iff \ $(\frak p\otimes\frak q)_{\strok A}=(\frak q\otimes\frak p)_{\strok A}$.
\end{itemize}
\item $F[\Lin_A(\frak p)]$ is a dense subset of $\mathcal D(\Lin_A(\frak q))$.
\end{enumerate}
\end{lem}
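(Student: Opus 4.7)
The plan is to prove (i) by a straightforward application of the symmetric analog of Lemma \ref{LDworsas}, and (ii) by a density argument whose key point is a case split according to whether $\frak p$ and $\frak q$ commute. For (i), I would take a Morley sequence $(a, a')$ in $\frak p$ over $A$ (equivalently $\eps_\frak p(a) <_\frak p \eps_\frak p(a')$) and apply Lemma \ref{LDworsas} with the roles of $\frak p$ and $\frak q$ interchanged, which is legitimate since $q \nwor \frak p_{\strok A}$ by Assumption \ref{ass_sr}. This yields $D_\frak q(a') \subsetneq D_\frak q(a)$ in the commutative case and $D_\frak q(a) \subsetneq D_\frak q(a')$ otherwise. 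Because each $D_\frak q(-)$ is $\eps_\frak q$-closed by Lemma \ref{LDscl_closed}, strict set inclusion passes through $\pi_\frak q$, so these translate to the corresponding strict inclusions of $F(\eps_\frak p(a))$ and $F(\eps_\frak p(a'))$ in $\mathcal D(\Lin_A(\frak q))$, giving monotonicity in the claimed direction.

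For (ii), the plan is to reduce density of $F[\Lin_A(\frak p)]$ in $\mathcal D(\Lin_A(\frak q))$ to a concrete existence statement. Given $X_1 \subsetneq X_2$ in $\mathcal D(\Lin_A(\frak q))$, I would first pick $\eps_\frak q(b) \in X_2 \setminus X_1$ and, using that $X_2$ has no maximum, pick $\eps_\frak q(b'') \in X_2$ with $\eps_\frak q(b) <_\frak q \eps_\frak q(b'')$, so that $(b, b'')$ is a Morley sequence in $\frak q$ over $A$ by Corollary \ref{cor_order_and_morley}. Density then reduces to producing $a \models p$ with $b \in D_\frak q(a)$ and $b'' \in I_\frak q(a)$: once such $a$ is in hand, the downward closure of $D_\frak q(a)$ together with $X_1 \subseteq \{y <_\frak q \eps_\frak q(b)\}$ and $\{y <_\frak q \eps_\frak q(b'')\} \subseteq X_2$ immediately force $X_1 \subsetneq F(\eps_\frak p(a)) \subsetneq X_2$, with strictness witnessed by $\eps_\frak q(b)$ on the left and $\eps_\frak q(b'')$ on the right.

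To produce such $a$ I would split into cases. The commutative case is easy: $a \in D_\frak p(b)$ is equivalent to $b \in D_\frak q(a)$ (and analogously with $I$), so it suffices to pick $a \in D_\frak p(b) \cap I_\frak p(b'')$, which is nonempty by the strict inclusion $D_\frak p(b'') \subsetneq D_\frak p(b)$ from Lemma \ref{LDworsas}(i). The non-commutative case is the main obstacle, because there $a \in D_\frak p(b'')$ does not imply $b'' \in I_\frak q(a)$, so no element of a simple intersection built from Lemma \ref{LDworsas} will do. I would instead pin down both conditions at once by a three-type extension: since $\tp(b,b''/A) = (\frak q \otimes \frak q)_{\strok A}$ agrees with the projection of $(\frak q \otimes \frak p \otimes \frak q)_{\strok A}$ onto its outer variables, $A$-invariance together with homogeneity of $\Mon$ yields $a$ such that $(b, a, b'') \models (\frak q \otimes \frak p \otimes \frak q)_{\strok A}$. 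Then $b'' \models \frak q_{\strok Aab}$ restricts to $b'' \in I_\frak q(a)$, while $a \models \frak p_{\strok Ab}$ makes $(a, b)$ realize $(\frak q \otimes \frak p)_{\strok A}$, and non-commutativity forces $b \nmodels \frak q_{\strok Aa}$, i.e.\ $b \in D_\frak q(a)$, completing the argument.
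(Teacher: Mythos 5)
Your proposal is correct and follows essentially the same route as the paper: part (i) via Lemma \ref{LDworsas} with the roles of $\frak p$ and $\frak q$ swapped, and part (ii) by picking $b <_\frak q b''$ inside $J\smallsetminus I$ and producing $a\models p$ with $b\in D_\frak q(a)$ and $b''\in I_\frak q(a)$. The only difference is one of explicitness: where the paper simply says ``by Lemma \ref{LDworsas} we can pick $a$ such that\ldots'' (and, incidentally, appears to transpose $b$ and $b'$ in its description of the commutative case), you correctly unpack the mechanism --- the $(\frak q\otimes\frak p\otimes\frak q)$ realization from the proof of Lemma \ref{LDworsas}(ii) in the non-commutative case, and the $D_\frak p(b)\smallsetminus D_\frak p(b'')$ choice combined with the commutativity equivalence $a\in D_\frak p(b)\Leftrightarrow b\in D_\frak q(a)$ in the commutative case --- so your write-up is a faithful, somewhat fuller version of the paper's argument.
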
 
\begin{proof}(i) follows immediately from  Lemma \ref{LDworsas} so we prove only (ii).
Assume that $I\subsetneq J$ are elements of $\mathcal D(\Lin_A(\frak q))$. Then, since $J$ has no maximum, there are $b,b'\models q$ such that $I<_\frak q\eps_\frak q(b)<_\frak q\eps_\frak q(b')$ and $\eps_\frak q(b'),\eps_\frak q(b)\in J$. By Lemma \ref{LDworsas} we can pick $a\models p$ such that: either $b\in D_\frak q(a)$ and $b'\in I_\frak q(a)$ (in the non-commutative case), or $b'\in D_\frak q(a)$ and $b\in I_\frak q(a)$ (in the commutative case).   Then $I<_\frak q \pi_\frak q(D_\frak q(a))<_\frak q J$ so $F[\Lin_A(\frak p)]$ is  dense. 
\end{proof}

The following describes all relevant combinatorial properties in our situation:
\begin{enumerate}
\item $\mathbb L_A(\frak p)$ and $\mathbb L_A(\frak q)$ are dense linear orders;  
\item $F:\Lin_A(\frak p)\longrightarrow  \mathcal D(\Lin_A(\frak q))$ \ is   strictly monotone; 
\item $F[\Lin_A(\frak p)]$ is a dense subset of $\mathcal D(\Lin_A(\frak q))$.
\end{enumerate}
We leave to the reader   to verify   a purely combinatorial fact that 
whenever two  linear orders and a mapping  $F$  satisfy the three conditions, then $F$ can be naturally extended to the Dedekind completion of the domain. Moreover, if $F$ is increasing then
\begin{center}
  $\mathcal D(F):\mathcal D(\Lin_A(\frak p))\longrightarrow  \mathcal D(\Lin_A(\frak q))$ \ \ defined by \ \ $\displaystyle\mathcal D(F)(I):= \bigcup_{\eps_\frak p(x)\in I}F(\eps_\frak p(x))$
\end{center}is an isomorphism of $\mathcal D(\mathbb L_A(\frak p))$ and   $\mathcal D(\mathbb L_A(\frak q))$; if $F$ is decreasing then 
\begin{center}
   $\mathcal D^*(F):\mathcal D(\Lin_A(\frak p))\longrightarrow  \mathcal D(\Lin_A(\frak q))$ \ \ defined by \ \   $\displaystyle\mathcal D^*(F)(I):= \bigcap_{\eps_\frak p(x)\in I}F(\eps_\frak p(x))$.
 \end{center} 
is an anti-isomorphism of  $\mathcal D(\mathbb L_A(\frak p))$ and   $\mathcal D(\mathbb L_A(\frak q))$.  Therefore:

\begin{prop}\label{prop_unbounded_iso_monster}(i) If $(\frak p\otimes\frak q)_{\strok A}\neq(\frak q\otimes\frak p)_{\strok A}$   then   $\mathcal D(\mathbb L_A(\frak p))$ and $\mathcal D(\mathbb L_A(\frak q))$ are isomorphic via $\mathcal D(F)$. 

\smallskip (ii)  If $(\frak p\otimes\frak q)_{\strok A}=(\frak q\otimes\frak p)_{\strok A}$   then   $\mathcal D(\mathbb L_A(\frak p))$ and   $\mathcal D(\mathbb L_A(\frak q))$ are anti-isomorphic via $\mathcal D^*(F)$. 
\end{prop}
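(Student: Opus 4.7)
The plan is to reduce the statement to the purely combinatorial fact invoked by the authors, and verify the extensions $\mathcal D(F)$ and $\mathcal D^*(F)$ really are (anti)isomorphisms of the Dedekind completions. All the model-theoretic content has already been packaged into the three facts listed just before the statement: (1) $\mathbb L_A(\frak p)$ and $\mathbb L_A(\frak q)$ are dense linear orders; (2) $F$ is strictly monotone, with the direction dictated by whether $\frak p$ and $\frak q$ commute (Lemma \ref{LdenseF}(i)); and (3) $F[\Lin_A(\frak p)]$ is dense in $\mathcal D(\mathbb L_A(\frak q))$ (Lemma \ref{LdenseF}(ii)). So I would split the argument into cases (i) and (ii) and in each case check: well-definedness of the extension, injectivity, order-(anti)preservation, and surjectivity.

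For case (i), assume $(\frak p\otimes\frak q)_{\strok A}\neq(\frak q\otimes\frak p)_{\strok A}$, so $F$ is strictly increasing. First I would check that $\mathcal D(F)(I)=\bigcup_{\eps_\frak p(x)\in I}F(\eps_\frak p(x))$ actually belongs to $\mathcal D(\mathbb L_A(\frak q))$: it is a union of initial segments without maximum (each $F(\eps_\frak p(x))$ is such by Lemma \ref{Lun_D_no_maximum}), hence itself an initial segment; and it has no maximum because if $\eps_\frak q(b)\in\mathcal D(F)(I)$ then $\eps_\frak q(b)\in F(\eps_\frak p(x))$ for some $\eps_\frak p(x)\in I$, and $F(\eps_\frak p(x))$ has no maximum, so some strictly larger element also lies in $\mathcal D(F)(I)$. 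Order-preservation is immediate from the monotonicity of $F$ together with the fact that $I\subsetneq J$ in $\mathcal D(\mathbb L_A(\frak p))$ forces a strict increase in the union, using density of $F[\Lin_A(\frak p)]$ to place an element of $F[J\smallsetminus I]$ above all of $\mathcal D(F)(I)$. Injectivity follows from strict order-preservation. Surjectivity is the main technical point: given $J\in\mathcal D(\mathbb L_A(\frak q))$, set $I:=\{\eps_\frak p(x)\mid F(\eps_\frak p(x))\subsetneq J\}$. Then $I\in\mathcal D(\mathbb L_A(\frak p))$ (it is initial and has no maximum by density of $F[\Lin_A(\frak p)]$) and $\mathcal D(F)(I)=J$: the inclusion $\subseteq$ is clear, and $\supseteq$ follows from density, which lets us realize any element of $J$ as lying in some $F(\eps_\frak p(x))\subsetneq J$.

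For case (ii), assume $(\frak p\otimes\frak q)_{\strok A}=(\frak q\otimes\frak p)_{\strok A}$, so $F$ is strictly decreasing. The natural guess is $\mathcal D^*(F)(I):=\bigcap_{\eps_\frak p(x)\in I}F(\eps_\frak p(x))$ (with the convention $\mathcal D^*(F)(\emptyset):=L$, the top element of $\mathcal D(\mathbb L_A(\frak q))$). One verifies as above that this lies in $\mathcal D(\mathbb L_A(\frak q))$: an intersection of initial segments is an initial segment, and absence of maximum follows because a maximum of the intersection would have to be a maximum of some $F(\eps_\frak p(x))$, which has none. Order-reversal is again immediate from $F$'s monotonicity, and surjectivity is symmetric: given $J$, take $I:=\{\eps_\frak p(x)\mid J\subsetneq F(\eps_\frak p(x))\}$ and use density of $F[\Lin_A(\frak p)]$.

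The main obstacle I anticipate is not the bijection itself but the bookkeeping at the extremes of $\mathcal D$: the empty initial segment and the top element require separate careful handling (particularly in the decreasing case, where the intersection over $\emptyset$ has to be set by convention, and in the dual direction where we must argue that $\mathcal D^*(F)(I)$ is genuinely a proper initial segment when $I$ is nonempty). These are boundary checks that fall out cleanly once density of $F[\Lin_A(\frak p)]$ in $\mathcal D(\mathbb L_A(\frak q))$ is used symmetrically on both sides; no further model theory is needed beyond what is already established in Lemmas \ref{LDworsas}, \ref{L_bdd_scl=scl}, \ref{Lun_D_no_maximum} and \ref{LdenseF}.
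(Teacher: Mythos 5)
Your overall strategy matches the paper's: reduce to the combinatorial situation isolated by the three conditions (dense linear orders, strict monotonicity of $F$, density of the image) and verify the extension to Dedekind completions directly. Your verification of the increasing case (i) is correct. But case (ii) has a genuine gap, and it is located exactly where you flagged a potential obstacle: the claim that ``absence of maximum follows because a maximum of the intersection would have to be a maximum of some $F(\eps_\frak p(x))$'' is false, and in the paper's setting the raw intersection really can acquire a maximum. Concretely, in the commutative case take $b\models q$ and put $I=\pi_\frak p(D_\frak p(b))$, an element of $\mathcal D(\Lin_A(\frak p))$ by the $\frak p$-analogue of Lemma \ref{Lun_D_no_maximum}. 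Commutativity gives $a\in D_\frak p(b)\Leftrightarrow b\in D_\frak q(a)$, and Lemma \ref{LDworsas}(i) gives $D_\frak p(c)\subsetneq D_\frak p(b)$ whenever $(b,c)\models(\frak q^2)_{\strok A}$. Together with downward-closedness of $D_\frak q(a)$ (Lemma \ref{Ld<i}(ii)) this yields
$$\bigcap_{\eps_\frak p(a)\in I}F(\eps_\frak p(a))=\{\eps_\frak q(y)\mid\eps_\frak q(y)\leq_\frak q\eps_\frak q(b)\},$$
which has maximum $\eps_\frak q(b)$ and hence is not in $\mathcal D(\mathbb L_A(\frak q))$. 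No single $F(\eps_\frak p(a))$ contributes this maximum; it appears in the limit, which is exactly the case your argument does not cover.

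The fix is to interpret $\mathcal D^*(F)(I)$ as the infimum of $\{F(\eps_\frak p(x))\mid\eps_\frak p(x)\in I\}$ in the complete lattice $\mathcal D(\mathbb L_A(\frak q))$, i.e.\ the raw intersection with its maximum deleted when one exists, and to run your surjectivity argument with the same correction (your candidate $I=\{\eps_\frak p(x)\mid J\subsetneq F(\eps_\frak p(x))\}$ then also needs the adjustment, since for $J=\{\eps_\frak q(y)\mid\eps_\frak q(y)<_\frak q\eps_\frak q(b)\}$ the raw intersection is $J\cup\{\eps_\frak q(b)\}$). To be fair, the paper's own ``purely combinatorial fact'' is stated with exactly the same formula and therefore shares this imprecision; the conclusion of the Proposition is unaffected, since the corrected map is a genuine order anti-isomorphism. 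But as written your well-definedness argument for $\mathcal D^*(F)$ does not go through, and the justification you offer (passing the maximum down to some $F(\eps_\frak p(x))$) is not a valid step.
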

 
Now we turn to the local case: let $M\supseteq A$ be a small model. For $a\in p(M)$ define:
\begin{center}
 $\eps_\frak p^M(a)=\eps_\frak p(a)\cap p(M)$ \ \ and  \ \   \ $D_\frak q^M(a)=D_\frak q(a)\cap q(M)$.
\end{center}
 
\begin{lem}\label{Lunbdd_local}  Let  \ $\varphi(x,x'):=\exists z(\neg(\theta(x,z)\Leftrightarrow\theta(x',z))\wedge\phi_\frak q(z) )\,$, where $a\in p(M)$ and $\theta(a,y)$ relatively defines $D_\frak q(a)$. Assume  that  \
$|\Lin_{\frak p}(M)|\geq 2$.
\begin{enumerate}[(i)]
\item For $a,a'\in p(M)$ the following conditions are all equivalent:
\begin{center}
\begin{tabular}{lll}
 (1) \   $\models \varphi(a,a')$; &
 (2) \   $D_\frak q(a)\neq D_\frak q(a')$; &
 (3) \   $\eps_\frak p(a)\neq \eps_\frak p(a')$;\\
 (4) \   $D_\frak q^M(a)\neq D_\frak q^M(a')$; &
 (5) \   $\eps_\frak p^M(a)\neq \eps_\frak p^M(a')$; &
 (6) \   $\pi_\frak q(D_\frak q^M(a))\neq \pi_\frak q(D_\frak q^M(a')$).
 \end{tabular}
\end{center}

\item If $a\in p(M)$ and $\eps_\frak p^M(a)$ is neither minimal nor maximal in $\Lin_{\frak p}(M)$ then $D_\frak q^M(a)\neq\emptyset$ and $\pi_q(D_\frak q^M(a))\in \mathcal D(\Lin_\frak q(M))$.
\end{enumerate}
\end{lem}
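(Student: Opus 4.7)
The plan is to close the diagram in part (i) around the chain $(1) \Leftrightarrow (2) \Leftrightarrow (3)$ already established in Lemma \ref{L_bdd_vdashq}, and then to use (i) as a lever for (ii). The implications $(4) \Rightarrow (2)$, $(5) \Rightarrow (3)$, and $(6) \Rightarrow (4)$ are immediate from the definitions. For $(4) \Rightarrow (6)$ I will use the $\eps_\frak q$-closure of $D_\frak q(-)$ (Lemma \ref{LDscl_closed}): any $b \in D_\frak q^M(a) \smallsetminus D_\frak q^M(a')$ has its entire $\eps_\frak q$-class sitting in $D_\frak q(a) \smallsetminus D_\frak q(a')$, so the class of $b$ separates the two projections in $\Lin_\frak q(M)$. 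The only substantive step is $(3) \Rightarrow (5)$: assuming $\eps_\frak p(a) \neq \eps_\frak p(a')$, the pair $\{a, a'\}$ orders into a Morley sequence in $\frak p$ over $A$ and Lemma \ref{LDworsas} gives a strict containment between $D_\frak q(a)$ and $D_\frak q(a')$. The formula $\phi_\frak q(y) \wedge \neg(\theta(a, y) \Leftrightarrow \theta(a', y))$ is then consistent over $Aaa' \subseteq M$, so $M$ realizes it; Lemma \ref{L_bdd_vdashq}(i) forces any such realization into $q$, producing $b \in q(M)$ in the symmetric difference of $D_\frak q^M(a)$ and $D_\frak q^M(a')$.

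For part (ii) I will use the non-minimality (resp.\ non-maximality) of $\eps_\frak p^M(a)$ in $\Lin_\frak p(M)$ to pick $a_0, a'' \in p(M)$ so that $(a_0, a)$ and $(a, a'')$ are Morley sequences in $\frak p$ over $A$. Depending on whether $\frak p$ and $\frak q$ commute, Lemma \ref{LDworsas} gives either $D_\frak q(a_0) \subsetneq D_\frak q(a) \subsetneq D_\frak q(a'')$ or the reverse chain; in either situation I can designate $a^\flat \in \{a_0, a''\}$ with $D_\frak q(a^\flat) \subsetneq D_\frak q(a)$. Part (i) then upgrades this to $D_\frak q^M(a^\flat) \subsetneq D_\frak q^M(a)$, whence $D_\frak q^M(a) \neq \emptyset$. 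That $\pi_\frak q(D_\frak q^M(a))$ is an initial segment of $\Lin_\frak q(M)$ follows directly from the downward closure of $D_\frak q(a)$ in $q(\Mon)$ (Lemma \ref{Ld<i}(ii)) together with Corollary \ref{cor_order_and_morley}.

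The hard part will be showing that $\pi_\frak q(D_\frak q^M(a))$ has no maximum. Assume for contradiction that $\eps_\frak q(b) \cap q(M)$ with $b \in D_\frak q^M(a)$ is maximal. Since $\pi_\frak q(D_\frak q(a))$ has no maximum in $\Lin_A(\frak q)$ (Lemma \ref{Lun_D_no_maximum}) and $\pi_\frak q(D_\frak q(a^\flat))$ is a proper downward-closed subset of it, a short case-split (on whether $\eps_\frak q(b)$ itself lies in $\pi_\frak q(D_\frak q(a^\flat))$) yields some $b^* \in D_\frak q(a) \smallsetminus D_\frak q(a^\flat)$ with $\eps_\frak q(b^*) >_\frak q \eps_\frak q(b)$. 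The formula
\[
\phi_\frak q(y) \wedge \neg(\theta(a, y) \Leftrightarrow \theta(a^\flat, y)) \wedge b <_\frak q y \wedge \neg \sigma_\frak q(y, b)
\]
is then consistent over $Aaa^\flat b \subseteq M$, its realizations must lie in $q$ by Lemma \ref{L_bdd_vdashq}(i), and the choice of $a^\flat$ makes the symmetric difference of $D_\frak q(a)$ and $D_\frak q(a^\flat)$ equal to $D_\frak q(a) \smallsetminus D_\frak q(a^\flat) \subseteq D_\frak q(a)$. Thus $M$ contains some $b^{**} \in D_\frak q^M(a)$ with $\eps_\frak q(b^{**}) >_\frak q \eps_\frak q(b)$, contradicting maximality. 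The recurring obstacle throughout is pulling witnesses back to $M$ while keeping them inside $q$; the symmetric-difference trick from Lemma \ref{L_bdd_vdashq}(i) (which rests on strong regularity via $\phi_\frak q$) is precisely what makes this possible.
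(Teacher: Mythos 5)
Your proposal follows essentially the same route as the paper's proof, and the mathematics is sound, but there is a labeling slip in part (i) that leaves a small hole in the logical circle.

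The argument you present under the heading ``$(3)\Rightarrow(5)$'' in fact proves $(3)\Rightarrow(4)$: it produces a witness $b\in q(M)$ in the symmetric difference of $D_\frak q^M(a)$ and $D_\frak q^M(a')$, which shows $D_\frak q^M(a)\neq D_\frak q^M(a')$, not $\eps_\frak p^M(a)\neq\eps_\frak p^M(a')$. With your list of implications --- $(1)\Leftrightarrow(2)\Leftrightarrow(3)$ from Lemma~\ref{L_bdd_vdashq}, the trivial $(4)\Rightarrow(2)$, $(5)\Rightarrow(3)$, $(6)\Rightarrow(4)$, plus $(4)\Rightarrow(6)$ and the (correctly proved, mislabeled) $(3)\Rightarrow(4)$ --- you connect $(1),(2),(3),(4),(6)$ into one equivalence class but never prove anything implies $(5)$. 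What is missing is the genuinely trivial $(3)\Rightarrow(5)$: distinct $\eps_\frak p$-classes are disjoint, and $\eps_\frak p^M(a)$, $\eps_\frak p^M(a')$ are nonempty (each contains $a$, $a'$ respectively), so they are distinct. The paper notes this in one line. Your ``substantive step'' is fine as an argument, and it is indeed the same technique the paper uses (the paper proves $(1)\Rightarrow(6)$ directly, still by producing a witness in $M$, forcing it into $q$ via Lemma~\ref{L_bdd_vdashq}(i), and then using $\eps_\frak q$-closure of $D_\frak q(-)$); you merely landed on $(3)\Rightarrow(4)$ followed by $(4)\Rightarrow(6)$, which amounts to the same calculation split in two.

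Part (ii) is correct and uses the same ingredients as the paper --- the interleaved $\eps_\frak p$-classes, Lemma~\ref{LDworsas}, strong regularity via $\phi_\frak q$, Lemma~\ref{L_bdd_vdashq}(i), and Lemma~\ref{Lun_D_no_maximum} --- but your ``no maximum'' step runs in the opposite direction to the paper's. The paper works top-down: it writes a universal formula $\psi(a,a_0,c)$ asserting that $\eps_\frak q(c)$ is a bound, observes it holds in $M$, transfers it to $\Mon$ by elementarity, and contradicts Lemma~\ref{Lun_D_no_maximum}. You work bottom-up: you invoke Lemma~\ref{Lun_D_no_maximum} in $\Mon$ to get $b^*$ above $\eps_\frak q(b)$, then exhibit a consistent formula over $Aaa^\flat b\subseteq M$ whose realization in $M$ contradicts maximality. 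Both work; yours requires the explicit use of $\sigma_\frak q(y,b)$ (available since $\frak q$ is simple over $A$ by Lemma~\ref{L_bdd_vdashq}(iii)), while the paper stays with the set $D_\frak q^M(a)\smallsetminus D_\frak q^M(a_0)$ defined via $\theta$. Also, the case split you mention on whether $\eps_\frak q(b)$ lies in $\pi_\frak q(D_\frak q(a^\flat))$ is vacuous: since $\eps_\frak q(b)\cap M$ is assumed maximal in $\pi_\frak q(D_\frak q^M(a))$ and $\pi_\frak q(D_\frak q^M(a^\flat))$ is a proper initial segment of it, $b$ cannot lie in $D_\frak q(a^\flat)$. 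This does no harm, only adds a redundant case.
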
 

\begin{proof}
(i) Let $a,a'\in p(M)$. Equivalences (1)$\Leftrightarrow$(2)$\Leftrightarrow$(3) follow from   Lemma \ref{L_bdd_vdashq}.  (5)$\Rightarrow$(3),  (4)$\Rightarrow$(2) and (6)$\Rightarrow$(4) are obvious.  (3)$\Rightarrow$(5) is easy: if (3) holds then $\eps_\frak p(a)$ and $\eps_\frak p(a')$ are disjoint; since $\eps_\frak p^M(a)$ and $\eps_\frak p^M(a')$ are non-empty  we conclude that they are disjoint, so (5) holds. 

Finally, it suffices to prove (1)$\Rightarrow$(6). Assume $\models\varphi(a,a')$, i.e. $\models \exists z(\neg(\theta(a,z)\Leftrightarrow\theta(a',z))\wedge\phi_\frak q(z))$. Choose $b\in M$ witnessing the existential quantifier in this formula. By Lemma \ref{L_bdd_vdashq} $b$ realizes $q$  so $b$ witnesses $D_\frak q(a)\neq D_\frak q(a')$. Since $D_\frak q(-)$ is $\eps_\frak q$-closed (Lemma \ref{LDscl_closed}),   $\eps_\frak q(b)$ witnesses $\pi_\frak q(D_\frak q(a))\neq \pi_\frak q(D_\frak q(a'))$. Finally,   $\pi_\frak q(D_\frak q^M(a))= \pi_\frak q(D_\frak q(a))\cap \Lin_\frak q(M)$ and $\eps_\frak q^M(b)\in\Lin_\frak q(M)$ (because $b\in q(M)$), imply that $\eps_\frak q^M(b)$ witnesses $\pi_\frak q(D_\frak q^M(a))\neq \pi_\frak q(D_\frak q^M(a')$.

(ii) Let $a\in p(M)$ be such that $\eps_\frak p^M(a)$ is neither minimal nor maximal in $\Lin_\frak p(M)$. Choose $a',a''\in p(M)$ such that $\eps_\frak p(a')<_\frak p\eps_\frak p(a)<_\frak p\eps_\frak p(a'')$. Then, by (i), $D_\frak q^M(a)\neq D_\frak q^M(a')$ and $D_\frak q^M(a)\neq D_\frak q^M(a'')$.
By Lemma \ref{LDworsas}  $D_\frak q(a)$ is strictly contained between $D_\frak q(a')$ and $D_\frak q(a'')$, so $D_\frak q^M(a)$ is contained between $D_\frak q^M(a')$ and $D_\frak q^M(a'')$. In particular, $D_\frak q^M(a)\neq \emptyset$. Thus 
$\pi_\frak q(D_\frak q^M(a))$ is   a proper  initial segment of $\Lin_\frak q(M)$. To complete the proof of the lemma it remains to show that  it has no maximum.

Suppose, on the contrary, that $\eps_\frak q(c)$ is the maximum and work for a contradiction. Let $a_0\in\{a',a''\}$ be such that   $D_\frak q^M(a_0)\subsetneq D_\frak q^M(a)$. Then $D_\frak q^M(a)\smallsetminus D_\frak q^M(a_0)$ is definable by $\theta(a,t)\wedge \neg\theta(a_0,t)\wedge\phi_\frak q(t)$. Let $\psi(a,a_0,c)$ be the formula saying that \begin{center}
`for all  $t\in D_\frak q^M(a)\smallsetminus D_\frak q^M(a_0)$ if  $t\notin\eps_\frak q(c)$ then  $t<_\frak q c$'\,. 
\end{center}  
Then $\models\psi(a,a_0,c)$ holds in $M$ and so it holds in $\Mon$, too. But then $\eps_\frak q(c)$ would be the maximum of $D_\frak q(a)$. A contradiction.
 \end{proof}

By  the lemma, whenever $|\Lin_\frak p(M)|\geq 2$, we can  localize $F$:
\begin{center}
$F_M(\eps_\frak p^M(a))=\pi_q(D_\frak q^M(a))$  \ defines \ $F_M:\Lin_\frak p(M)\longrightarrow  \mathcal D(\Lin_\frak q(M))$
\end{center}
Next we prove that the three conditions which guarantee extensibility of $F_M$ to an  isomorphism (or an anti-isomorphism) of $\mathcal D(\Lin_\frak p(M))$ and $\mathcal D(\Lin_\frak q(M))$ are satisfied.

\begin{lem} Assume that $|\Lin_{A,\frak p}(M)|\geq 2$.  
\begin{enumerate}[(i)]
\item $\mathbb L_\frak p(M)$ and $\mathbb L_\frak q(M)$ are dense linear orders;  
\item $F_M:\Lin_\frak p(M)\longrightarrow  \mathcal D(\Lin_\frak q(M))$ \ is   strictly monotone. Moreover:
	\begin{enumerate}
	\item If $(\frak p\otimes\frak q)_{\strok A}\neq(\frak q\otimes\frak p)_{\strok A}$ then $F_M$ is increasing;
	\item If $(\frak p\otimes\frak q)_{\strok A}=(\frak q\otimes\frak p)_{\strok A}$ then $F_M$ is decreasing;
	\end{enumerate}
\item $F_M[\Lin_\frak p(M)]$ is a dense subset of $\mathcal D(\Lin_\frak q(M))$.
\end{enumerate}
\end{lem}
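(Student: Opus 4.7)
The plan is to treat the three parts sequentially, relying on the fact that by Lemma~\ref{L_bdd_vdashq}(iii) both $\frak p$ and $\frak q$ are simple over $A$ and on the uniform relative defining formula $\theta$ from Lemma~\ref{L_bdd_rel_def}. For~(i), I apply Proposition~\ref{P_conv_simple_invariants} to $\frak p$: simplicity, convexity and the hypothesis $|\Lin_{A,\frak p}(M)|\geq 2$ yield that $\mathbb L_\frak p(M)$ is a dense linear order, and hence infinite. To transfer density to $\frak q$, I pick $a\in p(M)$ whose $\eps_\frak p^M$-class is neither minimal nor maximal in $\Lin_\frak p(M)$ and invoke Lemma~\ref{Lunbdd_local}(ii) to see that $\pi_\frak q(D_\frak q^M(a))$ is a nonempty initial segment of $\Lin_\frak q(M)$ with no maximum; this forces $|\Lin_{A,\frak q}(M)|\geq 2$, and Proposition~\ref{P_conv_simple_invariants} applies again to give density of $\mathbb L_\frak q(M)$.

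For~(ii), take $a,a'\in p(M)$ with $\eps_\frak p^M(a)<_\frak p\eps_\frak p^M(a')$, so $(a,a')$ is a Morley sequence in $\frak p$ over $A$. Applying Lemma~\ref{LDworsas} with the roles of $\frak p$ and $\frak q$ interchanged (legitimate as both are regular, invariant and $A$-asymmetric) gives $D_\frak q(a)\subsetneq D_\frak q(a')$ in the noncommutative case and $D_\frak q(a')\subsetneq D_\frak q(a)$ in the commutative case, and intersecting with $q(M)$ transfers the containment to $D_\frak q^M(a),D_\frak q^M(a')$. Properness at the $M$-level is immediate from Lemma~\ref{Lunbdd_local}(i), which ensures $\pi_\frak q(D_\frak q^M(a))\neq\pi_\frak q(D_\frak q^M(a'))$ whenever the two $\eps_\frak p^M$-classes differ. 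Hence $F_M$ is strictly increasing in the noncommutative case and strictly decreasing in the commutative one.

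Part~(iii) is the crux. Given $I\subsetneq J$ in $\mathcal D(\Lin_\frak q(M))$, I pick $\eps^*\in J\smallsetminus I$ and, since $J$ has no maximum, $\eps^{**}\in J$ with $\eps^*<_\frak q\eps^{**}$; any representatives $b_1,b_2\in q(M)$ of $\eps^*,\eps^{**}$ then form a Morley pair in $\frak q$ over $A$. The strategy is to work with the formula over $M$
\[
\sigma(x)\,:=\,\phi_\frak p(x)\wedge\theta(x,b_1)\wedge\neg\theta(x,b_2),
\]
show that it is both consistent and forces $p(x)$, and then extract $a\in p(M)$ realizing $\sigma$. By construction, such $a$ will satisfy $b_1\in D_\frak q(a)$ and $b_2\in I_\frak q(a)$, and since $F_M(\eps_\frak p^M(a))$, $I$ and $J$ are all initial segments of $\Lin_\frak q(M)$, this will give $I\subsetneq F_M(\eps_\frak p^M(a))\subsetneq J$. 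Consistency of $\sigma$ follows from the constructive part of Lemma~\ref{LDworsas}'s proof: take $a^*$ with $(b_1,a^*,b_2)\models(\frak q\otimes\frak p\otimes\frak q)_{\strok A}$ in the noncommutative case, or an analogous $a^*\in D_\frak p(b_1)\cap I_\frak p(b_2)$ in the commutative case (adjusting $b_1,b_2$ to the given pair by homogeneity); in both cases $b_1\in D_\frak q(a^*)$ and $b_2\in I_\frak q(a^*)$, so $a^*\models\sigma$.

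The main obstacle is the implication $\sigma(x)\vdash p(x)$: Lemma~\ref{L_bdd_rel_def} only guarantees that $\theta(a,\cdot)$ relatively defines $D_\frak q(a)$ for $a\models p$, so on $\phi_\frak p(\Mon)\smallsetminus p(\Mon)$ the formula $\theta(a,\cdot)$ is a priori uncontrolled and one cannot conclude $a\models p$ from $\sigma(a)$ directly. The resolution, patterned on Lemma~\ref{L_bdd_vdashq}(i), routes through non-orthogonality: if $a\models\sigma$ with $s=\tp(a/A)$, then $\models\theta(a,b_1)\wedge\neg\theta(a,b_2)$ distinguishes $\tp(ab_1/A)$ from $\tp(ab_2/A)$, forcing $s\nwor q$; Proposition~\ref{prop_nwor_transitivity} applied with $q$ in the middle yields $s\nwor p$, and strong regularity of $(\frak p,\phi_\frak p)$ then rules out $p\vdash\frak p_{\strok Aa}$, so $s=p$. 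Once $\sigma\vdash p$ is established, any realization of $\sigma$ inside the model $M$ lies in $p(M)$, completing the density argument.
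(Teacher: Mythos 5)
Your proof is correct and follows essentially the same route as the paper: part (i) via Proposition~\ref{P_conv_simple_invariants} together with Lemma~\ref{Lunbdd_local}(ii); part (ii) via Lemma~\ref{LDworsas} and the injectivity supplied by Lemma~\ref{Lunbdd_local}(i); and part (iii) by exhibiting a formula over $M$, forcing $p$ through the $\nwor$-transitivity and strong-regularity argument of Lemma~\ref{L_bdd_vdashq}(i). The one cosmetic difference is that you use $\theta(x,b_1)\wedge\neg\theta(x,b_2)$ where the paper uses $\neg(\theta(x,b)\Leftrightarrow\theta(x,b'))$; these agree on $p(\Mon)$ because $D_\frak q(a)$ is downward closed, and your version simply makes the consistency check (split by commutativity via Lemma~\ref{LDworsas}) a bit more explicit than the paper's.
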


\begin{proof}
(i) $\mathbb L_\frak p(M)$ is a dense linear order by Proposition \ref{P_conv_simple_invariants}, so   $\Lin_\frak p(M)$ is infinite. By part (ii) of the previous lemma $F_M(a)$ does not have maximum; in particular $\pi_\frak q(D_\frak q^M(a))$ is infinite. Therefore $\Lin_\frak q(M)$ is infinite and, by Proposition \ref{P_conv_simple_invariants}, $\mathbb L_\frak q(M)$ is also a dense linear order.

\smallskip(ii) The equivalence of (3) and (6) in part (i) of the previous lemma implies that  $F_M$ is injective.  The conclusion follows from Lemma \ref{LDworsas}.

\smallskip(iii) Assume that $I\subsetneq J$ are elements of $\mathcal D(\Lin_\frak q(M))$. It suffices to find $a$ realizing $p$ such that $I\subsetneq D_\frak q^M(a)\subsetneq J$. Since $J$ has no maximum  there are $b,b'\in q(M)$ such that $I<_\frak q\eps_\frak q^M(b)<_\frak q\eps_\frak q^M(b')$  and $\eps_\frak q^M(b),\eps_\frak q^M(b')\in J$. We will show that there exists  $a$ realizing $p$ such that $b'\in I_\frak q^M(a)$ and $b\in D_\frak q^M(a)$;  since $I,J$ and $D_\frak q^M(a)$ are all convex,   $I\subsetneq D_\frak q^M(a)\subsetneq J$ would follow.  

Let $\theta(x,y)$ be over $A$ such that $\theta(a,y)$ relatively defines $D_\frak q^M(a)$ for some (any) $a$ realizing $p$. Then  $\models\exists x\,(\neg(\theta(x,b)\Leftrightarrow\theta(x,b'))\wedge\phi_\frak p(x))$ holds because $(b,b')$ is a Morley sequence in $\frak q$ over $A$. Let $a\in M$ witness the existential quantifier. We argue as in the proof of Lemma \ref{L_bdd_vdashq}(i):    
Let $r=\tp(b/A)$. Then  $\models  \neg(\theta(a,b)\Leftrightarrow\theta(a,b'))$ implies $r\nwor q$. $r\nwor q$ and $p\nwor q$, by Proposition \ref{prop_nwor_transitivity}, imply $r\nwor p$. Since $\phi_\frak p(x)$ belongs to $r$  and  witnesses  strong regularity of $\frak p$, we deduce $r=p$. Therefore $a$ realizes $p$. This completes the proof of the lemma. \end{proof}

As an immediate corollary of the lemma we deduce.  

\begin{prop}\label{P2} Assume that $|\Lin_{A,\frak p}(M)|\geq 2$.
\begin{enumerate}[(i)]
\item If $(\frak p\otimes\frak q)_{\strok A}\neq(\frak q\otimes\frak p)_{\strok A}$ then $\mathcal D(F_M)$ is an isomorphism of $\mathcal D(\mathbb L_{A,\frak p}(M))$ and $\mathcal D(\mathbb L_{A,\frak q}(M))$.

\item If $(\frak p\otimes\frak q)_{\strok A}=(\frak q\otimes\frak p)_{\strok A}$ then $\mathcal D^*(F_M)$ is an isomorphism of $\mathcal D(\mathbb L_{A,\frak p}(M))$ and $\mathcal D(\mathbb L^*_{A,\frak q}(M))$. 
\end{enumerate}
\end{prop}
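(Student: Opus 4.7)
The plan is to obtain Proposition~\ref{P2} as the purely combinatorial consequence of the preceding lemma, in exactly the same way Proposition~\ref{prop_unbounded_iso_monster} was deduced from its own predecessor. That preceding lemma already supplies the three hypotheses of the extension principle invoked just after Lemma~\ref{LdenseF}: $\mathbb L_{A,\frak p}(M)$ and $\mathbb L_{A,\frak q}(M)$ are dense linear orders, $F_M:\Lin_{A,\frak p}(M)\longrightarrow\mathcal D(\Lin_{A,\frak q}(M))$ is strictly monotone, and its image is dense in $\mathcal D(\Lin_{A,\frak q}(M))$. So no further model-theoretic input is required; everything that remains is the combinatorial transport.

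First I would simply quote the combinatorial fact: such a monotone $F_M$ extends to an order isomorphism $\mathcal D(F_M):\mathcal D(\mathbb L_{A,\frak p}(M))\to\mathcal D(\mathbb L_{A,\frak q}(M))$ via $I\mapsto \bigcup_{x\in I}F_M(x)$ when $F_M$ is increasing, and to an order anti-isomorphism $\mathcal D^*(F_M):\mathcal D(\mathbb L_{A,\frak p}(M))\to\mathcal D(\mathbb L_{A,\frak q}(M))$ via $I\mapsto \bigcap_{x\in I}F_M(x)$ when $F_M$ is decreasing. Then I would split cases using clause~(ii) of the preceding lemma. In case $(\frak p\otimes\frak q)_{\strok A}\neq(\frak q\otimes\frak p)_{\strok A}$ the map $F_M$ is increasing, so $\mathcal D(F_M)$ is the isomorphism witnessing part~(i). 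In case $(\frak p\otimes\frak q)_{\strok A}=(\frak q\otimes\frak p)_{\strok A}$ the map $F_M$ is decreasing, so $\mathcal D^*(F_M)$ is an anti-isomorphism of $\mathcal D(\mathbb L_{A,\frak p}(M))$ with $\mathcal D(\mathbb L_{A,\frak q}(M))$; equivalently, an isomorphism of $\mathcal D(\mathbb L_{A,\frak p}(M))$ with $\mathcal D(\mathbb L^*_{A,\frak q}(M))$, which is part~(ii).

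I do not foresee a genuine obstacle: all content has already been concentrated in the preceding lemma and in the combinatorial extension principle. The one point worth a sentence of explicit verification before closing is that $\mathcal D(F_M)(I)$ and $\mathcal D^*(F_M)(I)$ really lie in $\mathcal D(\Lin_{A,\frak q}(M))$ rather than merely in the power set; but a union (respectively, intersection) of initial segments without maxima in $\Lin_{A,\frak q}(M)$ is again an initial segment of $\Lin_{A,\frak q}(M)$ without a maximum, and the absence-of-maximum property for each $F_M(\eps_\frak p^M(a))$ is exactly Lemma~\ref{Lunbdd_local}(ii), applicable thanks to the standing hypothesis $|\Lin_{A,\frak p}(M)|\geq 2$. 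With that check made, no other step requires attention.
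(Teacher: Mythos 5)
Your proof is correct and follows the paper exactly: the paper also deduces Proposition~\ref{P2} as an immediate corollary of the preceding lemma by invoking the combinatorial extension principle stated after Lemma~\ref{LdenseF}, splitting into the increasing and decreasing cases according to commutativity. The small extra verification you add (that $\mathcal D(F_M)(I)$ and $\mathcal D^*(F_M)(I)$ land in $\mathcal D(\Lin_{A,\frak q}(M))$) is a reasonable explicitness that the paper absorbs into the unstated combinatorial fact.
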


\noindent{\em Proof of Theorem \ref{T_nor_uvod}.} \ Part (i)  of the theorem, related to the bounded case, is completely proved in Proposition \ref{P1}. As for the unbounded case, in Proposition \ref{P2} we proved the claim of part (ii) for the case when $\Lin_{A,\frak p}(M)$ has at least two elements; similarly the claim holds if   $\Lin_{A,\frak q}(M)$ has at least two elements. The remaining case is when both of them have at most one element. But then  their Dedekind completions are one-element orders, and so are isomorphic (recall the convention that $\mathcal D(\emptyset)=\{\emptyset\}$). This finishes the proof of the theorem.\qed

\end{document}